\begin{document}

\def\sect{\section}

\newtheorem{thm}{Theorem}[section]
\newtheorem{cor}[thm]{Corollary}
\newtheorem{lem}[thm]{Lemma}
\newtheorem{prop}[thm]{Proposition}
\newtheorem{propconstr}[thm]{Proposition-Construction}
\newtheorem{pro}[thm]{Proposition}

\theoremstyle{definition}
\newtheorem{para}[thm]{}
\newtheorem{ax}[thm]{Axiom}
\newtheorem{conj}[thm]{Conjecture}
\newtheorem{defn}[thm]{Definition}
\newtheorem{notation}[thm]{Notation}
\newtheorem{rem}[thm]{Remark}
\newtheorem{remark}[thm]{Remark}
\newtheorem{question}[thm]{Question}
\newtheorem{example}[thm]{Example}
\newtheorem{problem}[thm]{Problem}
\newtheorem{excercise}[thm]{Exercise}
\newtheorem{ex}[thm]{Exercise}

\newcommand{\Z}{\mathbb{Z}}
\newcommand{\Q}{\mathbb{Q}}
\newcommand{\N}{\mathbb{N}}

\def\mL{{\mathcal L}}
\def\C{{\mathcal C}}
\def\cF{{\mathcal F}}

\overfullrule=0pt

\def\si{\sigma}
\def\prf{\smallskip\noindent{\it         Proof}. }
\def\call{{\mathcal L}}
\def\nat{{\mathbb  N}}
\def\la{\langle}
\def\ra{\rangle}
\def\inv{^{-1}}
\def\ld{{\rm    ld}}
\def\trdeg{{tr.deg}}
\def\dim{{\rm   dim}}
\def\th{{\rm    Th}}
\def\rest{{\lower       .25     em      \hbox{$\vert$}}}
\def\ch{{\rm    char}}
\def\zee{{\mathbb  Z}}
\def\conc{^\frown}
\def\acl{acl_\si}
\def\cls{cl_\si}
\def\cals{{\cal S}}
\def\mult{{\rm  Mult}}
\def\calv{{\mathcal V}}
\def\aut{{\rm   Aut}}
\def\ffi{{\mathbb  F}}
\def\F{{\mathbb  F}}
\def\ffiti{\tilde{\mathbb          F}}
\def\degs{deg_\si}
\def\calx{{\mathcal X}}
\def\gal{{\mathcal G}al}
\def\cl{{\rm cl}}
\def\loc{{\rm locus}}
\def\calg{{\mathcal G}}
\def\H{{\mathcal H}}
\def\calq{{\mathcal Q}}
\def\calr{{\mathcal R}}
\def\caly{{\mathcal Y}}
\def\aff{{\mathbb A}}
\def\cali{{\cal I}}
\def\calu{{\cal U}}
\def\epsilon{\varepsilon} 
\def\U{{\mathcal U}}
\def\V{{\mathcal V}}
\def\rat{{\mathbb Q}}
\def\ga{{\mathbb G}_a}
\def\gm{{\mathbb G}_m}
\def\cee{{\mathbb C}}
\def\ree{{\mathbb R}}
\def\frob{{\rm Frob}}
\def\Frob{{\rm Frob}}
\def\fix{{\rm Fix}}
\def\Uu{{\mathcal U}}
\def\Kk{{\mathcal K}}
\def\proj{{\mathbb P}}
\def\sym{{\rm Sym}}
 
\def\dcl{{\rm dcl}}
\def\calm{{\mathcal M}}

\font\helpp=cmsy5
\def\semdp
{\hbox{$\times\kern-.23em\lower-.1em\hbox{\helpp\char'152}$}\,}

\def\dnfo{\,\raise.2em\hbox{$\,\mathrel|\kern-.9em\lower.35em\hbox{$\smile$}
$}}
\def\dnf#1{\lower1em\hbox{$\buildrel\dnfo\over{\scriptstyle #1}$}}
\def\dfo{\;\raise.2em\hbox{$\mathrel|\kern-.9em\lower.35em\hbox{$\smile$}
\kern-.7em\hbox{\char'57}$}\;}
\def\df#1{\lower1em\hbox{$\buildrel\dfo\over{\scriptstyle #1}$}}        
\def\stab{{\rm Stab}}
\def\qfcb{\hbox{qf-Cb}}
\def\perf{^{\rm perf}}
\def\sipm{\si^{\pm 1}}
\newcommand{\fgc}{\ffi_p[[G/C]]}
\newcommand{\fgone}{\ffi_p[[G/G_1]]}
\newcommand{\fgtwo}{\ffi_p[[G/G_2]]}
\newcommand{\fhc}{\ffi_p[[H/C]]}
\newcommand{\fhone}{\ffi_p[[H/H_1]]}
\newcommand{\fhtwo}{\ffi_p[[H/H_2]]}
\newcommand{\kfgc}{\ffi_p[[K\backslash G/C]]}
\newcommand{\kfgone}{\ffi_p[[K\backslash G/G_1]]}
\newcommand{\kfgtwo}{\ffi_p[[K\backslash G/G_2]]}
\newcommand{\kfhc}{\ffi_p[[K\cap H\backslash H/C]]}
\newcommand{\kfhone}{\ffi_p[[K\cap H\backslash H/H_1]]}
\newcommand{\kfhtwo}{\ffi_p[[K\cap H\backslash H/H_2]]}
\newcommand{\PZed}[1]{\textcolor{blue}{#1}}
\newcommand{\ZCed}[1]{\textcolor{green}{#1}}
\newcommand{\red}[1]{\textcolor{red}{#1}}
\newcommand{\G} {\mathcal G}
\newcommand{\cupdot}{\mathbin{\mathaccent\cdot\cup}}
\newcommand{\bigcupdot}{\hspace{6pt}\cdot \hspace{-10pt}\bigcup}
\def\vlabel{\label}

\title{Pro-p groups acting on trees with finitely many maximal vertex stabilizers up to conjugation }

 \author{Zo\'e Chatzidakis and Pavel Zalesskii}

\date{}
\maketitle

\begin{abstract}  We prove that a finitely generated pro-$p$ group $G$ acting on a pro-$p$ tree $T$ splits as a free amalgamated pro-$p$ product or a pro-$p$ HNN-extension over an edge stabilizer. If $G$ acts with finitely many vertex stabilizers up to conjugation we show that it is the fundamental pro-$p$ group of a finite graph of pro-$p$ groups $(\G,\Gamma)$ with edge and vertex groups being stabilizers of certain vertices and edges of $T$ respectively. If edge stabilizers are procyclic, we give a bound on $\Gamma$ in terms of the minimal number of generators of $G$. We also give a criterion for a pro-$p$ group $G$ to be accessible in terms of the first cohomology $H^1(G,\F_p[[G]])$.

\end{abstract}

\section{Introduction}

The dramatic advance  of  classical combinatorial group theory happened
in the 1970's, when the  Bass-Serre theory of groups acting on trees
changed completely the face of the theory. 

The profinite version of Bass-Serre  theory was developed by Luis Ribes,
Oleg Melnikov and the second author because of the absence of the classical
methods of combinatorial group theory for profinite groups. However it
does not work in full strength even in the pro-$p$ case. The reason is
that if a pro-$p$ group $G$ acts on a pro-$p$ tree $T$ then  a maximal subtree of the quotient graph $G\backslash T$ does not always exist and even if it exists it does not always lift to $T$. As a consequence the pro-$p$ version of  Bass-Serre theory
does not give subgroup structure theorems the way it does in the
classical Bass-Serre theory. In fact,  for infinitely generated pro-$p$ subgroups there are counter examples.

The objective of this paper is to study the situation when $G$ has only
finitely many vertex stabilizers up to conjugation and in this case we
can prove the main Bass-Serre theory structure theorem.

\bigskip\noindent
{\bf Theorem \ref{General}.} {\em  
Let $G$ be a finitely generated  pro-$p$ group acting
on a pro-$p$ tree $T$ with finitely many maximal vertex stabilisers up to conjugation.
 Then  $G$ is  the fundamental group of a reduced finite graph
of finitely generated pro-$p$ groups $(\G,\Gamma)$, where  each vertex
group $\G(v)$ and each edge group $\G(e)$ is  a maximal vertex stabilizer $G_{\tilde v}$ and  an edge stabilizer
$G_{\tilde e}$ respectively (for some $\tilde v,\tilde e\in T$). }

\medskip
In the abstract situation a finitely generated (abstract) group $G$ acting on a tree has a  $G$-invariant subtree $D$ such that $G\backslash D$ is finite and so 
has automatically finitely many maximal vertex stabilizers up to
conjugation. In the pro-$p$ situation such an invariant subtree does not exist in general and the existence of it in the case of only finitely many stabilizers up to conjugation is not clear even if vertex stabilizers are finite. Nevertheless, for a finitely generated pro-$p$ group acting on a pro-$p$ tree we can prove a splitting theorem into an amalgamated product or an HNN-extension. 

\bigskip\noindent
{\bf Theorem \ref{splitting}.}\ {\em  Let $G$ be a finitely generated
  pro-$p$ group acting  on a pro-$p$ tree $T$ without   global fixed points. Then
  $G$ splits non-trivially as a free amalgamated pro-$p$ product or
  pro-$p$ HNN-extension over some stabiliser of an edge of $T$.}

\medskip

This in turn allows us to prove that  a non virtually cyclic pro-$p$ group acting on a pro-$p$ tree with finite edge stabilizers has more than one end.

\bigskip\noindent
{\bf Theorem \ref{number of ends}.} {\em Let $G$ be a finitely generated pro-$p$ group acting  on a pro-$p$ tree with finite edge stabilizers and without  global fixed points.   Then either $G$ is virtually cyclic and $H^1(G, \ffi_p[[G]])\cong \ffi_p$ (i.e. $G$ has two ends) or  $H^1(G, \ffi_p[[G]])$ is infinite  (i.e. $G$ has infinitely many ends).}

\medskip
Theorem \ref{splitting} raises naturally the question of accessibility;
namely whether we can continue to split  $G$ into an amalgamated free
product or HNN-extension  forever, or do we reach the situation after finitely many steps where we can not split it anymore.  The importance of this is underlined also by the following observation: if  a pro-$p$ group $G$ acting on a pro-$p$ tree $T$ is accessible with respect to splitting over edge stabilizers, then by Theorem \ref{splitting} this implies  finiteness of the maximal vertex stabilizers up to conjugation and so Theorem \ref{General} provides the structure theorem for $G$.

In the abstract situation accessibility was studied by Dunwoody
\cite{D85}, \cite{Dun93} for splitting over finite groups and in
\cite{BF} over an arbitrary family of groups. In the pro-$p$ case
accessibility was studied by Wilkes \cite{wilkes} where a finitely generated not accessible pro-$p$ group was constructed. For a finitely generated pro-$p$ group acting faithfully and irreducibly on a pro-$p$ tree (see Section \ref{preliminaries} for definitions) no such example is known.

The next theorem gives  a sufficient condition of accessibility for  a pro-$p$ group; we do not know whether the converse also holds (it holds in the abstract case). 

\bigskip\noindent
{\bf Theorem \ref{accessible-2}.}\ {\em  Let $G$ be a finitely generated pro-$p$ group. If $H^1(G,
  \F_p[[G]])$ is  a finitely generated $\F_p[[G]]$-module, then $G$ is
  accessible.}\footnote{proved by G. Wilkes independently in
  \cite{G.wilkes}.}

\medskip

We show here that finitely generated pro-$p$ groups are accessible with respect to cyclic subgroups and in fact give precise bounds.

\bigskip\noindent
{\bf Theorem \ref{cyclic General}.} \ {\em 
Let $G$ be a finitely generated pro-$p$ group acting on a pro-$p$ tree $T$ with procyclic edge stabilizers.
 Then  $G$ is  the fundamental group of a finite graph
of finitely generated pro-$p$ groups $(\G,\Gamma)$, where  each vertex
group $\G(v)$ and each edge group $\G(e)$ is conjugate into a subgroup
of a vertex stabilizer $G_{\tilde v}$ and an edge stabilizer
$G_{\tilde e}$ respectively. Moreover, $|V(\Gamma)|\leq 2d-1$, and $|E(\Gamma)|\leq 3d-2$, where $d$ is the minimal number of generators of $G$.}

\medskip
Observe that Theorem \ref{cyclic General} contrasts with  the abstract groups situation where for finitely generated groups  the result does not hold (see \cite{DJ}).

\medskip
As a corollary we deduce the bound for pro-$p$ limit groups (pro-$p$
analogs of limit groups introduced in \cite{Kochloukova2}, see Section
\ref{Generalized Accessible}  for a precise definition).

\bigskip\noindent
{\bf Corollary \ref{cor-KS}.}\ 
{\em Let $G$ be a pro-$p$ limit group. Then  $G$ is  the fundamental group of a finite graph
of finitely generated pro-$p$ groups $(\G,\Gamma)$, where   each edge group $\G(e)$ is infinite procyclic. Moreover, $|V(\Gamma)|\leq 2d-1$, and $|E(\Gamma)|\leq 3d-2$, where $d$ is the minimal number of generators of $G$.}

 \medskip
It is worth to mention that for abstract limit groups the best known estimate for $|V(\Gamma)|$ is $1+4(d(G)-1)$, proved by Richard Weidmann in \cite[Theorem 1]{W}.

\medskip
In Section \ref{Howson property} we investigate Howson's property for free products with procyclic amalgamation and HNN-extensions with procyclic associated subgroups. In Section \ref{normalizers} we apply the results of Section  \ref{Howson property} to normalizers of procyclic subgroups.

\bigskip\noindent
{\bf Theorem \ref{normalizer-3}.}\ {\em Let $C$ be a procyclic pro-$p$ group and $G=G_1\amalg_CG_2$ be a free amalgamated pro-$p$ product or a pro-$p$ HNN-extension $G={\rm HNN}(G_1,C,t)$ of Howson groups. Let $U$ be a procyclic
     subgroup of $G$   and $N=N_G(U)$.  Assume that $N_{G_i}(U^g)$ is finitely
   generated whenever $U^g\leq G_i$. If $K\leq
   G$ is finitely generated, then so is $K\cap N$.}

 \medskip

Section \ref{preliminaries} contains basic notions and  facts of the
theory of pro-$p$ groups acting on trees used in the paper. The
following sections are devoted to the proofs of the results mentioned above.

\section{Notation, definitions and basic results}\vlabel{preliminaries}

\para{\bf Notation.} If a pro-$p$ group $G$ continuously acts on a profinite space $X$ we call $X$ a $G$-space.   
$H_1(G)$ denotes the first homology $H_1(G,\ffi_p)$ and is canonically isomorphic to
$G/\Phi(G)$. 
If $x\in T$ and $g\in G$, then $G_{gx}=gG_xg\inv$. We shall use the
notation  $h^g=g\inv hg$ for conjugation.  If $H$ a subgroup of $G$,  $H^G$  will stand for the (topological) normal closure 
 of $H$ in $G$. If $G$ is an abstract group $\widehat G$ will mean the pro-$p$ completion of $G$.
  
\para{\bf Conventions.} Throughout the paper, unless otherwise stated, groups are pro-$p$,
subgroups will be closed and 
morphisms will be continuous. Finite graphs of groups will be proper and
reduced (see Definitions \ref{proper} and \ref{reduced}). Actions of
a pro-$p$ group $G$  on a profinite graph $\Gamma$ will a priori be supposed to be faithful
(i.e., the action has no kernel), unless we consider actions on
subgraphs of $\Gamma$.

 \bigskip
 Next  we collect basic definitions, following \cite{horizons}.

\subsection{Profinite graphs}

\begin{defn} A
{\em profinite graph} is a triple $(\Gamma, d_0, d_1)$, where
$\Gamma$ is a profinite (i.e. boolean) space and $d_0,d_1:\Gamma \to \Gamma $ are
continuous maps such that $d_id_j=d_j$ for $i, j \in \{0, 1 \}$.
The elements of $V(\Gamma):=d_0(G)\cup d_1(G)$ are called the
{\em vertices} of $\Gamma$ and the elements of
$E(\Gamma):=\Gamma\setminus V(\Gamma)$ are called the {\em edges} of
$\Gamma$. If $e\in E(\Gamma)$, then $d_0(e)$ and $d_1(e)$ are
called the initial and terminal vertices of $e$. If there is no
confusion, one can just write $\Gamma$ instead of $(\Gamma, d_0,
d_1)$. \end{defn}

\begin{defn}
A {\em morphism} $f:\Gamma \to \Delta$  of graphs is a map $f$ which
commutes with the $d_i$'s. Thus it will send vertices to vertices, but
might send an edge to a vertex.\footnote{It is called a {\em
    quasimorphism} in \cite{R 2017}.} 
\end{defn}

\begin{defn} Every profinite graph $\Gamma$ can be represented as an inverse limit $\Gamma=\varprojlim \Gamma_i$ of its finite quotient graphs (\cite[Proposition 1.5]{horizons}).

A profinite graph $\Gamma$ is said to be {\em connected} if all
its finite quotient graphs are connected. Every profinite graph is
an abstract graph, but in general a connected profinite graph is
not necessarily connected as an abstract graph. \end{defn}

\begin{para} \vlabel{collapse of edges}{\bf Collapsing edges.} If $\Gamma$ is a graph and $e$ an edge which is not a loop we can {\em collapse} the edge $e$ by removing $\{e\}$ from the edge set of $\Gamma$, and
identify $d_0(e)$ and $d_1(e)$ in a new vertex $y$. I.e., $\Gamma'$ is
the graph given by $V(\Gamma')=V(\Gamma)\setminus \{d_0(e),d_1(e)\}\cup
\{y\}$ (where $y$ is a new vertex), and $E(\Gamma')=E(\Gamma)\setminus
\{e\}$. We define $\pi:\Gamma\to \Gamma'$ by setting $\pi(m)=m$ if
$m\notin \{e,d_0(e),d_1(e)\}$, $\pi(e)=\pi(d_0(e))=\pi(d_1(e))=y$. The
maps $d'_i:\Gamma'\to \Gamma'$ are defined so that $\pi$ is a morphism
of graphs. Another way of describing $\Gamma'$ is that
$\Gamma'=\Gamma/\Delta$, where $\Delta$ is the subgraph
$\{e,d_0(e),d_1(e)\}$ collapsed into the vertex $y$.
\end{para}

\subsection{Pro-$p$ trees}\label{pro-p tree}
\begin{para} {\bf An exact sequence.} Let $\Gamma$ be a profinite graph, with set of vertices $V(\Gamma)$ and
$E(\Gamma)=\Gamma\setminus V(\Gamma)$. 
Let $(E^*(\Gamma), *)=(\Gamma/V(\Gamma), *)$ be the pointed profinite
quotient space with $V(\Gamma)$ as  distinguished point, and let
$\mathbb{F}_p[[E^*(\Gamma), *]]$ and $\mathbb{F}_p[[V(\Gamma)]]$ be
respectively the free profinite $\mathbb{F}_p$-modules over the pointed
profinite space $(E^*(\Gamma), *)$ and over the profinite space
$V(\Gamma)$ (cf. \cite[section 5.2]{RZ-10}). Note that when
  $E(\Gamma)$ is closed, then $\mathbb{F}_p[[E^*(\Gamma), *]]=\mathbb{F}_p[[E(\Gamma)]]$. Let the maps $\delta: \mathbb{F}_p[[E^*(\Gamma), *]] \to \mathbb{F}_p[[V(\Gamma)]] $ and $\epsilon : \mathbb{F}_p[[V(\Gamma)]] \to \mathbb{F}_p$ be defined respectively by $\delta(e)=d_1(e)-d_0(e)$ for all $e\in E^*(\Gamma)$ and $\epsilon(v)=1$ for all $v\in V(\Gamma)$. Then we have the following complex of free profinite $\mathbb{F}_p$-modules
\begin{equation*}
    \begin{CD}
      0 @>>> \mathbb{F}_p[[E^*(\Gamma), *]] @>\delta>> \mathbb{F}_p[[V(\Gamma)]] @>\epsilon>>
      \mathbb{F}_p @>>> 0.
    \end{CD}
  \end{equation*}
  \end{para}

\begin{defn}  The profinite graph $\Gamma$ is a {\em pro-$p$ tree} if
  the above sequence is exact. If $T$ is a pro-$p$ tree, then we say
  that a pro-$p$ group $G$ {\em acts on $T$} if it acts continuously on
  $T$ and the action commutes with $d_0$ and $d_1$. We say that $G$ acts
  {\em irreducibly} on $T$ if $T$ does not have proper $G$-invariant
  subtrees and that it acts {\em faithfully}  if the kernel of the action is trivial.   If $t\in V(T)\cup E(T)$ we denote by $G_t$ the stabilizer of $t$ in $G$.\\
For a pro-$p$ group $G$ acting on a pro-$p$ tree $T$ we let $\tilde{G}$
denote the subgroup generated by all vertex stabilizers. Moreover, for
any two vertices $v$ and $w$ of $T$ we let $[v,w]$ denote the geodesic connecting $v$ to $w$ in $T$, i.e., the (unique) smallest pro-$p$ subtree of $T$ that contains $v$ and $w$.
\end{defn}

\subsection{Finite graphs of pro-$p$ groups} 
When we say that ${\cal G}$ is a finite graph of pro-$p$ groups we mean that it contains the data of the
underlying finite graph, the edge pro-$p$ groups, the vertex pro-$p$
groups and the attaching continuous maps. More precisely,

\begin{defn}
let $\Gamma$ be a connected finite graph. A    graph of pro-$p$ groups $({\cal G},\Gamma)$ over
$\Gamma$ consists of  specifying a pro-$p$ group ${\cal G}(m)$ for each $m\in \Gamma$, and continuous monomorphisms
$\partial_i: {\cal G}(e)\longrightarrow {\cal G}(d_i(e))$ for each edge
$e\in E(\Gamma)$.
\end{defn}

\begin{defn}
  \begin{enumerate}
\item A {\em morphism} of graphs of pro-$p$ groups:
$(\G,\Gamma) \rightarrow (\H,\Delta)$ is a pair 
$(\alpha,\bar\alpha)$  of maps, with  
 $\alpha:\G\longrightarrow\H$ a continuous map, and  $\bar\alpha:\Gamma\longrightarrow
\Delta$ a morphism of graphs, and such that $\alpha_{\G(m)}:\G(m)\longrightarrow
\H(\bar\alpha(m))$ is a homomorphism for each $m\in \Gamma$ and  which commutes with the appropriate $\partial _i$. Thus the diagram

$$\xymatrix{
\G\ar@{->}^\alpha[rr]\ar@{->}^{\partial_i}[d] & &\H\ar@{->}^{\partial_i}[d]\\
\G\ar@{->}^{\alpha}[rr] & &\H }$$ is commutative.

\item We say that $(\alpha, \bar\alpha)$ is a {\em monomorphism} if both $\alpha,\bar\alpha$ are injective. In this case its image will be called a subgraph of groups of $(\H, \Delta)$. In other words, a {\em subgraph of groups} of  a graph of pro-$p$-groups
  $(\G,\Gamma)$ is a graph of groups $(\H,\Delta)$, where $\Delta$ is a
subgraph of $\Gamma$ (i.e., $E(\Delta)\subseteq E(\Gamma)$ and
$V(\Delta)\subseteq V(\Gamma)$, the maps $d_i$ on $\Delta$ are the
restrictions of the maps $d_i$ on $\Gamma$), and for each $m\in\Delta,$
$\H(m)\leq \G(m)$.
\end{enumerate}
\end{defn}

\begin{para}{\bf Definition of the fundamental group.} The pro-$p$ fundamental group
$$G= \Pi_1({\cal G},\Gamma)$$
of the graph of pro-$p$ groups $({\cal G},\Gamma)$ is defined by means
of a universal property: $G$ is a  pro-$p$ group together
with the following data  and conditions:
\begin{enumerate}
\item [(i)] a maximal subtree $D$ of $\Gamma$;

\smallskip
\item [(ii)]  a collection of continuous homomorphisms
$$\nu_m: {\cal G}(m)\longrightarrow G\quad (m\in \Gamma), $$
  and     a continuous  map
   $E(\Gamma) \longrightarrow  G$, denoted $e\mapsto t_e$  ($e\in E(\Gamma)$), such that
$t_e=1$ if $e\in E(D)$, and
$$(\nu_{d_0 (e)}\partial_0)(x)= t_e(\nu_{d_1 (e)}\partial_1)(x)t_e^{-1},\quad  \forall x\in {\cal G}(e), \ e\in E(\Gamma); $$

\smallskip
\item [(iii)]  the following universal property is satisfied:

\medskip
\noindent whenever one has the following data

\begin{itemize}
\item $H$ is a pro-$p$ group,\\
\item $\beta_m: {\cal G}(m)\longrightarrow H$, $ (m\in \Gamma)$, 
a collection of continuous homomorphisms,\\
\item a map $e\mapsto s_e$ ($e\in E(\Gamma)$)  with $s_e=1$ if
$e\in E(D)$, and\\
\item $(\beta_{d_0 (e)}\partial_0)(x)= s_e(\beta_{d_1
(e)}\partial_1)(x)s_e^{-1}, \forall x\in {\cal G}(e), \ e\in
E(\Gamma),  $\end{itemize}

\smallskip
\noindent then there exists a unique continuous homomorphism $\delta : G\longrightarrow  H$ such that $\delta(t_e)= s_e$
 $(e\in E(\Gamma))$, and for each $m\in\Gamma$ the diagram

\medskip

$$\xymatrix{&
G  \ar[dd]^\delta   \\  {\cal G}(m)  \ar[ru]^{\nu_m}
\ar[rd]_{\beta_m }\\ &H }$$

\medskip
\noindent commutes.
\end{enumerate}
\end{para}

The main examples of $\Pi_1(\G,\Gamma)$ are an amalgamated free pro-$p$
product $G_1\amalg_H G_2$ and an HNN-extension ${\rm HNN}(G,H,t)$ that correspond to the case of $\Gamma$ having one edge and two and one vertex respectively. 

\begin{defn} \vlabel{proper}We call the graph of groups $(\G,\Gamma)$ {\em proper}
  (injective in the terminology of \cite{R 2017}) if the natural map 
  $\G(v)\to \Pi_1(\G,\Gamma)$ is an embedding for all $v\in V(\Gamma)$.
  \end{defn}

\begin{rem} In the pro-$p$ case, a graph of groups $(\G,\Gamma)$ is not always {proper}. However, the
vertex and edge groups can always be replaced by their images  in
$\Pi_1(\G, \Gamma)$ so that $(\G,\Gamma)$ becomes proper and  $\Pi_1(\G,
\Gamma)$ does not change. Thus through out the paper we shall only
consider  proper graphs of pro-$p$ groups. In particular, all our free amalgamated pro-$p$ products are proper. \end{rem}

If $(\G,\Gamma)$ is a finite graph of
finitely generated pro-$p$ groups, then by a theorem of J-P.~Serre (stating that every finite index subgroup of a finitely generated pro-$p$ group is open, cf. \cite[\S 4.8]{RZ-10}) the  fundamental
pro-$p$ group $G=\Pi_1(\G,\Gamma)$ of
$(\G,\Gamma)$ is the pro-$p$ completion of the usual
fundamental group $\pi_1(\G,\Gamma)$ 
(cf. \cite[\S5.1]{Serre-1980}). Note that $(\G,\Gamma)$ is proper if and only if  $\pi_1(\G,\Gamma)$  is  residually $p$. In particular, edge and vertex groups will be
subgroups of $\Pi_1(\G,\Gamma)$.

\begin{para}{\bf Presentation of the fundamental group.}

In \cite[paragraph (3.3)]{Z-M 89b},  the fundamental group
 $G$ is  defined explicitly in terms of generators and relations
 associated to a chosen subtree $D$. Namely 
 
 \begin{equation} \vlabel{presentation} G=\langle
 \G(v), t_e\mid v\in V(\Gamma), e\in E(\Gamma), t_e=1 \ {\rm for}\  e\in D, \partial_0(g)=t_e\partial_1(g)t_e^{-1},\  {\rm for}\ g\in \G(e)\rangle
\end{equation}
I.e., if one takes the abstract fundamental group $G_0=\pi_1(\G,\Gamma)$,
then $\Pi_1(\G,\Gamma)=\varprojlim_N G_0/N$, where $N$ ranges over
all normal subgroups of $G_0$ of index a power of $p$ and with $N\cap
\G(v)$ open in $\G(v)$ for all $v\in V(\Gamma)$. Note that this last
condition is automatic if $\G(v)$ is finitely generated (as a
pro-$p$-group). 
   It is also proved in \cite{Z-M 89b}
that the definition given above is independent of the choice of
the maximal subtree $D$.\end{para}

\begin{defn}\vlabel{reduced} A finite graph of  pro-$p$ groups $(\G,\Gamma)$ is said to be {\em
reduced}, if for every  edge $e$ which is not a loop,
neither $\partial_{1}(e)\colon \G(e)\to \G(d_1(e))$ nor
$\partial_{0}(e):\G(e)\to \G(d_0(e))$ is an isomorphism. \end{defn}

\begin{rem} \vlabel{reduced-2} Any finite graph of  pro-$p$ groups
can be transformed into a reduced finite graph of pro-$p$ groups by the
following procedure: If $\{e\}$ is an edge which is not a
loop and for which
one of $\partial_0$, $\partial_1$ is an isomorphism,  we can collapse
$\{e\}$ to a vertex $y$ (as explained in \ref{collapse of edges}). Let
$\Gamma^\prime$ be the finite graph given by
$V(\Gamma^\prime)=\{y\}\sqcup V(\Gamma)\setminus\{d_0(e),d_1(e)\}$
and $E(\Gamma^\prime)=E(\Gamma)\setminus\{e\}$, and let
$(\G^\prime, \Gamma^\prime)$ denote the finite graph of  groups
based on $\Gamma^\prime$ given by $\G^\prime(y)=\G(d_1(e))$ if
$\partial_{0}(e)$ is an isomorphism, and $\G^\prime(y)=\G(d_0(e))$ if
$\partial_{0}(e)$ is not an isomorphism. \\
This procedure can be
continued until $\partial_{0}(e), \partial_{1}(e)$ are not surjective for
all edges not defining loops. 
 Note that
the  reduction process does not change the
fundamental pro-$p$ group, i.e., one has a canonical isomorphism
$\Pi_1(\G,\Gamma)\simeq \Pi_1(\G_{red},\Gamma_{red})$.
So, if the pro-$p$ group $G$ is the fundamental group of a finite
graph of pro-$p$ groups, we may assume that the finite graph of
pro-$p$ groups is reduced. \\[0.05in]

\end{rem}

\begin{para}{\bf Standard (universal) pro-$p$ tree.}
Associated with the finite graph of pro-$p$ groups $({\cal G}, \Gamma)$ there is
a corresponding  {\em  standard pro-$p$ tree}  (or universal covering graph)
  $T=T(G)=\bigcupdot_{m\in \Gamma}
G/\G(m)$ (cf. \cite[Proposition 3.8]{Z-M 89b}).  The vertices of
$T$ are those cosets of the form
$g\G(v)$, with $v\in V(\Gamma)$
and $g\in G$; its edges are the cosets of the form $g\G(e)$, with $e\in
E(\Gamma)$; and the incidence maps of $T$ are given by the formulas:

$$d_0 (g\G(e))= g\G(d_0(e)); \quad  d_1(g\G(e))=gt_e\G(d_1(e)) \ \ 
(e\in E(\Gamma), t_e=1\hbox{ if }e\in D).  $$

 There is a natural  continuous action of
 $G$ on $T$, and clearly $ G\backslash T= \Gamma$. There is a standard
connected transversal $s:\Gamma\to T$, given by $m\mapsto \G(m)$.  Note
that  $s_{|D}$  is an isomorphism of graphs and the elements $t_e$ satisfy the  equality $d_1(s(e))=t_es(d_1(e))$. Using the map $s$, we shall identify $\G(m)$ with the stabilizer $G_{s(m)}$ for $m\in \Gamma$:

\begin{equation}\vlabel{transversal}
\G(e)=G_{s(e)}=G_{d_0(s(e))}\cap G_{d_1(s(e))}=\G(d_0(e))\cap t_e\G(d_1(e))t_e^{-1} 
\end{equation}
with $t_e=1$ if $e\in D$. 
Remark also that since $\Gamma$ is finite, $E(T)$ is compact.

\end{para}

\begin{para}{\bf The fundamental group of a profinite graph.}\label{fundamental}
If all vertex and edge groups are trivial we get the definition of the
pro-$p$ {\em fundamental group} $\pi_1(\Gamma)$. It follows that
$\pi_1(\Gamma)$ is a free pro-$p$ group on the base $\Gamma\setminus D$
and so coincides with the pro-$p$ completion $\widehat
\pi_1^{abs}(\Gamma)$ of the abstract (usual) fundamental group
$\pi_1^{abs}(\Gamma)$ that also can be defined traditionally by closed
circuits. \\
Therefore  if $\Gamma$ is  connected profinite and $\Gamma=\varprojlim
\Gamma_i$ is an inverse limit of finite graphs it induces the inverse
system  $\{\pi_1(\Gamma_i)=\widehat \pi_1^{abs}(\Gamma_i)\}$ and
$\pi_1(\Gamma)$ is defined as $\pi_1(\Gamma)=\varprojlim_i
\pi_1(\Gamma_i)$ in this case. The fundamental group $\pi_1(\Gamma)$ acts freely on a pro-$p$ tree $\widetilde \Gamma$ (universal cover) such that $\pi_1(\Gamma)\backslash \tilde\Gamma=\Gamma$ (see  \cite{Z-89} or \cite[Chapter 3]{R 2017} for details).

\end{para}

We shall use frequently in the paper the following known results.

\begin{prop}\label{minimal subtree}(\cite[Lemma 3.11]{horizons}).  Let $G$ be a pro-$p$ group acting on a pro-$p$ tree $T$. Then there exists a nonempty minimal $G$-invariant subtree $D$ of $T$. Moreover, if $G$ does not stabilize a vertex, then $D$ is unique.

\end{prop}

\begin{thm}\label{finite group} (\cite[Theorem 3.9]{horizons})
Let $G$ be a finite $p$-group acting on a pro-$p$ tree $T$. Then $G$ fixes a vertex of $T$.
\end{thm}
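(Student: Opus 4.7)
My plan is to argue by induction on $|G|$, reducing to the case where $G$ is cyclic of order $p$. If $|G|=1$ the claim is trivial, so assume $|G|\geq p$ and pick a normal subgroup $N\lhd G$ of index $p$; by the inductive hypothesis $N$ fixes a vertex, so the set $T^N$ of $N$-fixed points is nonempty.

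The first key step is to verify that $T^N$ inherits the structure of a pro-$p$ subtree of $T$. It is closed in $T$ as an intersection of fixed-point sets of continuous maps, and since the action commutes with $d_0$ and $d_1$ one checks that if $N$ fixes an edge it also fixes both endpoints, so $T^N$ is automatically a profinite subgraph. The subtlety is exactness of the associated complex
\[ 0 \to \ffi_p[[E^*(T^N),*]] \to \ffi_p[[V(T^N)]] \to \ffi_p \to 0, \]
which I would deduce from exactness for $T$ by applying the $N$-invariants functor to the $\ffi_p[[N]]$-module exact sequence of $T$, together with a careful analysis of how the $N$-action decomposes the relevant modules.

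Once $T^N$ is known to be a pro-$p$ subtree (and is automatically $G$-invariant since $N$ is normal in $G$), the quotient $G/N\cong \Z/p\Z$ acts on $T^N$, and we are reduced to the case $G=\langle g\rangle$ of order $p$. Suppose for contradiction that $G$ fixes no vertex of $T$, and let $D\subseteq T$ be the unique minimal $G$-invariant subtree provided by Proposition \ref{minimal subtree}. No vertex of $D$ is fixed, and since the action cannot invert edges (it commutes with $d_0,d_1$) no edge of $D$ is fixed either, so $G$ acts freely on both $V(D)$ and $E(D)$, making $\ffi_p[[V(D)]]$ and $\ffi_p[[E^*(D),*]]$ into (topologically) free $\ffi_p[[G]]$-modules. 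Applying the coinvariants functor $-\otimes_{\ffi_p[[G]]}\ffi_p$ to the defining exact sequence of $D$ and invoking the long Tor exact sequence, the term $\mathrm{Tor}_1^{\ffi_p[[G]]}(\ffi_p,\ffi_p)\cong \ffi_p$ produces a four-term exact sequence comparing the quotient graph $G\backslash D$ with a pro-$p$ tree; the extra $\ffi_p$-term must then force either a proper $G$-invariant subtree of $D$ (contradicting minimality) or a $G$-fixed vertex.

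The main obstacle, in my view, is the first step: verifying that $T^N$ is a pro-$p$ subtree is not automatic from the set-theoretic fixed points and requires a careful homological argument, essentially amounting to showing that the defining exact sequence for $T$ remains exact after restriction to $N$-fixed points. The final Tor computation and the reduction from general $p$-groups to cyclic order $p$ are comparatively routine once the subtree structure is in hand.
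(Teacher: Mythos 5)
First, note that the paper does not prove this statement at all: it is imported verbatim as \cite[Theorem 3.9]{horizons}, so there is no in-paper proof to compare against. Your overall strategy (induct on $|G|$, pass to $T^N$ for $N\lhd G$ of index $p$, reduce to $G\cong\Z/p\Z$, and rule out a free action homologically) is essentially the standard route to this result, but as written it has two genuine gaps.

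The first gap is your Step 1. That $T^N$ is a pro-$p$ subtree when nonempty is itself a nontrivial theorem (it is \cite[Theorem 3.7]{horizons}, which this paper invokes separately in the proof of Proposition \ref{normalizers}), and the mechanism you propose for it does not work: the $N$-invariants functor applied to $0\to\ffi_p[[E^*(T),*]]\to\ffi_p[[V(T)]]\to\ffi_p\to0$ does not produce the corresponding complex for $T^N$, because $\ffi_p[[V(T)]]^N$ is strictly larger than $\ffi_p[[V(T^N)]]$ in general (sums over nontrivial $N$-orbits are invariant elements not supported on fixed points). The actual proof of \cite[Theorem 3.7]{horizons} goes through geodesics ($[v,w]$ is the unique minimal subtree containing two fixed vertices, hence $N$-invariant, and is fixed pointwise as in \cite[Corollary 3.8]{horizons}), not through invariants of the chain complex. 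The second gap is the conclusion of your base case: applying $-\otimes_{\ffi_p[[G]]}\ffi_p$ and using $\mathrm{Tor}_1^{\ffi_p[[G]]}(\ffi_p,\ffi_p)\cong\ffi_p$ yields only that the quotient complex fails exactness at the edge term, i.e.\ that $G\backslash D$ is not a tree. That is not a contradiction --- a free action of $\Z_p$ on a pro-$p$ tree with quotient a loop exhibits exactly the same four-term sequence --- and it does not ``force a proper invariant subtree or a fixed vertex.'' To finish you must use that $G$ is \emph{finite} cyclic in degree $2$: a free action makes $\ffi_p[[E^*(D),*]]$ and $\ffi_p[[V(D)]]$ free $\ffi_p[[G]]$-modules, so the defining sequence shows $\ffi_p$ has projective dimension $\leq 1$ over $\ffi_p[[G]]$, contradicting $H^2(\Z/p\Z,\ffi_p)\neq 0$ (equivalently $\mathrm{Tor}_2^{\ffi_p[[G]]}(\ffi_p,\ffi_p)\neq 0$); alternatively, quote the theorem that a pro-$p$ group acting freely on a pro-$p$ tree is free pro-$p$, hence torsion-free. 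With those two repairs the argument is sound.
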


\begin{thm}\label{trivial stabilizers}  (\cite[Proposition 2.4]{HZZ} or \cite[Theorem 9.6.1]{R 2017})
Let $G$ be a pro-$p$ group acting on a second countable (as a topological space) pro-$p$ tree $T$ with trivial edge stabilizers. Then there exists a continuous section $\sigma:G\backslash V(T)\longrightarrow V(T)$ and 

$$G=\coprod_{v\in G\backslash V(T)} G_{\sigma(v)}\amalg F,$$ where  $F $ is a free pro-$p$ group naturally isomorphic to $G/\tilde G$.

\end{thm}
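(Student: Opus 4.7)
The plan is to use the second-countability hypothesis to build a continuous section $\sigma$ of the orbit map $V(T)\to G\backslash V(T)$ that is compatible with the graph structure, and then read off the free pro-$p$ product decomposition from the stabilizers along $\sigma$, with the triviality of edge stabilizers supplying the free pro-$p$ factor $F$.

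First, since every edge stabilizer is trivial, $G$ acts freely on $E(T)$, so $\Gamma:=G\backslash T$ is a Hausdorff profinite graph with $E(T)\to E(\Gamma)$ a principal $G$-bundle, and in particular $V(\Gamma)$ is a profinite space. Next, write $T=\varprojlim_n T_n$ as an inverse limit of finite $G$-quotient trees, corresponding to a countable descending chain of open normal subgroups $N_n\trianglelefteq G$ with $\bigcap_n N_n=1$; such a cofinal countable chain exists by second-countability. At each finite level, $G/N_n$ is a finite $p$-group acting on the finite tree $T_n$ with trivial edge stabilizers, so finite Bass-Serre theory supplies a section $\sigma_n:V(G/N_n\backslash T_n)\to V(T_n)$ whose image is a fundamental domain. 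A standard inverse-limit selection argument, using compactness of the finite fibres, allows the $\sigma_n$ to be chosen coherently and assembled into a continuous section $\sigma$ in the limit.

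With $\sigma$ fixed, set $\tilde G:=\langle G_v:v\in V(T)\rangle$. Triviality of edge stabilizers precludes any nontrivial relation among elements lying in distinct vertex stabilizers when read off along a path in $T$, which forces $\tilde G$ to be the free pro-$p$ product $\coprod_{v\in G\backslash V(T)}G_{\sigma(v)}$. The quotient $G/\tilde G$ then acts on the pro-$p$ tree obtained by collapsing all $\tilde G$-vertex orbits, an action with trivial vertex and edge stabilizers, and hence is a free pro-$p$ group $F$. Splicing the two pieces along $\sigma$ yields $G=\coprod_{v\in G\backslash V(T)}G_{\sigma(v)}\amalg F$, as required.

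The main obstacle is the construction of $\sigma$. In the classical Bass-Serre setting one simply picks a maximal subtree of $\Gamma$ and lifts it to $T$, but in the pro-$p$ setting a maximal subtree of $\Gamma$ need not exist, and when it does it need not admit a continuous lift. Second countability is the saving hypothesis: it lets one replace the hunt for a global maximal subtree by a countable inverse system of finite Bass-Serre decompositions, where fundamental-domain choices can be made coherently at each level and then glued into a continuous section of the orbit map.
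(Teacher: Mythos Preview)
The paper does not prove this theorem: it is quoted in the Preliminaries as a known result, with references to \cite[Proposition~2.4]{HZZ} and \cite[Theorem~9.6.1]{R 2017}, and no argument is given. So there is no ``paper's own proof'' to compare your proposal against; one can only assess your sketch on its merits.

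Your outline has the right shape, but two of the steps are genuine gaps rather than routine details.

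\textbf{The inverse-limit setup is ill-posed.} You write $T=\varprojlim_n T_n$ with the $T_n$ ``finite $G$-quotient trees corresponding to open normal subgroups $N_n\trianglelefteq G$''. These are two different kinds of quotients and neither gives what you claim. Quotienting $T$ by an open normal subgroup $N_n$ of $G$ does not yield a finite graph: $N_n\backslash T$ is typically an infinite profinite graph, and it is not a tree in general. Conversely, the finite quotient graphs that exhibit $T$ as an inverse limit come from $G$-invariant closed equivalence relations on $T$, not from open subgroups of $G$, and they are graphs, not trees. So the sentence ``$G/N_n$ is a finite $p$-group acting on the finite tree $T_n$ with trivial edge stabilizers'' has no well-defined meaning, and the subsequent ``finite Bass--Serre'' step does not get off the ground. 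The actual role of second countability in the cited proofs is to guarantee a continuous section of the orbit map on $V(T)$ (and, more delicately, a connected $0$-transversal of the action on $T$); this is done by a careful inverse-limit argument on finite quotient \emph{graphs}, not trees, and not indexed by open subgroups of $G$.

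\textbf{The ``splicing'' step is exactly the content of the theorem.} Even granting a continuous section $\sigma$ and the identification of $\tilde G$ with $\coprod_v G_{\sigma(v)}$, knowing that $G/\tilde G$ is free pro-$p$ only tells you the extension $1\to\tilde G\to G\to G/\tilde G\to 1$ splits (free pro-$p$ groups are projective), so $G\cong\tilde G\rtimes F$. A semidirect product is not a free pro-$p$ product; upgrading $\rtimes$ to $\amalg$ is precisely where the tree and the transversal must do real work, by exhibiting explicit free generators of $F$ inside $G$ coming from edges outside a lifted ``maximal subtree'' and verifying the universal property. Your proposal asserts this step but does not carry it out. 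Similarly, the claim that $\tilde G=\coprod_v G_{\sigma(v)}$ cannot be established by the abstract-group heuristic ``no relations along a path''; in the pro-$p$ category you must check the universal property of the free pro-$p$ product, and that again requires the transversal.
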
 

\begin{thm}(\cite[Theorem 7.1.2]{R 2017}, \cite[Theorem 3.10]{Z-M 89b})\label{finite subgroups}
Let $G=\Pi_1(\G, \Gamma)$ be the fundamental pro-$p$ group of a finite
graph of pro-$p$ groups $(\G, \Gamma)$. Then any finite subgroup $K$ of
$G$ is conjugate into some vertex group $\G(v)$. In particular, if the
groups $\G(v)$ are finite, they are exactly the maximal finite subgroups of $G$ up to conjugation. 

\end{thm}

\section{Preliminaries: Auxiliary results}

In this  section we shall prove several auxiliary results on profinite graphs and pro-$p$ groups acting on trees  needed later  in the paper. 

\begin{lem}\vlabel{maximal subtree} (cf. \cite[Corollary 2]{Serre-1980}) Let $\nu:\Delta\longrightarrow
  \Gamma$ be a morphism of finite connected graphs representing the
  collapse of an edge, not a loop. If $T$ is a maximal subtree of
  $\Gamma$, then  $\nu^{-1}(T)$ is a maximal subtree of $\Delta$.\end{lem}

\begin{proof} Consider $\nu^{-1}(T)$ . Since $V(\Gamma)\subset T$, $V(\Delta)\subset \nu^{-1}(T)$. Since $\nu^{-1}(T)$ contains a collapsed edge, $|E(\nu^{-1}(T))|=|E(T)|+1$ and $\nu^{-1}(T)$ is connected. Thus $|E(\nu^{-1}(T))|=|E(T)|+1=|V(\Gamma)|=|V(\Delta)|-1$. Since   $\nu^{-1}(T)$ is connected, it must be a tree, as needed. 

\end{proof} 

\begin{lem}\vlabel{diameter} Let $\Gamma$ be a profinite graph and
  $\Delta$  an abstract connected subgraph of finite diameter $n$
  (i.e. the shortest path between any two vertices has length at most
  $n$). Then the closure $\overline \Delta$ of $\Delta$ in $\Gamma$ has diameter at most $n$.\end{lem}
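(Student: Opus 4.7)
The plan is to replace $\Gamma$ by $\overline{\Delta}$, turn ``diameter $\leq n$'' into closedness of an adjacency relation on vertices, and then deduce the result by passing to closures.

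First I would verify that $\overline{\Delta}$ inherits the structure of a (closed) profinite subgraph of $\Gamma$. Since $d_0d_0=d_0$, the vertex set $V(\Gamma)=\{x\in\Gamma:d_0(x)=x\}$ is closed in $\Gamma$, so $V(\overline{\Delta}):=\overline{\Delta}\cap V(\Gamma)$ is closed. I would also check $V(\overline{\Delta})=\overline{V(\Delta)}$: if $v\in V(\overline{\Delta})$ is the limit of a net $\gamma_\alpha\in\Delta$, then $d_0(\gamma_\alpha)\in V(\Delta)$ and $d_0(\gamma_\alpha)\to d_0(v)=v$. So replacing $\Gamma$ by $\overline{\Delta}$, I may assume $\overline{\Delta}=\Gamma$ and that I must show every two vertices of $\Gamma$ are joined by an abstract walk of length at most $n$.

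Next, define the adjacency relation
\[
R:=\{(d_0(x),d_1(x)):x\in\Gamma\}\subseteq V(\Gamma)\times V(\Gamma).
\]
Because $R$ is the image of the compact space $\Gamma$ under the continuous map $(d_0,d_1)$, it is compact, hence closed in $V(\Gamma)\times V(\Gamma)$. Note that $(u,v)\in R$ if and only if either $u=v$ (taking $x=u\in V(\Gamma)$) or there is an edge $e\in E(\Gamma)$ with endpoints $\{u,v\}$. I would then show that the $n$-fold relational composition $R^n$ is closed: it is the image of the closed (hence compact) fibre product $\{(x_1,\dots,x_n)\in\Gamma^n: d_1(x_i)=d_0(x_{i+1})\}$ under the continuous map $(x_1,\dots,x_n)\mapsto (d_0(x_1),d_1(x_n))$, and the continuous image of a compact set in a Hausdorff space is closed. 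By construction $(u,v)\in R^n$ precisely when there is an abstract walk (of length at most $n$) from $u$ to $v$ in $\Gamma$.

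Now I would finish with a closure argument. By hypothesis on the diameter of $\Delta$, every pair in $V(\Delta)\times V(\Delta)$ lies in $R^n$, so
\[
V(\Delta)\times V(\Delta)\;\subseteq\;R^n.
\]
Since $R^n$ is closed and $V(\overline{\Delta})=\overline{V(\Delta)}$ as shown above,
\[
V(\overline{\Delta})\times V(\overline{\Delta})=\overline{V(\Delta)\times V(\Delta)}\subseteq R^n,
\]
which is exactly the statement that $\overline{\Delta}$ has diameter at most $n$. The main (very mild) obstacle is the bookkeeping to ensure $R$ and its iterates really are closed, i.e.\ that composition of closed relations on a compact Hausdorff space is closed; everything else is a direct compactness argument and does not require the existence of long paths or any lifting of maximal subtrees.
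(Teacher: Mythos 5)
Your argument is essentially correct but follows a genuinely different route from the paper. The paper writes $\Gamma=\varprojlim\Gamma_i$ as an inverse limit of finite quotient graphs, observes that the image $\Delta_i$ of $\Delta$ has diameter $\leq n$, and then invokes the standard fact that an inverse limit of non-empty finite sets (here, sets of paths of length $\leq n$ between the images of $v$ and $w$) is non-empty. You instead argue intrinsically inside $\overline\Delta$, encoding ``distance $\leq n$'' as membership in the $n$-fold composite of a closed adjacency relation and concluding by taking closures; both are compactness arguments, but yours avoids choosing a presentation of $\Gamma$ as an inverse limit and makes the key point (closedness of the ``distance $\leq n$'' relation) completely explicit, which is arguably cleaner.

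One slip needs fixing: your relation $R=\{(d_0(x),d_1(x)):x\in\Gamma\}$ is not symmetric, so your claim that $(u,v)\in R$ iff $u=v$ or there is an edge with endpoints $\{u,v\}$ is wrong in one direction --- an edge $e$ with $d_0(e)=v$, $d_1(e)=u$ does not put $(u,v)$ into $R$. Since a path in $\Delta$ may traverse edges against their orientation, the inclusion $V(\Delta)\times V(\Delta)\subseteq R^n$ can fail as stated. The repair is immediate: replace $R$ by $R\cup R^{-1}$, where $R^{-1}$ is the image of $\Gamma$ under $(d_1,d_0)$; this is again a continuous image of a compact space, hence closed, and the rest of your argument (closedness of the $n$-fold composite, $V(\overline\Delta)=\overline{V(\Delta)}$, and $\overline{A\times A}=\overline A\times\overline A$) goes through verbatim.
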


\begin{proof} Write $\Gamma=\varprojlim \Gamma_i$ as an inverse limit of
  finite quotient graphs and let $\Delta_i$ be the image of $\Delta$ in $\Gamma_i$. Then $\Delta_i$ is finite and has diameter not more than $n$. Since $\overline \Delta=\varprojlim \Delta_i$, so does $\overline \Delta$. Indeed, pick two vertices $v,w$ in $\overline\Delta$ and let $v_i, w_i$ their images in $\Delta_i$. The set $\Omega_i$ of paths of length at most $n$ between $v_i$ and $w_i$ is finite and non-empty. Then $\Omega=\varprojlim \Omega_i$ consists of paths between $v$ and $w$ of length not greater than $n$ and is non-empty.

\end{proof}






\begin{prop} \label{finite diameter} Let $\Gamma$ be a  connected profinite graph of finite diameter.  If $\pi_1(\Gamma)$ is finitely generated, then  there exists a finite connected subgraph $\Delta$ of $\Gamma$ such that $\pi_1(\Gamma)=\pi_1(\Delta)$.

\end{prop}

\begin{proof} By   $\Gamma$ is connected as an abstract graph. Then   by \cite[Proposition 2.7]{SnZ} $\pi_1(\Gamma)$ is the pro-$p$ completion of the usual fundamental group $\pi_1^{abs}(\Gamma)$ and so $\pi_1^{abs}(\Gamma)$ is a free group of the same rank $n$ as $\pi_1(\Gamma)$. Let $D$ be an abstract maximal subtree of the abstract graph $\Gamma$. Then $|\Gamma\setminus D|< \infty$. Let $e_1, \ldots, e_n$ be all edges from $\Gamma \setminus D$. Let $\Omega$ be a minimal subtree of $D$ containing all vertices of $e_1, \ldots, e_n$. Then $\pi_1(\Gamma))$ is free pro-$p$ of rank $n$. Since $\Gamma$ has finite diameter,  $\Omega$ is finite. Therefore $\Delta=\Omega\cup e_1 \cup\cdots \cup e_n$ is a finite connected subgraph of $\Gamma$ and $\pi_1^{abs}(\Delta)$ is a free group of rank $n$. But the fundamental group of a subgraph is a free factor of the fundamental group of a graph, so $\pi_1^{abs}(\Delta)=\pi_1^{abs}(\Gamma)$ and so their pro-$p$ completions  $\pi_1(\Delta)=\pi_1(\Gamma)$.    

\end{proof} 

\begin{prop}\label{modulo stabilizers} Let $G$ be a pro-$p$ group acting
  on a pro-$p$ tree $T$. Then $G/\tilde G=\pi_1(G\backslash T)$ is a free pro-$p$ group acting freely on $\tilde G\backslash T$. Moreover, if $G\backslash T$ is finite,  then the rank of $\pi_1(G\backslash T)$  is $|E(G\backslash T)|-|V(G\backslash T)|+1$. 

\end{prop}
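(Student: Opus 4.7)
The plan is to verify that $G/\tilde G$ acts freely on $\tilde G\backslash T$, show that $\tilde G\backslash T$ is itself a pro-$p$ tree, and then invoke Theorem \ref{trivial stabilizers} to conclude that $G/\tilde G$ is free pro-$p$, finally identifying it with $\pi_1(G\backslash T)$.

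First I would check freeness of the action. By definition $\tilde G$ contains every vertex stabilizer of $T$, and since $G_e\subseteq G_{d_0(e)}\cap G_{d_1(e)}$, every edge stabilizer also lies in $\tilde G$. Hence if $g\tilde G$ fixes the class $\tilde G v$, then $gv=hv$ for some $h\in\tilde G$, so $h^{-1}g\in G_v\subseteq\tilde G$, whence $g\in\tilde G$; the argument for edges is identical. Thus the $G/\tilde G$-action on $\tilde G\backslash T$ has trivial vertex and edge stabilizers.

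Next I would show that $\tilde G\backslash T$ is a pro-$p$ tree. The natural route is to apply the $\tilde G$-coinvariants functor to the defining exact sequence
\[
0\to \ffi_p[[E^*(T),*]] \to \ffi_p[[V(T)]] \to \ffi_p \to 0,
\]
and observe that the resulting modules are canonically $\ffi_p[[E^*(\tilde G\backslash T),*]]$ and $\ffi_p[[V(\tilde G\backslash T)]]$. Right-exactness is automatic; left-exactness is the subtle point, and it is recovered here from the containment $G_e\subseteq G_{d_0(e)}$ of edge into vertex stabilizers, which forces the $\tilde G$-orbits of edges in $T$ to descend injectively into $\tilde G\backslash T$. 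I expect this to be the main obstacle, and it is where one appeals to the standard theory of pro-$p$ trees as developed in \cite{R 2017}.

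Granted these two ingredients, Theorem \ref{trivial stabilizers} applied to the resulting free action of $G/\tilde G$ on the pro-$p$ tree $\tilde G\backslash T$ yields at once that $G/\tilde G$ is a free pro-$p$ group. To identify it with $\pi_1(G\backslash T)$, I would note that $\tilde G\backslash T\to G\backslash T$ is precisely the quotient by this free action, so $\tilde G\backslash T$ plays the role of the universal pro-$p$ cover of $G\backslash T$; the universal property of $\pi_1$ recalled in Section \ref{preliminaries} then identifies $G/\tilde G$ with $\pi_1(G\backslash T)$. Finally, when $G\backslash T$ is finite (connectedness is inherited from $T$), a maximal subtree $D\subseteq G\backslash T$ has $|V(G\backslash T)|$ vertices and $|V(G\backslash T)|-1$ edges, so by the definition of $\pi_1$ as the free pro-$p$ group on $E(G\backslash T)\setminus E(D)$, its rank equals $|E(G\backslash T)|-|V(G\backslash T)|+1$.
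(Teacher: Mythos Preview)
Your overall strategy matches the paper's in spirit---both routes ultimately lean on the standard theory in \cite{R 2017} and \cite{horizons}---but your execution has a genuine gap. You invoke Theorem \ref{trivial stabilizers} to conclude that $G/\tilde G$ is free pro-$p$, but that theorem carries a \emph{second countability} hypothesis on the tree, which is not assumed in the proposition. Without it, the existence of the continuous section $\sigma$ and the free-product decomposition are not guaranteed, so your deduction of freeness does not go through as stated. The paper avoids this entirely by citing \cite[Corollary 3.9.3]{R 2017} directly, which yields both the identification $G/\tilde G\cong\pi_1(G\backslash T)$ and the freeness in one stroke, with no countability restriction; \cite[Proposition 3.5]{horizons} is then cited for the fact that $\tilde G\backslash T$ is a pro-$p$ tree.

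A secondary point: your sketch of left-exactness for the coinvariants sequence is not an argument. Saying that ``$\tilde G$-orbits of edges descend injectively into $\tilde G\backslash T$'' is a tautology (those orbits \emph{are} the edges of the quotient), and does not address injectivity of the induced boundary map $\bar\delta$. You yourself flag this and defer to \cite{R 2017}, which is fine, but then your proof reduces to the paper's: cite the references for the two non-trivial facts, and read off the rank from a maximal subtree. The part you do add---the direct verification that $G/\tilde G$ acts freely on $\tilde G\backslash T$---is correct and worth keeping, but it is not where the content lies.
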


\begin{proof} Recall that $\tilde G$ is the closed subgroup of $G$ generated
  by the vertex stabilisers $G_v$, $v\in T$. By \cite[Corollary 3.9.3]{R 2017} $G/\tilde
  G=\pi_1(G\backslash T)$ is free pro-$p$ and by Proposition
  \cite[Proposition 3.5]{horizons} $\tilde G\backslash T$ is a pro-$p$
  tree. If $\Gamma:=G\backslash T$ is finite it has a maximal subtree $D$ and by \cite[Theorem 3.7.4]{R 2017} a basis of $\pi_1(G\backslash T)$ is $\Gamma\setminus D$. Since $V(D)=V(G\backslash T)$ the result follows.  

\end{proof}

\begin{prop}\label{finite diameter quotient} Let $G$ be a finitely generated pro-$p$ group acting on a pro-$p$ tree $T$ such that $\Gamma=G\backslash T$ has finite diameter. Then $T$ possesses a $G$-invariant subtree $D$ such that $G\backslash D$ is finite.

\end{prop}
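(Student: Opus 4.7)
The plan is to take $D$ to be the minimal $G$-invariant subtree of $T$, which exists uniquely by Proposition~\ref{minimal subtree} when $G$ has no global fixed vertex (in the case $G$ fixes a vertex $v\in V(T)$, simply take $D:=\{v\}$). I would then show that $\Gamma_{0}:=G\backslash D$ is finite.

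The first step is to identify $\pi_{1}(\Gamma_{0})$. I claim that the subgroup $\tilde G_{D}$ of $G$ generated by stabilizers of vertices of $D$ equals the subgroup $\tilde G$ generated by all vertex stabilizers of $T$. Indeed, for any $\tilde w\in V(T)$, the stabilizer $G_{\tilde w}$ preserves the $G$-invariant subtree $D$ setwise and therefore fixes the nearest-point projection of $\tilde w$ onto $D$, which is a vertex of $D$; hence $G_{\tilde w}\leq \tilde G_{D}$. By Proposition~\ref{modulo stabilizers}, $\pi_{1}(\Gamma_{0})=G/\tilde G_{D}=G/\tilde G=\pi_{1}(\Gamma)$, which is finitely generated since $G$ is.

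The second step is to bound $\mathrm{diam}(\Gamma_{0})$ by $N:=\mathrm{diam}(\Gamma)$. Given $\bar u,\bar v\in V(\Gamma_{0})$ with chosen lifts $\tilde u,\tilde v\in V(D)$, I lift a shortest $\Gamma$-path from $\bar u$ to $\bar v$ step-by-step starting at $\tilde u$; this produces an abstract path of length at most $N$ in $T$ ending at $g\tilde v\in V(D)$ for some $g\in G$ (using $G$-invariance of $D$). The smallest pro-$p$ subtree of $T$ containing $\tilde u$ and $g\tilde v$ is then contained in this finite abstract path, so has at most $N$ edges, and it lies in $D$ since $D$ is a pro-$p$ subtree containing both endpoints. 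Projecting to $\Gamma_{0}$ yields a path between $\bar u$ and $\bar v$ of length at most $N$.

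Minimality of $D$ forbids non-trivial hanging subtrees in $\Gamma_{0}$: if $S$ were a subtree of $\Gamma_{0}$ meeting its complement in $\Gamma_{0}$ only at one vertex $v_{0}$, then $q^{-1}(S\setminus\{v_{0}\})$ would be a $G$-invariant disjoint union of pro-$p$ subtrees of $D$, each attached to the rest of $D$ at a single vertex above $v_{0}$, whose removal yields a strictly smaller $G$-invariant subtree, contradicting minimality. Applying Proposition~\ref{finite diameter} to $\Gamma_{0}$ then produces a finite connected subgraph $\Delta\subseteq\Gamma_{0}$ with $\pi_{1}(\Delta)=\pi_{1}(\Gamma_{0})$; by the construction in that proof, $\Gamma_{0}\setminus\Delta$ is a disjoint union of hanging subtrees of $\Gamma_{0}$, which must therefore all be trivial, so $\Gamma_{0}=\Delta$ is finite. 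The main obstacle I anticipate is making the nearest-point projection onto $D$ and the removal of hanging subtrees rigorous in the profinite/pro-$p$ context, since $q^{-1}(S\setminus\{v_{0}\})$ may be a genuinely profinite (rather than finite abstract) object; the argument will require care to preserve closure, continuity, and $G$-invariance simultaneously, and this is where most of the technical bookkeeping will lie.
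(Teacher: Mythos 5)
Your strategy (pass to the minimal $G$-invariant subtree $D$, bound the diameter of $\Gamma_0:=G\backslash D$, apply Proposition \ref{finite diameter}, and kill the leftover hanging subtrees by minimality) is genuinely different from the paper's, and the last step is where it breaks down. To contradict minimality you must exhibit a \emph{proper closed} $G$-invariant pro-$p$ subtree of $D$, so you need $q^{-1}(S\setminus\{v_0\})$ to be \emph{open}; but $S\setminus\{v_0\}$ is only an abstract hanging subtree of the profinite graph $\Gamma_0$ and will in general not be open (think of a profinite star of diameter $2$: a single pendant branch is a limit of the other pendant branches, so deleting it does not leave a closed subgraph, and the closure of the complement is the whole graph). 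Hence minimality of $D$ yields no contradiction and the conclusion $\Gamma_0=\Delta$ is not established. This is not bookkeeping: the entire pro-$p$ difficulty of the statement is concentrated in exactly this point. (A smaller remark: the nearest-point projection onto $D$ that you invoke to get $\tilde G_D=\tilde G$ also needs justification for pro-$p$ trees, but that step is dispensable anyway, since $\pi_1(\Gamma_0)$ is a quotient of $G$ and hence finitely generated regardless.)

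The paper avoids the minimal subtree altogether. It first uses the Frattini quotient $G/\Phi(G)$ to find finitely many vertices $w_1,\dots,w_n$ of $T$ with $G=\langle G_{w_i},\pi_1(\Gamma)\rangle$, then applies Proposition \ref{finite diameter} to get a finite connected $\Delta\subseteq\Gamma$ with $\pi_1(\Delta)=\pi_1(\Gamma)$, enlarges it to a finite connected $\Omega$ containing the images of the $w_i$, and takes $D$ to be the connected component of the preimage of $\Omega$ containing a connected transversal. The $G$-invariance of this $D$ is then forced: its setwise stabilizer contains the $G_{w_i'}$ and (a conjugate of) $\pi_1(\Omega)=\pi_1(\Gamma)$, which together generate $G$. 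If you want to salvage your route, you would have to import exactly this generation argument to show that a suitable component of $q^{-1}(\Omega)$ inside your minimal $D$ is already $G$-invariant, whence $\Gamma_0=\Omega$ by minimality; the Frattini step is the missing idea.
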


\begin{proof} By Proposition \ref{modulo stabilizers}, $G=\tilde G\rtimes \pi_1(\Gamma)$.  We first show that there are finitely many vertices $w_1, \ldots, w_n$ such that $G=\langle G_{w_i}, \pi_1(\Gamma)\mid i=1, \ldots, n\rangle$.

Indeed, let $f:G\longrightarrow G/\Phi(G)$ be the  natural epimorphism
to the quotient modulo the  Frattini subgroup. Then $G/\Phi(G)=f(\tilde
G) \oplus f(\pi_1(\Gamma))$ and since $f(\tilde G)$ is finite (as $G/\Phi(G)$
is) there are vertices $w_1, \ldots, w_n$ of $T$ such that $f(\tilde G)=\langle f(G_{w_1}), \ldots, f(G_{w_n}) \rangle$. Hence $G=\langle G_{w_i}, \pi_1(\Gamma)\mid i=1, \ldots, n\rangle$.

Now since $G$ is fnitely generated, so is $\pi_1(\Gamma)$ and therefore by Proposition \ref{finite diameter}, $\Gamma$ contains a finite
subgraph $\Delta$  such that $\pi_1(\Delta)=\pi_1(\Gamma)$. Let $v_1,
\ldots, v_n$ be the images of $w_1, \ldots, w_n$ in $\Gamma$ and
$\Omega$ a minimal connected graph containing $\Delta$ and $v_1, \ldots,
v_n$. Clearly (because $\Gamma$ has finite diameter) $\Omega$ is finite and so there exists a connected transversal $\Sigma$ of $\Omega$ in $T$.  Let $w'_1, \ldots, w'_n$ be the vertices of $\Sigma$ whose images in $\Omega$ are $v_1, \ldots v_n$ respectively.  Since for each $i$ we have $w'_i=g_iw_i$ for some $g_i\in G$ and so $G_{w'_i}$ is a conjugate of $G_{w_i}$ in $G$, it follows that  $G=\langle G_{w'_i}, \pi_1(\Gamma)\mid i=1, \ldots n\rangle$. Let $D$ be the connected component of the inverse image of $\Omega$ in $T$ containing $\Sigma$. We show that $D$ is $G$-invariant. Let $H=Stab(D)$ be the setwise stabilizer of $D$ in $G$. Clearly, $ G_{w'_i}\leq H$ for each $i$. By \cite[Lemma 2.14]{CZ},  we have
$H \backslash D= \Omega$. Note that $\Delta\subseteq \Omega\subseteq \Gamma$ and so $\pi_1(\Delta)=\pi_1(\Omega)=\pi_1(\Gamma)$.  By Proposition \ref{modulo stabilizers} $H=\tilde H\rtimes \pi_1(\Omega)$, i.e. we may assume that $\pi_1(\Gamma)=\pi_1(\Omega)$ is contained in $H$. But then  $G=H$ and $G\backslash D=\Omega$ is finite as desired.

\end{proof}

\begin{lem} \vlabel{bounding border} Let $G=\Pi_1(\G,\Gamma)$ be the
  fundamental group of a finite graph of pro-$p$ groups
  $(\G,\Gamma)$. Let $D$ be a maximal subtree of $\Gamma$,  $n$ be the
  number of pending vertices of $D$. Then $n \leq 3d(G)$, where $d(G)$
  is  the minimal number of generators of $G$, and $|\Gamma\setminus
  D|\leq d(G)$. 

\end{lem}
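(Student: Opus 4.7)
The bound $|\Gamma\setminus D|\le d(G)$ is the cheap one. Killing the (topological) normal closure in $G$ of all vertex groups $\G(v)$ in the presentation \eqref{presentation} leaves only the relations $t_e=1$ for $e\in E(D)$, so $G$ surjects onto $\pi_1(\Gamma)$, which is the free pro-$p$ group of rank $|\Gamma\setminus D|$ by the spanning-tree presentation recorded in Proposition~\ref{modulo stabilizers}. Hence $|\Gamma\setminus D|=d(\pi_1(\Gamma))\le d(G)$.

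For the bound $n\le 3d(G)$ my plan is as follows. Since collapsing a (non-loop) edge $e$ whose edge map $\partial_i\colon\G(e)\to\G(d_i(e))$ is surjective does not change $\Pi_1(\G,\Gamma)$ and can only decrease the number of pending vertices of the induced maximal subtree, I first reduce to the case where $(\G,\Gamma)$ is \emph{reduced}: for every non-loop edge $e$ and each endpoint $v$ of $e$, $\partial_v\colon\G(e)\hookrightarrow\G(v)$ is a proper inclusion. Then consider the quotient
\[
G\;\twoheadrightarrow\;\Bigl(\coprod_{v\in V(\Gamma)}\G(v)/N_v\Bigr)\;\amalg\;F_r,
\]
where $N_v\trianglelefteq\G(v)$ is the topological normal closure of $\bigcup_{e\ni v}\partial_v(\G(e))$, the group $F_r$ is the free pro-$p$ group on the stable letters $\{t_e:e\in\Gamma\setminus D\}$ (of rank $r=|\Gamma\setminus D|$), and $\amalg$ denotes the free pro-$p$ product (the defining relations of $G$ become trivial after quotienting by all $N_v$). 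Grushko's theorem for free pro-$p$ products then gives
\[
d(G)\;\ge\;\sum_{v\in V(\Gamma)} d\bigl(\G(v)/N_v\bigr)+r.
\]

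Partition the set $P$ of pending vertices of $D$ as $P=P_0\cupdot P_1$, where $P_0$ consists of those of $\Gamma$-degree $1$ and $P_1$ of those of $\Gamma$-degree $\ge 2$. For $v\in P_0$ the only edge at $v$ is $e_v\in E(D)$, so $N_v=\langle\partial_v(\G(e_v))\rangle^{\G(v)}$; by reducedness $\partial_v(\G(e_v))\subsetneq\G(v)$, and the Frattini argument for pro-$p$ groups then forces $N_v\cdot\Phi(\G(v))\subsetneq\G(v)$, whence $d(\G(v)/N_v)\ge 1$. Summing over $P_0$ and using the Grushko inequality yields $|P_0|+r\le d(G)$. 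Each $v\in P_1$ is an endpoint of some edge of $\Gamma\setminus D$, and each such edge contributes at most two endpoint-incidences, so $|P_1|\le 2r$. Adding,
\[
n=|P_0|+|P_1|\;\le\;(d(G)-r)+2r\;=\;d(G)+r\;\le\;2d(G)\;\le\;3d(G).
\]

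The substantive ingredient is Grushko's theorem for free pro-$p$ products; the reduction step and the pro-$p$ Frattini step are standard but should be made explicit. The only delicate point is justifying that the reduction to a reduced graph of groups does not decrease $n$ in a way that invalidates the bound, which is precisely the content of the first-paragraph observation about collapsing.
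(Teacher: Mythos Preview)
Your approach is essentially the paper's: kill all edge groups to obtain a free pro-$p$ product $\coprod_v \overline\G(v)\amalg \pi_1(\Gamma)$, observe that for a pending vertex of $\Gamma$ the factor $\overline\G(v)$ is non-trivial by reducedness and the pro-$p$ Frattini argument, and count. The paper bounds $|P_0|\le d(G)$ and $|P_1|\le 2|\Gamma\setminus D|\le 2d(G)$ separately to get $n\le 3d(G)$; you use the full Grushko inequality $|P_0|+r\le d(G)$ together with $|P_1|\le 2r$ to obtain the sharper $n\le d(G)+r\le 2d(G)$. That refinement is correct and a genuine improvement.

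There is one point to fix. Your reduction step is stated in the wrong direction: you observe that collapsing a fictitious edge can only \emph{decrease} the number of pending vertices of the (induced) maximal subtree, but that says $n'\le n$, so proving $n'\le 3d(G)$ for the reduced graph does not give $n\le 3d(G)$ for the original. In fact the lemma is simply false without reducedness (take all vertex and edge groups trivial on any finite tree: then $d(G)=0$ while $D$ can have arbitrarily many leaves), so no such reduction is possible. The paper's proof has the same tacit hypothesis: it invokes ``because the graph is reduced'' at the key step, and the only place the lemma is used (Theorem~\ref{accessible-2}) is for reduced graphs. So drop the reduction paragraph and simply add ``reduced'' to the hypotheses; the rest of your argument then goes through and in fact yields $n\le 2d(G)$.
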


\begin{proof} For every pending vertex $v$ of $\Gamma$ and the
  (unique) edge $e$ connected to it, $\overline\G(v)=\G(v)/\G(e)^{\G(v)}$
  is non-trivial, because the graph of groups $(\G, \Gamma)$ is reduced, and the groups are
  pro-$p$. Define the quotient graph of groups $(\overline \G, \Gamma)$ by putting $\overline  \G(m)=1$ if $m\in\Gamma$ is not pending vertex and $\overline\G(v)=\G(v)/\G(e)^{\G(v)}\neq 1$ if $v$ is a pending vertex.  Then from the presentation \eqref{presentation} for
  $\Pi_1(\overline \G, \Gamma)$ it follows then  that $\Pi_1(\overline \G,
  \Gamma)=\coprod_{v\in V(\Gamma)} \overline\G(v) \amalg
  \pi_1(\Gamma)$. The natural morphism $(\G,\Gamma)\longrightarrow (\overline \G,
  \Gamma)$ induces then the epimorphism $G=\Pi_1(\G,\Gamma)\longrightarrow \overline G=\Pi_1(\overline \G,
  \Gamma)$.

   The number of pending vertices of $\Gamma$ is at most 
  $d(\overline G)\leq d(G)$. On the other hand the rank of $\pi_1(\Gamma)$ equals
  $|\Gamma\setminus D|$ and is not greater than $d(G)$ by Proposition
  \ref{modulo stabilizers}.  Every edge of $\Gamma\setminus D$ connects
  at most two pending vertices of $D$ and so the number of  pending vertices of $D$ is at most $3d(\overline G)\leq 3d(G)$. 

\end{proof}

\begin{lem} \vlabel{embedding} Let $G$ be a pro-$p$ group acting on a pro-$p$ tree $T$ with $|G\backslash T|<\infty$. Let $H$ be a subgroup of $G$ with an $H$-invariant subtree $D$ of $T$ such that the natural map $H\backslash D\longrightarrow G\backslash T$ is injective. Then   $G=\Pi_1(\G,G\backslash T)$, $H=\Pi_1(\H, H\backslash D)$  and $(\H, H\backslash D)$ is a  subgraph of groups  of $(\G, G\backslash T)$.  \end{lem}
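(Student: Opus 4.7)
The plan is to construct graph-of-groups decompositions of $G$ and $H$ simultaneously, using compatibly chosen maximal subtrees and connected transversals that respect the inclusion of quotients. Write $\Gamma=G\backslash T$ and $\Delta=H\backslash D$, and identify $\Delta$ with its image in $\Gamma$ via the injective map. Since $\Gamma$ is finite, so is $\Delta$; I would first choose a maximal subtree $\Delta_0$ of $\Delta$ and extend it to a maximal subtree $\Gamma_0$ of $\Gamma$.

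Next I would produce compatible lifts. Lift $\Delta_0$ edge by edge to a connected subtree $\Sigma_\Delta\subseteq D$ (using that $\Delta_0$ is a finite abstract tree and $D\to \Delta$ is surjective on edges), then continue to lift the remaining edges of $\Gamma_0\setminus\Delta_0$ in $T$ to obtain a connected transversal $s\colon\Gamma_0\to T_0\subseteq T$ with $s(\Delta_0)=\Sigma_\Delta$. Extend $s$ to all of $\Gamma$ by picking, for each $e\in E(\Gamma)\setminus E(\Gamma_0)$, a lift $s(e)\in T$ with $d_0(s(e))=s(d_0(e))$; for $e\in E(\Delta)\setminus E(\Delta_0)$ this lift can be chosen inside $D$, since the injectivity of $\Delta\hookrightarrow\Gamma$ forces the $G$-orbit of $e$ to meet $D$ in exactly one $H$-orbit. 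Define $t_e\in G$ by $d_1(s(e))=t_es(d_1(e))$; for $e\in E(\Delta)$ the same injectivity lets us choose $t_e\in H$.

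With these choices in place, I would set $\G(m)=G_{s(m)}$ for $m\in\Gamma$ and $\H(m)=H_{s(m)}=H\cap G_{s(m)}$ for $m\in\Delta$, with attaching maps given by \eqref{transversal}. The standard pro-$p$ Bass--Serre construction (\cite[Theorem 3.8]{Z-M 89b}) then identifies the tree associated with $(\G,\Gamma)$ $G$-equivariantly with $T$, and the tree associated with $(\H,\Delta)$ $H$-equivariantly with $D$, giving $G=\Pi_1(\G,\Gamma)$ and $H=\Pi_1(\H,\Delta)$. The subgraph-of-groups property is then immediate: the underlying graph $\Delta$ embeds in $\Gamma$, the stable letters chosen for $\Delta$ lie in $H$, and $\H(m)=H\cap\G(m)\leq\G(m)$ for every $m\in\Delta$.

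The main obstacle is the compatible choice in step two: one must ensure simultaneously that the lift of $\Delta_0$ lands inside $D$, that this lift extends to a connected lift of $\Gamma_0$ in $T$, and that every edge of $\Delta\setminus\Delta_0$ can be lifted inside $D$ with its associated $t_e$ in $H$. The injectivity hypothesis on $H\backslash D\to G\backslash T$ is used precisely here, as it guarantees that two elements of $D$ lie in the same $G$-orbit exactly when they lie in the same $H$-orbit, allowing us to replace unwanted $G$-translates by $H$-translates whenever a compatibility adjustment is needed.
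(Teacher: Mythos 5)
Your proposal is correct and follows essentially the same route as the paper: both arguments extend a maximal subtree of $H\backslash D$ to one of $G\backslash T$, lift it to a connected transversal $S\subseteq D$ extending to a connected transversal of $G\backslash T$ in $T$, define the vertex and edge groups as stabilizers with attaching maps twisted by the elements $t_e$ (chosen in $H$ for edges of $H\backslash D$, which is exactly where the injectivity hypothesis enters), and then invoke the standard identification of $G$ and $H$ with the fundamental groups of the resulting graphs of groups. The paper cites \cite[Proposition 3.10.4 and Theorem 6.6.1]{R 2017} for this last step where you cite \cite[Theorem 3.8]{Z-M 89b}, but the content is the same.
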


\begin{proof}   A maximal subtree of $H\backslash D$ can be extended to
  a maximal subtree of $G\backslash T$ and so we can choose a connected
  transversal $S$ of $H\backslash D$  in $D$ that  extends  to a
  connected transversal $\Sigma$ of $G\backslash T$ in $T$. {We may
    further suppose that if an edge $e$ is in $S$ or $\Sigma$, then so
    is $d_0(e)$. } Let $\rho:T\longrightarrow G\backslash T$ be the
  natural epimorphism.

  Then we can define the graph of groups $(\H,H\backslash D )$
 and $(\G,G\backslash T )$  in the standard manner, as follows. If $s\in
 S$, define $\H(\rho(s))=H_s$; if $e\in S$ is an edge, define $\partial_0:\H_{\rho(e)}\to
\H_{\rho(d_0(e))}$ to be the natural inclusion $H_e\to H_{d_0(e)}$, and
if $k_ed_1(e)\in S$, define $\partial_1:\H_{\rho(e)}\to
\H_{\rho(d_1(e))}$ to be the natural
 inclusion $H_e\to H_{d_1(e)}$ followed by conjugation by $k_e\inv$:
$ H_{\rho(d_1(e))} \to H_{\rho(k_ed_1(e))}$. The definition is similar for
 $(\G,G\backslash T )$. 
 
By \cite[Proposition 3.10.4 and Theorem 6.6.1]{R 2017}, we then have $G=\Pi_1(\G,G\backslash T)$, $H=\Pi_1(\H, H\backslash D)$.

\end{proof}

\section{Splitting of pro-$p$ groups acting on trees}

\begin{lem}\vlabel{inverse limit decomposition} Let $G$ be a finitely
  generated pro-$p$ group acting  on a pro-$p$ tree $T$. Then $G=\varprojlim_{U\triangleleft_o G} G/\tilde U$ and $G/\tilde U=\Pi_1(\G_U,\Gamma_U)$ is the fundamental group of a finite reduced graph of  finite $p$-groups. Moreover, the inverse system $\{G/\tilde U, \pi_{VU}\}$ can be chosen in such a way that it is linearly ordered and for each $\{G/\tilde V\}$ of the system with $V\leq U$  there exists a natural morphism $(\eta_{VU},\nu_{VU}):(\G_V, \Gamma_V)\longrightarrow (\G_U, \Gamma_U)$ where $\nu_{VU}$ is just a collapse of edges of $\Gamma_V$ and $\eta_{VU}(\G_V(m))=\pi_{VU}(\G_V(m))$; the induced  homomorphism of the
pro-$p$ fundamental groups coincides with the canonical projection
$\pi_{VU}\colon G/\tilde V\longrightarrow G/\tilde U$.

\end{lem}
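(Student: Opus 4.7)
The plan is to prove the lemma in three stages corresponding to its three assertions: the inverse limit formula, the graph-of-groups structure of each $G/\tilde U$, and the compatibility of the induced morphisms.

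For the inverse limit, first note that if $U \triangleleft_o G$ then $\tilde U$ is normal in $G$, since $U$ is normal and conjugation by $g \in G$ sends $U_v$ to $U_{gv}$. Moreover, if $V \leq U$ then every $V$-vertex stabilizer lies inside a $U$-vertex stabilizer, so $\tilde V \leq \tilde U$ and the family $\{\tilde U : U \triangleleft_o G\}$ is cofiltered under reverse inclusion. Since $\bigcap_U U = 1$ forces $\bigcap_U \tilde U = 1$, the standard profinite argument gives $G \cong \varprojlim G/\tilde U$ topologically.

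For the structure of $G/\tilde U$, by Proposition \ref{modulo stabilizers} applied to $U$, the quotient $U/\tilde U = \pi_1(U\backslash T)$ is a free pro-$p$ group. Since $G$, and hence $G/\tilde U$, is finitely generated, and since $U/\tilde U$ has finite index $[G:U]$ in $G/\tilde U$, the free group $U/\tilde U$ is itself finitely generated. Thus $G/\tilde U$ is a finitely generated virtually free pro-$p$ group. By the pro-$p$ analogue of the Karrass--Pietrowski--Solitar theorem (Scheiderer, Herfort--Zalesskii), every such group is $\Pi_1$ of a finite graph of finite $p$-groups $(\G_U, \Gamma_U)$, which we can then reduce. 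One may concretely realise the standard pro-$p$ tree of this decomposition as a minimal $G/\tilde U$-invariant subtree $D_U \subseteq \tilde U\backslash T$ (existing by Proposition \ref{minimal subtree}), so that $\Gamma_U = (G/\tilde U)\backslash D_U$ and $\G_U$ assigns the stabilizers.

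For compatibility, given $V \leq U$ the canonical projection $\tilde V\backslash T \to \tilde U\backslash T$ is a $G$-equivariant morphism of pro-$p$ trees, whose ``deck group'' on the source is $\tilde U/\tilde V$, a free pro-$p$ group acting freely on $\tilde V\backslash T$ by Proposition \ref{modulo stabilizers}. Choose the invariant subtrees of the previous paragraph so that $D_V$ maps onto $D_U$; passing to quotients yields a morphism $\nu_{VU}: \Gamma_V \to \Gamma_U$, and the edges of $\Gamma_V$ that get collapsed are precisely those whose $\tilde V$-orbits are fused under the further passage to $\tilde U/\tilde V$-orbits. By construction $\eta_{VU}$ then sends each $\G_V(m)$ to $\pi_{VU}(\G_V(m)) = \G_U(\nu_{VU}(m))$, and the universal property of $\Pi_1$ identifies the induced homomorphism of pro-$p$ fundamental groups with $\pi_{VU}$.

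The principal obstacle will be the coherent choice, for all $U$ in a cofinal subfamily, of the minimal invariant subtree $D_U$ together with the reduced decomposition $(\G_U, \Gamma_U)$, in such a way that each $\nu_{VU}$ is an honest composition of edge collapses rather than a more general morphism of graphs. This should require either a diagonal or Zorn-style argument over the cofiltered system, or a uniqueness-up-to-collapse statement for graph-of-groups decompositions of finitely generated virtually free pro-$p$ groups acting on pro-$p$ trees.
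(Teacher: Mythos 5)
Your first two steps track the paper's argument: $\tilde U\trianglelefteq G$ with $\bigcap_U\tilde U=1$ gives the inverse limit, and Proposition \ref{modulo stabilizers} shows $U/\tilde U$ is free pro-$p$, so $G/\tilde U$ is finitely generated virtually free pro-$p$ and the Herfort--Zalesskii theorem makes it $\Pi_1$ of a finite reduced graph of finite $p$-groups. However, your proposed ``concrete realisation'' of that decomposition --- taking a minimal $G/\tilde U$-invariant subtree $D_U\subseteq \tilde U\backslash T$ and setting $\Gamma_U=(G/\tilde U)\backslash D_U$ --- does not work: there is no reason for that quotient graph to be finite (the possible infiniteness of $G\backslash T$ is precisely the pathology this paper is built around), and the finite decomposition coming from the virtually-free theorem is not canonically attached to the given action on $T$. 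Consequently the construction of $\nu_{VU}$ by ``choosing $D_V$ mapping onto $D_U$'' has no foundation.

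More importantly, the ``Moreover'' clause is the real content of the lemma, and you explicitly defer it (``should require either a diagonal or Zorn-style argument \dots or a uniqueness-up-to-collapse statement''). The paper closes this gap with two specific ingredients you are missing. First, \cite[Prop.~1.10]{RZ-04}: given a decomposition of $G/\tilde V$ over $\Gamma_V$, the quotient $G/\tilde U$ inherits a decomposition over the \emph{same} graph $\Gamma_V$ whose vertex and edge groups are the $\pi_{VU}$-images; reducing this decomposition is what makes $\nu_{VU}$ literally a composition of edge collapses. Second, \cite[Corollary 3.3]{WZ}: a fixed virtually free pro-$p$ group admits only finitely many isomorphism classes of reduced finite graphs of finite $p$-groups decomposing it. This finiteness is what permits a K\"onig's-lemma-type selection: the sets $\Omega_U$ of admissible reduced decompositions are finite, the nested images $T^i(\Omega_{V_i})$ stabilize, and one extracts a coherent infinite sequence. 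No uniqueness-up-to-collapse statement is true here, and a bare Zorn argument over the cofiltered system does not produce the required simultaneous consistency of choices at all levels; so as written the proposal has a genuine gap exactly where the lemma is hardest.
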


\begin{proof} Recall that $\tilde U$ is the closed subgroup of $G$ generated
  by the vertex stabilisers $U_v$. Clearly $G/\tilde U$ and $U/\tilde U$ act on $\tilde U\backslash T$; by Proposition \ref{modulo stabilizers} $U/\tilde U$ is free pro-$p$. Thus
$G_U:=G/\tilde U$ is virtually free pro-$p$.

By \cite[Theorem 1.1]{HZ13}  it follows that $G_U$ is the fundamental pro-$p$
group $\Pi_1(\mathcal{G}_U, \Gamma_U)$ of a finite graph of finite
$p$-groups. As mentioned in Section \ref{preliminaries}  we may assume that $(\G_U,\Gamma_U)$ is reduced.
 
Although  the finite graph of finite 
$p$-groups $(\G_U,\Gamma_U)$ is not uniquely determined by $U$, the index $U$ in the notation
shall express that  these objects are depending on $U$.
Since the maximal finite subgroups of $G_U$ are exactly the vertex
groups of $(\mathcal{G}_U, \Gamma_U)$ up to conjugation (see
Theorem \ref{finite subgroups}), the number of vertices of $\Gamma_U$ does not depend on the choice of $(\G_U,\Gamma_U)$, and since $\pi_1(\Gamma_U)=U/\tilde U$ is free pro-$p$
of rank $|E(\Gamma_U)|-| V(\Gamma_U)|+1$, the size of $\Gamma_U$ is bounded in terms of possible
decompositions as a reduced finite graph of finite $p$-groups of
 $ U/\tilde U$. 

Clearly  we have
$G=\varprojlim_{U}G_U$.

By \cite[Prop.~1.10]{RZ-04}, viewing $G_U$ as a quotient of $G_V$ when
$V\leq U$ (via the natural map $\pi_{VU}\colon G/\tilde V\longrightarrow G/\tilde U$), one has a natural decomposition of
$G/\tilde U$ as the pro-$p$ fundamental group $G/\tilde U=\Pi_1(\G_{VU},\Gamma_V)$
of a finite graph of finite $p$-groups $(\G_{VU},\Gamma_V)$, where the vertex and edge groups satisfy
$\G_{VU}(x)= \pi_{VU}(\G_V(x))$,  
$x\in V(\Gamma_V)\sqcup E(\Gamma_V)$. Thus we have a
morphism $\eta_{VU}\colon (\G_V,\Gamma_V)\longrightarrow (\G_{VU},\Gamma_V)$
of graphs of groups such that the induced homomorphism on the
pro-$p$ fundamental groups coincides with the canonical projection
$\pi_{VU}$. 

\smallskip
If $(\G_{VU},\Gamma_V)$ is not reduced, then collapsing some fictitious
edges $e_i, i=1,\ldots ,k$, we arrive at a reduced graph of groups
$((\G_{VU})_{red},\Delta_V)$. By \cite[Corollary 3.3]{WZ},  the number of
isomorphism classes of finite reduced graphs of finite $p$-groups
$(\G^\prime,\Delta)$ which are based on a finite graph $\Delta$ and satisfy 
$G/\tilde U\simeq\Pi_1(\G^\prime,\Delta)$ is finite.

Using this remark, for each open normal subgroup  $U$ we let $\Omega_U$ be the
(finite) set of reduced finite graphs of finite $p$-groups
$(\G_U,\Gamma_U)$  with $G/\tilde U\simeq\Pi_1(\G_U, \Gamma_U)$.  Let
$V_i$, $i\in \N$, be a decreasing chain of open normal subgroups of $G$
 with $V_0=U$ and $\bigcap_i V_i=(1)$. For $X\subseteq
\Omega_{V_i}$ define $T(X)$ to be the set of all reduced graphs of
groups in $\Omega_{V_{i-1}}$ that can be obtained from graphs of groups
in $X$ by the procedure explained in the preceding paragraph (note that
$T$ does not define a map on $X$). Define $\Omega_1=T(\Omega_{V_1})$,
$\Omega_2=T(T(\Omega_{V_2}))$, \dots,  $\Omega_i=T^{(i)}(\Omega_{V_i})$ and  note
that $\Omega_i$  is a non-empty subset of $\Omega_U$ for every $i\in \N$. Clearly $\Omega_{i+1}\subseteq \Omega_i$ and  since $\Omega_U$  is finite there is an $i_1\in \N$ such that $\Omega_{j}= \Omega_{i_1}$ for all 
$j>i_1$ and we denote this $\Omega_{i_1}$ by $\Sigma_U$. Then
$T(\Sigma_{V_i})=\Sigma_{V_{i-1}}$ for all $i$,  and so we can construct an infinite sequence of graphs of groups $(\G_{V_j}, \Gamma_j)\in\Omega_{V_j}$ such that  
$(\G_{V_{j-1}}, \Gamma_{j-1})\in T(\G_{V_j}, \Gamma_j)$ for all
$j$. This means that $(\G_{V_jV_{j-1}}, \Gamma_{V_j})$ can be reduced to
$(\G_{V_{j-1}}, \Gamma_{j-1})$, i.e., that this sequence $\{(\G_{V_j}, \Gamma_j)\}$ is an inverse system of reduced graphs of groups satisfying the required conditions.

\end{proof} 

Note that in the classical Bass-Serre theory, a finitely generated group $G$ acting  irreducibly on a tree $T$ has finitely many orbits, i.e. $G\backslash T$ is finite. This is not the case in the pro-$p$ case; this fact highlights the complementary difficulties that appear in the pro-$p$ case. The  next result partially overcomes this. 

\begin{thm}\vlabel{splitting} Let $G$ be a finitely generated pro-$p$ group acting  on a pro-$p$ tree $T$ without  global fixed points. Then $G$ splits non-trivially as a free amalgamated pro-$p$ product or pro-$p$ HNN-extension over some stabiliser of an edge of $T$.\end{thm}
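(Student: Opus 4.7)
By Proposition \ref{minimal subtree} we may and do replace $T$ by its unique minimal $G$-invariant subtree, so the action is minimal and has no global fixed vertex. The plan is then to invoke Lemma \ref{inverse limit decomposition}, writing $G=\varprojlim_U G/\tilde U$ with each $G/\tilde U=\Pi_1(\G_U,\Gamma_U)$ a reduced finite graph of finite $p$-groups and connecting morphisms collapses of edges, to produce compatible one-edge splittings of the $G/\tilde U$, and to assemble them into a one-edge splitting of $G$.

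The first step is to show that at least one $\Gamma_U$ has an edge. Otherwise every $G/\tilde U$ is a finite $p$-group, and by Theorem \ref{finite group} fixes a vertex of $\tilde U\backslash T$. The non-empty closed sets $A_U\subseteq V(\tilde U\backslash T)$ of $G$-fixed vertices form an inverse system of non-empty compact Hausdorff spaces, so $\varprojlim A_U\neq\emptyset$. A point of this limit corresponds to $v\in V(T)$ with $gv\in\tilde Uv$ for every $g\in G$ and every $U$, whence, using $\bigcap_U\tilde U=1$ and that $G_v$ is closed, $G\subseteq\bigcap_U\tilde UG_v=G_v$, contradicting the hypothesis.

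Fix $U_0$ together with an edge $e_0\in E(\Gamma_{U_0})$. Since each $\nu_{UU_0}$ is a composition of edge-collapses and so is bijective on its uncollapsed edges, there is for every $U\leq U_0$ a unique $e_U\in E(\Gamma_U)$ with $\nu_{UU_0}(e_U)=e_0$, and the family $(e_U)$ is compatible through the inverse system. Collapsing in $\Gamma_U$ every edge other than $e_U$ yields a one-edge graph of finite $p$-groups $(\G'_U,\Gamma'_U)$ with the same fundamental group $G/\tilde U$, edge group $C_U=\G_U(e_U)$, and vertex groups that are fundamental pro-$p$ groups of the graphs of finite $p$-groups on either side of $e_U$. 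Since $(\G_U,\Gamma_U)$ is reduced, the inclusions of $C_U$ into these vertex groups are strict, so $G/\tilde U$ is presented non-trivially as $A_U*_{C_U}B_U$ or $\mathrm{HNN}(A_U,C_U,t_U)$. Since $\Gamma'_V$ and $\Gamma'_U$ both arise from $\Gamma_V$ by collapsing every edge except $e_V$, they are canonically identified, so the amalgam-versus-HNN dichotomy is uniform in $U$; moreover $\pi_{VU}$ sends $A_V,B_V,C_V$ into $A_U,B_U,C_U$ respectively.

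Finally, let $T'_U$ denote the standard pro-$p$ tree of $(\G'_U,\Gamma'_U)$. The $T'_U$ form an inverse system of pro-$p$ trees compatible with the $G$-actions, and $T'=\varprojlim T'_U$ is a pro-$p$ tree on which $G$ acts with quotient a single edge. By \cite[Proposition 3.10.4 and Theorem 6.6.1]{R 2017} (or Lemma \ref{embedding}), $G$ is the pro-$p$ fundamental group of the corresponding one-edge graph of groups, giving $G=A*_CB$ or $G=\mathrm{HNN}(A,C,t)$ with $C=\varprojlim C_U$. The main remaining obstacle is to identify this abstract edge group $C$ with the stabiliser of an edge of the original tree $T$; I expect to do this by using the finiteness of the $C_U$ and Theorem \ref{finite group} to lift the family $(e_U)$ through the projections $T\to\tilde U\backslash T$, then extracting by compactness an edge $\tilde e\in E(T)$ whose $G$-stabiliser is exactly $C$.
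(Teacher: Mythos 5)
Your overall strategy is the paper's own: write $G=\varprojlim_U G/\tilde U$ via Lemma \ref{inverse limit decomposition}, rule out the degenerate case where every $\Gamma_U$ is a single vertex using Theorem \ref{finite group} plus a compactness argument, follow one edge $e_0$ backwards through the collapse maps $\nu_{VU}$ (using that a collapse is bijective on uncollapsed edges), split each $G/\tilde U$ along that edge, and pass to the limit. The steps you actually carry out are correct, and in places you are more explicit than the paper — e.g.\ the observation that the amalgam-versus-HNN dichotomy is constant along the tower because a collapse cannot merge the two components of $\Gamma_V\setminus\{e_V\}$, whereas the paper's Case 2 instead invokes Lemma \ref{maximal subtree} and produces a multiple HNN-extension over $\Gamma_V\setminus D_V$. (Minor quibbles: the collapsed object $(\G'_U,\Gamma'_U)$ is a one-edge graph of \emph{virtually free} pro-$p$ groups, not of finite $p$-groups; and non-triviality of the splitting must also be checked in the limit, which needs the surjectivity of the transition maps on edge groups guaranteed by $\eta_{VU}(\G_V(m))=\pi_{VU}(\G_V(m))$.)

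The genuine gap is the step you yourself flag as an "obstacle": identifying $C=\varprojlim C_U$ with $G_{\tilde e}$ for an actual edge $\tilde e\in E(T)$. Your sketch — "lift the family $(e_U)$ through the projections $T\to\tilde U\backslash T$" — does not parse as written, because $e_U$ is an edge of the abstract graph $\Gamma_U$ coming from the virtually-free decomposition of $G/\tilde U$ in \cite{HZ13}; it is not an edge of $\tilde U\backslash T$ or of $G\backslash T$, so there is no family of edges of quotients of $T$ to lift. The available route is group-theoretic rather than graph-theoretic: $C_U$ is (a conjugate of) an intersection of two maximal finite subgroups of $G/\tilde U$, each of which fixes a vertex of the pro-$p$ tree $\tilde U\backslash T$ by Theorems \ref{finite group} and \ref{finite subgroups}, so $C_U$ pointwise fixes the geodesic between those vertices (\cite[Corollary 3.8]{horizons}); one must then argue that these fixed sets contain edges and form a compatible inverse system to extract $\tilde e$ with $C\leq G_{\tilde e}$ — and even then the theorem demands the reverse inclusion $G_{\tilde e}\leq C$, which nothing in your setup delivers. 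To be fair, the paper's own proof is equally laconic at exactly this point ("\dots for some $e\in E(T)$"), so you have put your finger on the thinnest part of the published argument; but as it stands your text does not close it.
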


\begin{proof} By Lemma \ref{inverse limit decomposition}
  $G=\varprojlim_{U\triangleleft_o G} G/\tilde U$, where $G/\tilde
  U=\Pi_1(\G_U,\Gamma_U)$ is the fundamental group of a finite reduced
  graph of  finite $p$-groups and for each $V\triangleleft_o G$
  contained in $U$, one has a natural morphism $(\eta_{VU},\nu_{VU}):(\G_V, \Gamma_V)\longrightarrow (\G_U, \Gamma_U)$ such that $\nu_{VU}$ is just a collapse of edges of $\Gamma_V$. Moreover, the induced  homomorphism of the
pro-$p$ fundamental groups coincides with the canonical projection
$\pi_{VU}\colon G/\tilde V\longrightarrow G/\tilde U$. 

Note that $U/\tilde U$ is non-trivial for some $U$, since otherwise $G/\tilde U$ is finite for every $U$ and  by Theorem \ref{finite group} stabilizes a vertex $v_U$; hence by an inverse limit argument $G$ would stabilize a vertex $v$ in $T$ contradicting the hypothesis. Hence
 $\Gamma_U$ contains at least one edge.

\smallskip
Case 1. There exists $U$ and an  edge $e_U$ in $\Gamma_U$ such that $\Gamma_U\setminus \{e_U\}$ is disconnected.  

Let $e_V$ be an edge of $\Gamma_V$ such that $\nu_{VU}(e_V)=e_U$. Since
$\Gamma_U$ is obtained from $\Gamma_V$ by collapsing edges,
$\Gamma_V\setminus \{e_V\}$ is disconnected as well. Thus we may write
$G_V=A_V\amalg_{\G_V(e_V)} B_V$, where $A_V$, $B_V$ are the fundamental
groups of the graphs of groups $(\G_V,\Gamma_V)$ restricted to the connected components of $\Gamma_V\setminus \{e_V\}$, and we have an inverse limit of free amalgamated products that gives a decomposition $G=A\amalg_{\G(e)} B$ for some $e\in E(T)$, with $A= \varprojlim_V A_V$, $B=\varprojlim_V B_V$.

\smallskip
Case 2.  For all $U$ and each edge $e_U$ of $\Gamma_U$ the graph
$\Gamma_U\setminus \{e_U\}$ is connected. 

By Lemma  \ref{maximal subtree} (applied inductively) the preimage in
$\Gamma_V$ of a maximal subtree $D_U$ of $\Gamma_U$ is a maximal subtree
$D_V$ of $\Gamma_V$. Therefore for each $V$ we have  $$G/\tilde V={\rm
  HNN}(L_V,\G_V(e), t_e, e\in \Gamma_V\setminus D_V),$$ where
$L_V=\Pi_1(\G_V, D_V)$. Note that the image of $\tilde G$ in $G/\tilde
V$ is $\widetilde{G/\tilde V}$ and  since $G$ is finitely generated, so
is $G/\tilde G$. Therefore, by Proposition \ref{modulo stabilizers},
$\pi_1(\Gamma_V)=F(\Gamma_V\setminus D_V)$ is a free pro-$p$ group of
rank  $|\Gamma_V\setminus D_V|\leq rank(G/\tilde G)$, i.e. we can assume that $\Gamma_V\setminus D_V$ is a constant set $E$.  Then we can view $E$ as a finite subset of $E(T)$ and putting $L=\varprojlim_V L_V$ we have  $G={\rm HNN}(L, G_e, t_e, e\in E)$ for some $e\in E(T)$ as required.

\end{proof}

\begin{cor}\vlabel{lower bound} With the hypotheses of Theorem \ref{splitting}, if $G/\tilde U=\Pi_1(\G_U, \Gamma_U)$ is
  the fundamental group of a reduced finite graph of finite $p$-groups
  as in Lemma \ref{inverse limit decomposition}, then $G$ splits as the pro-$p$ fundamental group of a reduced finite graph of pro-$p$ groups $G=\Pi_1(\G, \Gamma_U)$ with edge groups being stabilizers of some edges  of $T$.

\end{cor}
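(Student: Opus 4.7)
The plan is to adapt the inverse-limit construction from Theorem \ref{splitting}, now producing a decomposition over the full graph $\Gamma_U$ rather than a single edge or a bouquet of loops. By Lemma \ref{inverse limit decomposition}, for every open normal subgroup $V\triangleleft_o G$ with $V\leq U$ we have $G/\tilde V=\Pi_1(\G_V,\Gamma_V)$, and the connecting map $\nu_{VU}:\Gamma_V\to\Gamma_U$ is a sequence of edge-collapses. The crucial structural consequence is that for each vertex $v\in V(\Gamma_U)$ the preimage $\Omega_V^v:=\nu_{VU}^{-1}(v)$ is a finite subtree of $\Gamma_V$, whereas for each edge $e\in E(\Gamma_U)$ the preimage $e_V:=\nu_{VU}^{-1}(e)$ is a single edge of $\Gamma_V$.

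I would then coarsen each $(\G_V,\Gamma_V)$ into a graph of finite $p$-groups $(\G_V^U,\Gamma_U)$ on the smaller graph $\Gamma_U$, by amalgamating each subtree $\Omega_V^v$ into one vertex group. Concretely, set $\G_V^U(v):=\Pi_1(\G_V|_{\Omega_V^v},\Omega_V^v)$, an iterated amalgamated free pro-$p$ product of finite $p$-groups over the finite tree $\Omega_V^v$ and hence itself a finite $p$-group, and set $\G_V^U(e):=\G_V(e_V)$. Choosing a maximal subtree $D_V$ of $\Gamma_V$ that contains every collapsed edge and that lifts a maximal subtree $D_U$ of $\Gamma_U$, one checks directly from the presentation \eqref{presentation} that $G/\tilde V\cong\Pi_1(\G_V^U,\Gamma_U)$ canonically, since the collapsed generators and their relations now sit inside the new vertex groups. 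The morphisms $(\eta_{V'V},\nu_{V'V})$ of Lemma \ref{inverse limit decomposition} restrict to compatible morphisms $(\G_{V'}^U,\Gamma_U)\to(\G_V^U,\Gamma_U)$ that are the identity on $\Gamma_U$, so taking inverse limits yields a graph of pro-$p$ groups $(\G,\Gamma_U)$ with $\G(v):=\varprojlim_V\G_V^U(v)$ and $\G(e):=\varprojlim_V\G_V^U(e)$; passing to the inverse limit in the presentations identifies $G=\varprojlim_V G/\tilde V$ with $\Pi_1(\G,\Gamma_U)$.

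It remains to verify that $(\G,\Gamma_U)$ is reduced and that each $\G(e)$ is a stabilizer of an edge of $T$. For reduceness, the natural projections $\G(v)\to\G_U(v)$ and $\G(e)\to\G_U(e)$ commute with the attaching maps, so if some $\partial_i:\G(e)\to\G(d_i(e))$ were an isomorphism at a non-loop edge $e$, the induced map $\partial_i:\G_U(e)\to\G_U(d_i(e))$ would be surjective and, by properness, injective, contradicting the reduceness of $(\G_U,\Gamma_U)$. The edge-stabilizer assertion is handled exactly as in the inverse-limit step of Theorem \ref{splitting}, Case 1: each $\G_V(e_V)$ is the stabilizer in $G/\tilde V$ of an edge of the standard tree of $(\G_V,\Gamma_V)$, and the compatibility of these standard trees with $\tilde V\backslash T$ lifts the inverse limit $\varprojlim_V\G_V(e_V)$ to an actual edge stabilizer $G_{\tilde e}$ for some $\tilde e\in E(T)$. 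This last identification is the principal technical obstacle, but it is no worse than in the corresponding step of the proof of Theorem \ref{splitting}.
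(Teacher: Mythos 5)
Your architecture is genuinely different from the paper's. The paper proves this corollary by induction on $|E(\Gamma_U)|$: it picks one edge $e_U$, invokes the proof of Theorem \ref{splitting} to split $G$ as $A*_{G_e}B$ (or an HNN-extension) with $\pi_U(A)$, $\pi_U(B)$ the fundamental groups of the restrictions of $(\G_U,\Gamma_U)$ to the components of $\Gamma_U\setminus\{e_U\}$, and then applies the induction hypothesis to $A$ and $B$. You instead rebuild the whole decomposition at once by coarsening each $(\G_V,\Gamma_V)$ down to a graph of groups over the fixed graph $\Gamma_U$ and passing to the inverse limit -- closer in spirit to the paper's proof of Theorem \ref{General} than to its proof of this corollary. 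Your structural observations (preimages of vertices are finite subtrees, preimages of edges are single edges, $\nu_{VU}^{-1}(D_U)$ is a maximal subtree containing all collapsed edges) are correct, and both you and the paper defer the identification of the limit edge groups with actual stabilizers $G_{\tilde e}$, $\tilde e\in E(T)$, to the corresponding step in Theorem \ref{splitting}.

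There is, however, one concretely false step: the assertion that $\G_V^U(v)=\Pi_1(\G_V|_{\Omega_V^v},\Omega_V^v)$ is ``hence itself a finite $p$-group,'' and consequently that $(\G_V^U,\Gamma_U)$ is a graph of \emph{finite} $p$-groups. The edges of $\Omega_V^v$ are fictitious only for the pushed-forward graph of groups $(\G_{VU},\Gamma_V)$ with groups $\pi_{VU}(\G_V(x))$; in $(\G_V,\Gamma_V)$ itself, which Lemma \ref{inverse limit decomposition} guarantees to be \emph{reduced}, every non-loop edge (hence every edge of the tree $\Omega_V^v$) has $\partial_i$ a proper embedding at both ends. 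So $\Pi_1(\G_V|_{\Omega_V^v},\Omega_V^v)$ is an iterated amalgamated free pro-$p$ product with proper amalgamations, which is an \emph{infinite} (virtually free) pro-$p$ group as soon as $\Omega_V^v$ contains an edge -- already $C_p\amalg C_p$ is infinite. This does not destroy your strategy, because nothing downstream truly requires finiteness: the coarsened vertex groups are still finitely generated pro-$p$ subgroups of $G/\tilde V$ (by properness of $(\G_V,\Gamma_V)$ and \cite[Proposition 3.10.4]{R 2017}), the connecting maps are still surjective, and the commutation of $\Pi_1(-,\Gamma_U)$ with surjective inverse limits over the fixed finite graph $\Gamma_U$ still goes through (the condition $N\cap\G_V^U(v)$ open in the presentation \eqref{presentation} is automatic for finitely generated vertex groups). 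But as written the claim is wrong and must be removed, and the inverse-limit step must then be justified for infinite pro-$p$ vertex groups rather than quoted as a finite-group fact. A minor additional quibble: in your reducedness argument, injectivity of the induced $\partial_i\colon\G_U(e)\to\G_U(d_i(e))$ is part of the definition of a graph of groups (the $\partial_i$ are monomorphisms), not a consequence of properness.
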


\begin{proof} We use induction on the size of $\Gamma_U$. Let
  $\pi_U:G\longrightarrow G/\tilde U$ be the natural projection. Pick
  $e_U\in E(\Gamma_U)$. If $\Gamma_U\setminus \{e_U\}=\Delta \cupdot
  \Omega$ is disconnected with two connected components $\Delta$ and
  $\Omega$, then  from the proof of Theorem \ref{splitting} it follows that
  $G$ splits as an amalgamated free product $A\amalg_{G_e}B$ with
  $\pi_U(G_e)=\G_U(e_U)$, where  $\pi_U(A)$ and $ \pi_U(B)$ are the fundamental groups of reduced graphs of groups $(\G_U, \Delta)$ and $(\G_U, \Omega)$ that are restrictions of $(\G_U, \Gamma_U)$ to these connected components. Hence from the induction hypothesis $A=\Pi_1(\G,\Delta)$, $B=\Pi_1(\G, \Omega)$ and the result follows.

 If $\Gamma_U\setminus \{e_U\}$ is connected then again from  the proof
 of Theorem \ref{splitting} it follows that $G_U$ splits as an
 HNN-extension $G_U={\rm HNN}(L,\G_U(e), t_e, e\in \Gamma_U\setminus
 D_U)$, where $D_U$ is a maximal subtree of $\Gamma_U\setminus \{e_U\}$
 and 
 $\pi_U(G_e)=\G_U(e_U)$, $\pi_U(L)=\pi_1(\G_U, D_U)$. Then by induction hypothesis $L=\Pi_1(\G, D_U)$  and $G=\Pi_1(\G, \Gamma_U)$ as needed.
 
 Finally we observe that $(\G,\Gamma_U)$ is reduced since $(\G_U, \Gamma_U)$ is.

\end{proof}

A.A. Korenev \cite{korenev} defined the number of pro-$p$ ends $e(G)$ for an infinite pro-$p$ group $G$ as $e(G)=1+dim H^1(G, \F_p[[G]])$. The next theorem shows that similar to the abstract case a pro-$p$ group acting irreducibly on an infinite  pro-$p$ tree with finite edge stabilizers has more than one end. 

\begin{thm}\label{number of ends}  Let $G$ be a finitely generated pro-$p$ group acting   on a pro-$p$ tree $T$ with finite edge stabilizers and without   global fixed points.   Then either $G$ is virtually cyclic and $H^1(G, \F_p[[G]])\cong \F_p$ (i.e. $G$ has two ends) or  $H^1(G, \F_p[[G]])$ is infinite  (i.e. $G$ has infinitely many ends).\end{thm}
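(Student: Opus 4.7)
The plan is to derive the dichotomy directly from Theorem \ref{splitting} by a Mayer-Vietoris argument in $\F_p[[G]]$-cohomology. By Theorem \ref{splitting}, $G$ splits non-trivially either as a free amalgamated pro-$p$ product $G = G_1 *_C G_2$ or as a pro-$p$ HNN-extension $G = {\rm HNN}(G_1, C, t)$, where $C$ is an edge stabiliser and hence a finite $p$-group; moreover $G$ itself is infinite, since a finite $p$-group acting on a pro-$p$ tree fixes a vertex by Theorem \ref{finite group}. Working first in the amalgamated case (the HNN case is analogous), the $G$-action on the standard pro-$p$ tree of the splitting furnishes a short exact sequence of $\F_p[[G]]$-modules
\begin{equation*}
0 \longrightarrow \F_p[[G/C]] \longrightarrow \F_p[[G/G_1]] \oplus \F_p[[G/G_2]] \longrightarrow \F_p \longrightarrow 0,
\end{equation*}
to which I apply $\mathrm{Ext}^*_{\F_p[[G]]}(-, \F_p[[G]])$; combined with the profinite Shapiro lemma $\mathrm{Ext}^n_{\F_p[[G]]}(\F_p[[G/H]], \F_p[[G]]) \cong H^n(H, \F_p[[G]])$, this produces the Mayer-Vietoris long exact sequence
\begin{equation*}
\cdots \to H^{n-1}(C, \F_p[[G]]) \to H^n(G, \F_p[[G]]) \to H^n(G_1, \F_p[[G]]) \oplus H^n(G_2, \F_p[[G]]) \to H^n(C, \F_p[[G]]) \to \cdots .
\end{equation*}

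Next I evaluate the terms involving the finite edge group $C$. Since $C$ is a finite $p$-group, $\F_p[[G]]$ is a free $\F_p[[C]]$-module on any continuous transversal of $C\backslash G$, so $H^n(C, \F_p[[G]]) = 0$ for every $n \geq 1$ and $H^0(C, \F_p[[G]]) \cong \F_p[[C\backslash G]]$ has infinite $\F_p$-dimension. The same argument gives $H^0(G_i, \F_p[[G]]) = 0$ when $G_i$ is infinite and $H^0(G_i, \F_p[[G]]) \cong \F_p[[G_i\backslash G]]$ when $G_i$ is finite, and identifies the map $H^0(G_i, \F_p[[G]]) \to H^0(C, \F_p[[G]])$ with the pullback along $C\backslash G \twoheadrightarrow G_i\backslash G$. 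If some vertex group in the splitting is infinite, then the $n=1$ piece of the Mayer-Vietoris sequence shows that the cokernel of $H^0(G_1, \F_p[[G]]) \oplus H^0(G_2, \F_p[[G]]) \to H^0(C, \F_p[[G]])$ embeds into $H^1(G, \F_p[[G]])$; a cosetwise computation shows that this cokernel has infinite $\F_p$-dimension (each coset $G_ig$ contributes dimension at least $[G_i:C] - 1 \geq p-1$, and there are infinitely many such cosets), so $H^1(G, \F_p[[G]])$ is infinite in this sub-case.

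If instead every vertex group in the splitting is finite, then $G$ is the fundamental pro-$p$ group of a finite graph of finite $p$-groups, and so by the Herfort-Zalesskii theorem \cite{HZ13} (already invoked in Lemma \ref{inverse limit decomposition}) $G$ contains an open free pro-$p$ subgroup $F$ of some rank $r \geq 1$. If $r = 1$ then $G$ is virtually $\mathbb{Z}_p$, hence virtually cyclic, and the Lyndon-Hochschild-Serre spectral sequence for $F \triangleleft_o G$ combined with the standard computation $H^1(\mathbb{Z}_p, \F_p[[\mathbb{Z}_p]]) \cong \F_p$ (using that a finite $p$-group acts trivially on the one-dimensional $\F_p$) yields $H^1(G, \F_p[[G]]) \cong \F_p$. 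If $r \geq 2$, then the standard free resolution of $\F_p$ over $\F_p[[F]]$ computes $H^1(F, \F_p[[F]])$ as an infinite-dimensional $\F_p$-space, and Shapiro transfers this to $H^1(G, \F_p[[G]])$. The main obstacle I anticipate is checking infinite-dimensionality of the various cokernels as honest discrete $\F_p$-vector spaces rather than merely as profinite $\F_p$-modules, and tracking the $G/F$-action in the Lyndon-Hochschild-Serre collapse in the virtually cyclic sub-case; these are the technical cores of the argument.
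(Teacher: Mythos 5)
Your proposal is correct, but it takes a genuinely different route from the paper. The paper also starts from Theorem \ref{splitting}, but then passes to an open normal subgroup $H$ meeting the edge stabilizer trivially, writes $H$ as a non-trivial free pro-$p$ product via Theorem \ref{trivial stabilizers}, shows that the augmentation ideal $I(H)$ decomposes as an $\F_p[[H]]$-module, and dualizes the augmentation sequence to conclude (using that $\F_p[[H]]$ is local, hence indecomposable) only that $H^1(H,\F_p[[H]])\neq 0$; the full dichotomy is then outsourced to Korenev's Theorems 1 and 2 (the $0/1/\infty$ trichotomy for $\dim H^1(G,\F_p[[G]])$ and the identification of the one-dimensional case with virtually cyclic groups) together with his Lemma 2 on invariance under open subgroups. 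You instead run Mayer--Vietoris directly on the splitting of $G$, evaluate the $H^0$-terms of the finite edge group explicitly, and split into cases according to whether a vertex group of the splitting is infinite, handling the all-finite case by reducing to virtually free pro-$p$ groups. What your route buys is independence from Korenev's trichotomy theorems: you exhibit the infinite-dimensionality of $H^1(G,\F_p[[G]])$ directly from the cokernel of $H^0(G_1)\oplus H^0(G_2)\to H^0(C)$. What it costs is more bookkeeping, and it does not avoid Korenev entirely: the vanishing $H^0(G_i,\F_p[[G]])=0$ for infinite $G_i$ requires the identification $\F_p[[G]]|_{G_i}\cong\prod_I\F_p[[G_i]]$ and commutation of $H^0$ with products (exactly the computation the paper performs inside the proof of Theorem \ref{accessible-2}), and the step you label ``Shapiro transfers this to $H^1(G,\F_p[[G]])$'' in the rank $\geq 2$ sub-case is really the commensurability invariance $H^1(F,\F_p[[F]])\cong H^1(G,\F_p[[G]])$ for $F$ open in $G$, i.e.\ Lemma 2 of \cite{korenev}, not bare Shapiro; you should cite it as such. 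Note also that in the all-finite-vertex-groups case you only need the easy direction that the fundamental group of a finite graph of finite $p$-groups is virtually free pro-$p$ (an open normal subgroup avoiding the vertex groups acts freely on the standard tree), not the deep converse of \cite{HZ13}. With those citations repaired, your argument is complete and self-contained where the paper's is not.
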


\begin{proof} By Theorem \ref{splitting} $G$ splits either as an
  amalgamated free pro-$p$ product or an HNN-extension over an edge
  stabilizer  $G_e$ and so acts on the standard pro-$p$ tree $T(G)$
  associated with this splitting. Let $H$ be an open normal subgroup of $G$  intersecting
  $G_e$ trivially. Then $H$ acts on $T(G)$ with trivial edge stabilizers and so by Theorem \ref{trivial stabilizers}  $H$ is a non-trivial free pro-$p$ product
  $H=H_1\amalg H_2$. 

Then we have the following exact sequence (associated to the standard pro-$p$ tree) for this free product decomposition:

$$0\rightarrow \F_p[[H]]\xrightarrow{\delta}{} \F_p[[H/H_1]]\oplus \F_p[[H/H_2]]\xrightarrow{\epsilon} \F_p\rightarrow 0 \eqno(*)$$

\medskip
{\bf Claim.} The augmentation ideal $I(H)$ is decomposable as an $\F_p[[H]]$-module.

\smallskip
Proof. Let $M_1$ and $M_2$ be the kernels of the restrictions of
$\epsilon$  to  $\F_p[[H/H_1]]$ and $\F_p[[H/H_2]]$ respectively. We
will show that $\delta(I(H))=M_1\oplus M_2$. Since
$\delta(\F_p[[H]])=\ker(\epsilon)$, $M_1\oplus M_2$ is a submodule of
$\delta(\F_p[[H]])$ and since the middle term of $(*)$ modulo $M_1\oplus
M_2$ is $\F_p\oplus \F_p$, it is of index $p$ in $\ker(\epsilon)$. But $\F_p[[H]]$ is a local ring and so has a unique maximal left ideal, hence $\delta(I(H))=M_1\oplus M_2$ as needed. The claim is proved.

\bigskip

 Now applying $Hom_{\F_p[[H]]}(-,\F_p[[H]])$ to 
 $$0\rightarrow I(H)\rightarrow \F_p[[H]]\rightarrow \F_p\rightarrow 0 $$
and observing that by \cite[Lemma
 3]{korenev}$$Hom_{\F_p[[H]]}(\F_p,\F_p[[H]])=(\F_p[[H]])^H =0,\ \  Hom_{\F_p[[H]]}(\F_p[[H]],\F_p[[H]])=\F_p[[H]]$$ and $Ext^1_{\F_p[[H]]}(\F_p[[H]],M)=0$ since $\F_p[[H]]$ is a free pro-$p$ module, we obtain the exact sequence

$$0\rightarrow \F_p[[H]]\xrightarrow{\varphi}{} Hom_{\F_p[[H]]}(I(H), \F_p[[H]])\rightarrow H^1(H,\F_p[[H]])\rightarrow 0.$$

(Here we also use that $Ext^1_{\F_p[[H]]}(\F_p, M)=H^1(H,M)$ for an
$\F_p[[H]]$-module $M$). Since $\F_p[[H]]$ is indecomposable and $$Hom_{\F_p[[H]]}(I(H),
\F_p[[H]])\cong Hom_{\F_p[[H]]}(M_1, \F_p[[H]])\oplus
Hom_{\F_p[[H]]}(M_2, \F_p[[H]])$$ (from the claim), $\varphi$ is not onto and so $H^1(H,\F_p[[H]])\neq 0$.

Then by \cite[Theorems 1,2]{korenev}, the dimension of $H^1(H,\F_p[[H]])$ is either infinite or 1 and in the latter case $H$ is virtually cyclic. By \cite[Lemma 2]{korenev}, $H^1(H,\F_p[[H]])\cong H^1(G,\F_p[[G]])$, hence the result.

\end{proof}  
  
\section{Subgroups of fundamental groups of graphs of pro-$p$ groups}  

In  the classical Bass-Serre theory of groups acting on trees  a finitely generated group $G$ acting on a tree $T$ is the fundamental group of
a finite graph of groups whose edge and vertex groups are $G$-stabilizers of
edges and vertices of $T$ respectively. This is due to the fact that for finitely generated $G$ there exists a $G$-invariant subtree $D$ such that $G\backslash D$ is finite.  In the pro-$p$ situation this is not always the case. Note that $G\backslash D$  finite implies that there are only finitely many maximal stabilizers of vertices of $T$ in $G$ up to conjugation. In
this section we prove  a result mentioned above in the pro-$p$ case  under the assumption of finitely many maximal vertex stabilizers up to conjugation.

\begin{thm}\vlabel{General} 
Let $G$ be a finitely generated  pro-$p$ group acting
on a pro-$p$ tree $T$ with finitely many maximal vertex stabilisers up to conjugation.
 Then  $G$ is  the fundamental group of a reduced finite graph
of  pro-$p$ groups $(\G,\Gamma)$, where  each vertex
group $\G(v)$ and each edge group $\G(e)$ is  a maximal vertex stabilizer $G_{\tilde v}$ and  an edge stabilizer
$G_{\tilde e}$ respectively (for some $\tilde v,\tilde e\in T$). 

\end{thm}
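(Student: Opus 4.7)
The plan is to combine Corollary \ref{lower bound} with an inverse limit argument, using the hypothesis on maximal vertex stabilisers to bound combinatorial complexity and to identify the limit vertex groups as maximal vertex stabilisers. First I would establish a uniform bound on $|V(\Gamma_U)|$ in the inverse system from Lemma \ref{inverse limit decomposition}. By Theorem \ref{finite subgroups} the vertex groups $\G_U(v)$ of $G/\tilde U = \Pi_1(\G_U,\Gamma_U)$ are the maximal finite subgroups of $G/\tilde U$ up to conjugation. Any finite subgroup of $G/\tilde U$ fixes a vertex of the pro-$p$ tree $\tilde U\backslash T$ by Theorem \ref{finite group}, hence lies inside $G_w\tilde U/\tilde U$ for some $w\in T$; since $U\cap G_w$ is open in $G_w$ and contained in $\tilde U$, this quotient is finite. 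Enlarging $w$ so that $G_w$ is a maximal vertex stabiliser shows every maximal finite subgroup of $G/\tilde U$ is conjugate to $G_{\tilde v_i}\tilde U/\tilde U$ for some representatives $G_{\tilde v_1},\dots,G_{\tilde v_k}$ of the $G$-conjugacy classes of maximal vertex stabilisers. Hence $|V(\Gamma_U)|\le k$, and Proposition \ref{modulo stabilizers} gives $|E(\Gamma_U)|-|V(\Gamma_U)|+1\le d(G)$, so $|E(\Gamma_U)|$ is bounded too.

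The maps $\nu_{VU}\colon \Gamma_V\to\Gamma_U$ from Lemma \ref{inverse limit decomposition} are edge collapses, so $|V(\Gamma_V)|\ge|V(\Gamma_U)|$; the uniform bound forces stabilisation, yielding $U_0\triangleleft_o G$ with $\nu_{VU_0}$ an isomorphism for every $V\le U_0$. Passing to a cofinal subsystem I assume $\Gamma_V=\Gamma$ is constant. Applying Corollary \ref{lower bound} at each $V$ gives $G=\Pi_1(\G_V^*,\Gamma)$ with edge groups already equal to edge stabilisers $G_{\tilde e_V}$ in $T$. Using the finiteness of $E(\Gamma)$ and a further cofinal refinement, I may assume each edge $\tilde e\in E(T)$ realising a given edge group is constant in $V$. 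Taking the inverse limit produces a graph of pro-$p$ groups $(\G,\Gamma)$ with $G=\Pi_1(\G,\Gamma)$ and $\G(e)=G_{\tilde e}$.

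For the vertex groups $\G(v)=\varprojlim_V \G_V^*(v)$, each $\G_V^*(v)$ maps onto $\G_V(v)=G_{\tilde v_V}\tilde V/\tilde V$ for some maximal vertex stabiliser $G_{\tilde v_V}$, but only the $G$-conjugacy class of $\G_V^*(v)$ is determined. Using the finiteness of maximal vertex stabilisers up to conjugation, I would fix one $\tilde v\in T$ per vertex of $\Gamma$ (compatible with the chosen incident $\tilde e$'s via $G_{\tilde e}\le G_{\tilde v}$) and, exploiting the conjugation freedom in the Theorem \ref{splitting} / Corollary \ref{lower bound} constructions, choose representative splittings so that $\G_V^*(v)=G_{\tilde v}$ for all $V\le U_0$. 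Since $\bigcap_V\tilde V=1$ and $\G_V^*(v)\tilde V=G_{\tilde v}\tilde V$, the inverse limit then yields $\G(v)=G_{\tilde v}$, a maximal vertex stabiliser; a final reduction, which does not alter the fundamental group, gives the desired reduced graph of pro-$p$ groups. The main obstacle is this last compatibility step: arranging the vertex group representatives genuinely (not merely up to conjugation) across the inverse system, which requires careful threading of the Theorem \ref{splitting} splittings.
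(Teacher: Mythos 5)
Your first two steps (bounding $|V(\Gamma_U)|$ by the number of conjugacy classes of maximal vertex stabilizers via Theorems \ref{finite group} and \ref{finite subgroups}, bounding $|E(\Gamma_U)|$ via Proposition \ref{modulo stabilizers}, and then stabilizing $\Gamma_U$ to a constant graph $\Gamma$ along a cofinal subsystem) are exactly the paper's argument. The endgame, however, diverges and contains a genuine gap --- one you name yourself but do not close. Applying Corollary \ref{lower bound} separately at each level $V$ produces, for each $V$, a full decomposition $G=\Pi_1(\G^*_V,\Gamma)$ of $G$ itself, and these decompositions for different $V$ are a priori unrelated: there is no map between them, so ``taking the inverse limit'' of them is not defined, and the ``conjugation freedom'' you invoke only lets you conjugate an entire decomposition at once, not adjust each vertex group independently to a prescribed representative $G_{\tilde v}$. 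Worse, the vertex groups produced by Corollary \ref{lower bound} at a fixed level $V$ are generally much larger than vertex stabilizers (in the degenerate case $|\Gamma|=1$ the single vertex group is all of $G$), so no single application of that corollary, however normalized, yields the theorem; the passage to the limit over $V$ is essential, and that is precisely where your construction has nothing to pass to the limit along. The auxiliary claim that a cofinal refinement makes the realizing edge $\tilde e\in E(T)$ constant is also unjustified, since $E(T)$ is an infinite profinite space and the set of edges realizing a given edge group need not be finite.

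The paper avoids all of this by never leaving the compatible inverse system supplied by Lemma \ref{inverse limit decomposition}: after $\Gamma$ has stabilized, the morphisms $\eta_{VU}$ already satisfy $\eta_{VU}(\G_V(m))=\pi_{VU}(\G_V(m))=\G_U(m)$ for every $m\in\Gamma$, and \cite[Corollary 3.3]{WZ} guarantees only finitely many isomorphism classes of reduced graphs of finite groups over $\Gamma$ occur, so one may extract a genuinely compatible cofinal subsystem and define $\G(x)=\varprojlim_U\G_U(x)$ inside it. The identification $\G(v)=G_{\tilde v}$ then comes for free: the image of $\G(v)$ in $G/\tilde U$ is a maximal finite subgroup, hence coincides (up to the conjugation already absorbed into the system) with $G_{\tilde v}\tilde U/\tilde U$ for every $U$, and since both $\G(v)$ and $G_{\tilde v}$ are closed and $\bigcap_U\tilde U=1$, they are equal; similarly for edges. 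If you want to salvage your write-up, replace the appeal to Corollary \ref{lower bound} by this direct inverse-limit construction --- the ``threading'' you identify as the main obstacle is exactly what Lemma \ref{inverse limit decomposition} was designed to provide, and Corollary \ref{lower bound} discards it.
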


\begin{proof}   By Lemma \ref{inverse limit decomposition}, $G=\varprojlim_{U\triangleleft_o G} G/\tilde U$, where $G/\tilde U=\Pi_1(\G_U,\Gamma_U)$ is the fundamental group of a finite reduced graph of  finite $p$-groups and for each $V\leq U$ one has a natural morphism $(\eta_{VU},\nu_{VU}):(\G_V, \Gamma_V)\longrightarrow (\G_U, \Gamma_U)$ such that $\nu$ is just a collapse of edges of $\Gamma_V$. Moreover, the induced  homomorphism of the
pro-$p$ fundamental groups coincides with the canonical projection
$\pi_{VU}\colon G/\tilde V\longrightarrow G/\tilde U$.

We claim now that the number of vertices and edges of $\Gamma_U$ is
bounded independently of $U$. Let $G_{v_1}, \ldots, G_{v_n}$ be the maximal vertex
stabilizers of $G$ up to conjugation. Then $G_U$ and $U/\tilde U$ act on $\tilde U\backslash T$; by Proposition \ref{modulo stabilizers}, the quotient group $U/\tilde U$ acts freely on the
pro-$p$ tree $\tilde U\backslash T$. Thus all vertex stabilizers of
$G/\tilde U$ are finite and are the images of the corresponding vertex stabilizers of $G$. Note that any finite subgroup of $G/\tilde U$ stabilizes  a vertex (Theorem \ref{finite group}) and since the maximal finite subgroups of $G_U$ are exactly the vertex
groups of $(\mathcal{G}_U, \Gamma_U)$ up to conjugation (see
Theorem \ref{finite subgroups}), we see that the number of vertices of $\Gamma_U$ is bounded by $n$. Since $\pi_1(\Gamma_U)\cong G/\tilde G$ is free pro-$p$ 
of rank $|E(\Gamma_U)|-|V(\Gamma_U)|+1$ by Proposition \ref{modulo stabilizers}, the number of edges of  $\Gamma_U$ is bounded by $n+d(G)-1$ and so the size of $\Gamma_U$ is bounded independently of $U$. 
Hence, for some $U$ and all $V\leq U$,  the maps $\nu_{VU}:\Gamma_V\to
\Gamma_U$ are isomorphisms, and we will denote this graph by $\Gamma$.

Then for
$(\G,\Gamma)=\varprojlim\, (\G_U, \Gamma) $ we have $\G(x)=\varprojlim
\G_U(x)$ if $x$ is either a vertex or an edge of $\Gamma$, and $(\G,\Gamma)$
is a reduced finite graph of  pro-$p$ groups
satisfying $G\simeq\Pi_1(\G,\Gamma)$. This finishes the proof of the theorem.

\end{proof}

\medskip\noindent
\begin{cor}\vlabel{finiteness of maximal orbits}  The number up to
  conjugation of maximal vertex stabilizers in $G$  
 equals $|V(\Gamma)|$.\end{cor} 

\begin{proof}  Since $(\G,\Gamma)=\varprojlim_U (\G_U,\Gamma)$, the result follows from Theorem \ref{General}.
   
  \end{proof}

  One of the main consequences of the main theorem of Bass-Serre theory
  is an extension of the Kurosh subgroup theorem to a group $G$ acting
  on tree $T$. Namely if  $H$ is a subgroup of $G$ then
  $H=\pi_1(\H,\Delta)$ is the fundamental group of a graph of groups constructed as follows.  Let $\Delta=H\backslash T$ and  if $\Sigma$ is a connected transversal of $\Delta$ in $T$ then $\H$ consists of stabilizers of the edges and vertices of $\Sigma$. 
 
 In the pro-$p$ situation such a theorem does not hold in general
 (\cite[Theorem 1.2]{HZ archiv}). Our next objective is to prove it
 for $H$ acting acylindrically and having finitely many maximal vertex stabilizers up to conjugation.

\begin{defn}
The action of a pro-$p$ group $G$ on a pro-$p$ tree $T$ is said to be
\emph{$k$-acylindrical}, for $k$ a constant, if for every $g\neq 1$ in $G$,
the subtree $T^g$ of fixed points  has diameter at most $k$. 
\end{defn}

\begin{thm}\vlabel{acylindrical} Let $G$ be a finitely generated  pro-$p$ group acting $n$-acylindrically
on a pro-$p$ tree $T$ with finitely many maximal vertex stabilizers up to conjugation. Then

\begin{enumerate}

\item[(i)] The closure $\overline D$ of $D=\{t\in T\mid G_t\neq 1\}$ is a profinite $G$-invariant subgraph of $T$ having finitely many connected components $\Sigma_i$, $i=1,\ldots, m$ up to translation. 

\item[(ii)] for the  setwise stabilizer      $G_i=Stab_G(\Sigma_i)$ the quotient graph $G_i\backslash \Sigma_i$ has finite diameter and $\Sigma_i$ contains  a  $G_i$-invariant subtree $D_i$ such that $G_i\backslash D_i$ is finite.

\item[(iii)]   $G=\coprod_{i=1}^m G_i\amalg F$ is a free pro-$p$ product, where $F$ is a free pro-$p$ group acting freely on $T$.

\end{enumerate}
\end{thm}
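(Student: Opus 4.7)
The plan is to establish the three parts in sequence, drawing on Theorem \ref{General} as the main structural input for the free product decomposition in (iii).

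For (i), I would first note that $D$ is $G$-invariant and is a subgraph of $T$ (since $G_v\supseteq G_e$ for any endpoint $v$ of an edge $e$), so $\overline D$ is a closed $G$-invariant subgraph. The key observation is that every vertex $v\in D$ lies in the same connected component of $\overline D$ as some maximal-stabilizer vertex: pick $w$ with $G_w$ maximal containing $G_v$; any $1\neq g\in G_v$ fixes both $v$ and $w$, hence fixes the whole geodesic $[v,w]$, so $[v,w]\subseteq D$. Since the hypothesis gives only finitely many $G$-orbits of maximal vertex stabilizers, and the $G$-orbit of a component is determined by a choice of maximal-stabilizer vertex inside it, there are only finitely many $G$-orbits of connected components of $\overline D$.

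For (ii), the $n$-acylindricity bounds the abstract length of such a geodesic $[v,w]$ above by $n$, so every vertex of $\Sigma_i$ lies within distance at most $n$ of some maximal-stabilizer vertex of $\Sigma_i$. The maximal-stabilizer vertices of $\Sigma_i$ fall into at most a fixed finite number of $G_i$-orbits, because if $v,w\in\Sigma_i$ are in the same $G$-orbit with $w=gv$, then $g\Sigma_i$ is a component of $\overline D$ containing $w$, so $g\Sigma_i=\Sigma_i$ and $g\in G_i$. Hence $G_i\backslash\Sigma_i$ is covered by finitely many closed balls of radius $n$ around the images of fixed representatives, and being connected it has finite diameter. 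Proposition \ref{finite diameter quotient} then supplies the $G_i$-invariant subtree $D_i\subseteq\Sigma_i$ with $G_i\backslash D_i$ finite.

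For (iii), I would invoke Theorem \ref{General} to write $G=\Pi_1(\G,\Gamma)$ for a finite reduced graph of pro-$p$ groups whose vertex groups are maximal vertex stabilizers $G_{\tilde v}$ and whose edge groups are edge stabilizers $G_{\tilde e}$. Let $\Gamma'$ be the subgraph of $\Gamma$ formed by edges with non-trivial edge group (together with their endpoints); its connected components $\Gamma'_1,\ldots,\Gamma'_m$ should correspond bijectively to the orbits $\Sigma_1,\ldots,\Sigma_m$, with $\Pi_1(\G|_{\Gamma'_i},\Gamma'_i)=G_i$. Collapsing each $\Gamma'_i$ to a single vertex carrying vertex group $G_i$ produces a new finite graph of pro-$p$ groups with vertex groups $G_1,\ldots,G_m$, all edge groups trivial, and the same fundamental group $G$; by the standard description of fundamental groups of graphs of groups with trivial edge groups, this equals the free pro-$p$ product $G_1\amalg\cdots\amalg G_m\amalg F$, with $F$ free pro-$p$ of rank equal to the number of surviving edges outside a maximal subtree. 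The freeness of the $F$-action on $T$ is then automatic: any nontrivial $f\in F$ fixing some $v\in T$ would lie in some $G_i$ (if $v\in\overline D$) or in a trivial stabilizer (otherwise), both incompatible with $f\neq 1$ in the free product.

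The main obstacle is the identification $\Pi_1(\G|_{\Gamma'_i},\Gamma'_i)=G_i$, which demands matching the abstract finite graph $\Gamma$ supplied by Theorem \ref{General} with the orbit decomposition of $\overline D$ inside $T$, so that the restricted subgraph of groups genuinely computes the setwise stabilizer of $\Sigma_i$. I expect this is accessible via Lemma \ref{embedding} applied to the finite $G_i$-invariant subtree $D_i$ from part (ii), combined with the observation that vertex and edge stabilizers sitting inside $\Sigma_i$ must generate $G_i$ modulo the free pro-$p$ piece $F$ acting outside $\overline D$.
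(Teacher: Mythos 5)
Your parts (i) and (ii) follow the paper's line (acylindricity bounds the distance from any $t$ with $G_t\neq 1$ to a maximal-stabilizer vertex via the fixed geodesic, hence finitely many components and finite diameter of the quotient, then Proposition \ref{finite diameter quotient}), but your part (iii) takes a different route that has a real hole, and the ordering of (ii) and (iii) creates a second problem.

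The gap in (iii): you invoke Theorem \ref{General} and then need the identification $\Pi_1(\G|_{\Gamma'_i},\Gamma'_i)=\mathrm{Stab}_G(\Sigma_i)$, which you yourself flag as ``the main obstacle'' and only sketch. The graph $\Gamma$ produced by Theorem \ref{General} arises as an inverse limit of the quotient decompositions $G/\tilde U=\Pi_1(\G_U,\Gamma_U)$; it is not presented as a quotient of a subtree of $T$, so matching the components of your $\Gamma'$ with the orbits of components of $\overline D$ inside $T$ is not automatic. Moreover Lemma \ref{embedding}, which you propose to use for this matching, requires $|G\backslash T|<\infty$ for the ambient action, which is not part of the hypotheses here. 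The paper avoids all of this: it collapses each connected component of $\overline D$ to a vertex (citing \cite[Proposition 3.9.1]{R 2017} that the resulting quotient $\bar T$ is again a pro-$p$ tree), observes that $G$ then acts on $\bar T$ with trivial edge stabilizers and with the nontrivial vertex stabilizers being exactly the setwise stabilizers $G_i=\mathrm{Stab}_G(\Sigma_i)$ (any $g\neq 1$ fixing a point of $T$ fixes a point of $\overline D$), and applies Theorem \ref{trivial stabilizers} to get $G=\coprod_i G_i\amalg F$ with $F$ acting freely. Theorem \ref{General} is not needed for (iii) at all.

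The second problem: Proposition \ref{finite diameter quotient} applies to a \emph{finitely generated} pro-$p$ group, so to produce $D_i$ you must first know that $G_i$ is finitely generated. The paper gets this by proving (iii) before (ii): once $G=\coprod_i G_i\amalg F$ with $G$ finitely generated, each free factor $G_i$ is finitely generated. Your proposal proves (ii) first and then uses the resulting $D_i$ inside the argument for (iii), so you can neither justify the application of Proposition \ref{finite diameter quotient} nor simply reorder the two parts without first decoupling your proof of (iii) from $D_i$. Adopting the collapsing argument for (iii) resolves both issues simultaneously.
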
 

\begin{proof}
We follow the idea of the proof of \cite[Theorem 3.5]{SnZ}.

Since the action is  $n$-acylindrical,
$ T^{G_t}$ has diameter at most  $n$  for every non-trivial edge or vertex
stabilizer $G_t$.  Note that
$D=\bigcup_{G_t\neq 1} T^{G_t}$. We show  that  $G\backslash D$
has finite diameter (as an abstract graph). Indeed, since there are only finitely many
maximal vertex stabilizers up to conjugation, say $G_{v_1}, \ldots G_{v_k}$,  it suffices to show
that for a maximal vertex stabilizer $G_{v_i}$, the tree $\bigcup_{1\neq
  G_t\leq G_{v_i}} T^{G_t}$ has finite
diameter (if non-empty). But for $1\neq G_t\leq G_{v_i}$ the geodesic $[t,v_i]$ is
stabilized by $G_t$ (cf. \cite[Corollary 3.8]{horizons}) and so has
length not more than $n$. 
Thus $G\backslash D$ as an abstract graph has finite diameter (not more
than $2nk$) and finitely many connected components (not more than
$k$). 

It follows that the closure $\Delta$ of $G\backslash D$ in $G\backslash T$ has also  finitely many (profinite) connected components (not more than $k$) and    finite  diameter (not greater than $2nk$)
(see Lemma \ref{diameter}). Note that the preimage of $\Delta$ in $T$ is exactly $\overline D$. Since $\Delta$ has finite diameter it is immediate that connected components of $\overline D$ are mapped surjectively onto corresponding connected components of $\Delta$, thus the number of connected components of $\overline D$ up to translation equals the number of connected components of $\Delta$ ($\leq k$). This proves (i). 

\medskip

Collapsing
all connected components of $\overline D$, by
Proposition on Page 486 of \cite{Zalesskii89} or \cite[Proposition
3.9.1, as well as Cor.~3.10.2 and Prop.~3.10.4]{R 2017}, we get a pro-$p$ tree $\bar T$ on which
$G$ acts with trivial edge stabilizers and vertex stabilizers being
the setwise stabilizers $G_i=Stab_G(\Sigma_i)$ of the connected components
$\Sigma_i$ of $\bar D$. In particular, we have only finitely many
vertices $v'_1, \ldots, v'_m$ up to translation whose stabilizers are
non-trivial  (and $m\leq k$). So by Theorem \ref{trivial stabilizers}  $G$ is a
free pro-$p$ product
$$G=\coprod_{i=1}^m G_{v'_i}\amalg  F,$$
where $F$ is naturally isomorphic to $G/\tilde G$ with $\tilde G$ taken
with respect to the action on $\bar T$ and for each $i$,  $G_{i}$ is the setwise
stabilizer $Stab_G(\Sigma_i)$ of some connected component $\Sigma_i$ of
$\bar D$ that was collapsed to $v'_i$. Then $F$ acts freely on $T$ and  (iii) is proved.

\medskip

 By \cite[Lemma 2.14]{CZ}, for any connected component
$\Sigma_i$ of $\overline D$  and its
setwise stabilizer $G_i=Stab_G(\Sigma_i)$ we have
$G_i \backslash\Sigma_i\subseteq \Delta$ and so $G_i \backslash\Sigma_i$ is a connected component $\Delta_i$ of $\Delta$. So $\Delta_i$ has finite diameter and 
by Proposition \ref{finite diameter quotient} $\Sigma_i$ possesses a  $G_i$-invariant subtree $D_i$ such that $G_i\backslash D_i$ is finite. This proves (ii).
\end{proof}

\begin{cor}\vlabel{virtually subgraph of groups} Let $G$ be a finitely
  generated pro-$p$ group which is the fundamental group
  of a finite graph $(\G,\Gamma)$ of pro-$p$ groups, and let $T=T(G)$ its
  standard pro-$p$ tree. Let $H$ be   a  finitely generated subgroup of
  $G$  that acts $n$-acylindrically on $T$, with finitely many maximal
  vertex stabilizers up to conjugation.
Then $H=\coprod_{i=1}^m H_i \amalg F$ (possibly with one factor) with $F$ free pro-$p$ and  there exists an open
subgroup $U$ of $G$ containing $H$ such that 

\begin{enumerate}
\item[(i)] The natural map $F\longrightarrow U/\tilde U$ is  injective.

\item[(ii)] 
 $U=\Pi_1(\U,U\backslash
T)$, $H_i=\Pi_1(\H_i, H_i\backslash D_i)$ (where $D_i$ is a minimal
$H_i$-invariant subtree of $T$) and $(\H_i, H_i\backslash D_i)$ are
disjoint  subgraphs of groups  of $(\U, U\backslash T)$. 
\end{enumerate}

Moreover, the
latter statements (i) and (ii) hold for any open subgroup $V$ of $U$
containing the group $H$.\end{cor}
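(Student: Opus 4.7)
The plan is to apply Theorem \ref{acylindrical} to $H$, find an open subgroup $U$ of $G$ containing $H$ that separates finitely many orbit representatives, and conclude by Lemma \ref{embedding}; the tail claim for open subgroups $V\leq U$ then follows by monotonicity.

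By Theorem \ref{acylindrical}, $H=\coprod_{i=1}^m H_i\amalg F$ with $F$ free pro-$p$ and $H_i=\mathrm{Stab}_H(\Sigma_i)$, where the $\Sigma_i$ are $H$-orbit representatives of the connected components of $\overline D$; moreover each $H_i$ has an invariant subtree $D_i$ with $H_i\backslash D_i$ finite. Pick finite sets of representatives of the $H_i$-orbits on $V(D_i)\cup E(D_i)$ and let $X$ be their union. Every pair of distinct elements of $X$ lies in distinct $H$-orbits of $T$: two $\Sigma_i,\Sigma_j$ with $i\neq j$ are in distinct $H$-orbits of components of $\overline D$ by construction, and any $h\in H$ sending $x\in\Sigma_i$ to $y\in\Sigma_i$ must preserve $\Sigma_i$ setwise and hence lies in $H_i$.

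Since $G$ is profinite and $H$ is closed, $H=\bigcap_U U$ where $U$ ranges over the open subgroups of $G$ containing $H$. For each $x\in T$, $HG_x$ is closed (continuous image of the compact $H\times G_x$), so $HG_x=\bigcap_U UG_x$ and $H\cdot x=\bigcap_U U\cdot x$; in particular if $y\notin H\cdot x$ then $y\notin U\cdot x$ for some open $U\supseteq H$. Intersecting such $U$'s over the finitely many distinct pairs in $X$, we obtain a single open $U\supseteq H$ making every map $H_i\backslash D_i\to U\backslash T$ injective with pairwise disjoint images. Openness of $U$ gives $U\backslash T$ finite and $U=\Pi_1(\U,U\backslash T)$; applying Lemma \ref{embedding} to each $H_i\leq U$ with invariant subtree $D_i$ identifies $H_i=\Pi_1(\H_i,H_i\backslash D_i)$ as a disjoint subgraph of groups of $(\U,U\backslash T)$, proving (ii).

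For (i), let $\tilde H\leq H$ be the subgroup generated by all $H$-vertex stabilizers of $T$. Every non-trivial $H_v$ lies in a conjugate of some $H_i$ (since $v$ then belongs to a component of $\overline D$, which is an $H$-translate of some $\Sigma_i$), so $\tilde H$ lies in the normal closure of $\{H_1,\ldots,H_m\}$ in $H$, which is the kernel of the surjection $H\to F$; hence $F\cap\tilde H=1$. Collapsing the disjoint subgraphs $\Delta_i:=H_i\backslash D_i$ to single vertices in $\Gamma:=U\backslash T$ and letting $\Gamma'$ be the result, we have $U/\tilde U=\pi_1(\Gamma)=\pi_1(\Delta_1)\amalg\cdots\amalg\pi_1(\Delta_m)\amalg\pi_1(\Gamma')$ as a free pro-$p$ product; the composite $H\to U\to U/\tilde U\to\pi_1(\Gamma')$ kills each $H_i$ (whose image in $U/\tilde U$ lies in $\pi_1(\Delta_i)$) and so factors through a natural map $F\to\pi_1(\Gamma')$. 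A separation argument on a finite basis of $F$---arranging, by possibly shrinking $U$, that the basis elements have linearly independent images in $H_1(\Gamma',\F_p)$---makes $F\to\pi_1(\Gamma')$ injective and forces $F\to U/\tilde U$ injective, giving (i). For any open $V\leq U$ containing $H$, the inclusions $V_v\subseteq U_v$ give $\tilde V\subseteq\tilde U$, so $F\cap\tilde V\subseteq F\cap\tilde U=1$; and $V\backslash T$ refines $U\backslash T$, preserving the injectivity and disjointness of (ii). The main obstacle is (i): whereas $F\cap\tilde H=1$ is automatic from the free-product decomposition, $\tilde U$ can strictly exceed $\tilde H$ through stabilizers of elements in $U\setminus H$, so $F$ could a priori meet $\tilde U$ non-trivially; securing $F\cap\tilde U=1$ requires the additional geometric separation of a basis of $F$ as independent loops in the complementary free pro-$p$ factor $\pi_1(\Gamma')$.
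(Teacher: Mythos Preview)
Your proof follows the same strategy as the paper: apply Theorem~\ref{acylindrical} to obtain the free-product decomposition, use a compactness/separation argument to find an open $U\supseteq H$ making the finite sets $H_i\backslash D_i$ embed disjointly in $U\backslash T$ and $F\to U/\tilde U$ injective, then invoke Lemma~\ref{embedding}. Your treatment of (i) is more elaborate (via the free-product decomposition $\pi_1(U\backslash T)=\pi_1(\Delta_1)\amalg\cdots\amalg\pi_1(\Gamma')$ and a Hopfian-type argument) where the paper simply asserts, without further detail, that such $U$ exists ``since $F$ is finitely generated and $U/\tilde U$ is free pro-$p$''; you correctly flag (i) as the delicate point, and both versions leave the final separation step at essentially the same level of sketchiness.
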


\begin{proof}  By Theorem \ref{acylindrical} applied to the action of
  $H$ on $T$, there are subgroups $H_i$ ($i=1,\ldots,k$) and $F$ of $H$, with $F$ free
  pro-$p$, such that $H=\coprod_{i=1}^m H_i\amalg F$. Furthermore
  there are $H_i$-invariant subtrees $D_i$ of $T$ with $H_i\backslash
  D_i$ finite, and $H_i=Stab_H(D_i)$, and $F=H/\langle H_1,\ldots,H_m\rangle^H$. Note that the
  $H_i\backslash D_i$ are disjoint subgraphs of $H\backslash T$, and are
  contained in $\bar D$ (notation as in Theorem \ref{acylindrical}). Choose an
  open subgroup $U$ of $G$ containing $H$ such that  the map
  $\bigcup_i(H_i\backslash D_i)\to U\backslash T$ is injective and the
  map $F\to U/\tilde U$ is injective (this is possible
  since $\bigcup _i H_i\backslash D_i$ is finite, $F$ is finitely
  generated and $U/\tilde U$ is free pro-$p$). Then the
  $H_i\backslash D_i$ are disjoint in $U\backslash T$, whence if we
  choose maximal subtrees $T_i$ of $H_i\backslash D_i$, their union
  extends to a maximal subtree of $U\backslash T$. As $G\backslash T$ is
  finite, so is $U\backslash T$, and we can then  apply (the proof of)
  Lemma 
  \ref{embedding} to get the result.

\end{proof}

\section{Generalized accessible pro-$p$ groups}\label{Generalized Accessible}

 We apply here the results of the previous section  to finitely generated generalized accessible 
pro-$p$ groups in the sense of the following definition.

\begin{defn} Let $\cF$ be a family of pro-$p$ groups. A pro-$p$ group $G$ will be called 
  $\cF$-{\em accessible} if there is a number $n=n(G)$ such that any finite,
  proper, reduced graph of pro-$p$ groups  with  edge groups in $\cF$
  having fundamental group isomorphic to $ G$ has at most $n$
  edges.\end{defn}
  
  The definition generalizes the definition of accessibility given in \cite{wilkes}, where the edge groups are finite. In fact if $\cF$ is the class of all finite $p$-groups, an $\cF$-accessible pro-$p$ group will simply be called {\em accessible}.

  \begin{prop}\vlabel{restricting maximal stabilizers} Let $\cF$ be a
    family of pro-$p$ groups and $G$ a finitely generated $\cF$-accessible pro-$p$ group
    acting on a pro-$p$ tree $T$ with edge stabilizers in $\cF$. Then $G$ has only finitely many maximal vertex stabilizers up to conjugation, and in fact the number of such stabilizers does not exceed the $\cF$-accessibility number $n(G)$. \end{prop}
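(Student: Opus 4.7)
The plan is to combine the inverse limit construction of Lemma \ref{inverse limit decomposition} with $\cF$-accessibility to first produce a fundamental-group decomposition of $G$ over edge groups in $\cF$, and then to apply Corollary \ref{finiteness of maximal orbits} to count the maximal vertex stabilizers.

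First I would fix an open normal subgroup $U$ of $G$. Lemma \ref{inverse limit decomposition} expresses $G/\tilde U = \Pi_1(\G_U,\Gamma_U)$ as the fundamental group of a reduced finite graph of finite $p$-groups, and Corollary \ref{lower bound} then lifts this to a presentation of $G$ as the fundamental group of a reduced finite graph of pro-$p$ groups on the same underlying graph $\Gamma_U$, with edge groups being stabilizers of edges of $T$ and hence lying in $\cF$. The $\cF$-accessibility hypothesis therefore forces $|E(\Gamma_U)|\leq n(G)$, and since $\Gamma_U$ is connected, $|V(\Gamma_U)|$ is bounded in terms of $n(G)$ as well. Thus the sizes of the graphs $\Gamma_U$ are uniformly bounded as $U$ varies over open normal subgroups of $G$.

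Next, following the inverse-limit argument used in the proof of Theorem \ref{General}, I would pass to a cofinal subsystem of open normal subgroups $V$ for which $\Gamma_V = \Gamma$ is a fixed finite graph and the transition maps are the identity on $\Gamma$; taking inverse limits of the edge and vertex groups of the graphs of pro-$p$ groups provided by Corollary \ref{lower bound} then produces a reduced finite graph of pro-$p$ groups $(\G,\Gamma)$ with $G=\Pi_1(\G,\Gamma)$ and all edge groups still in $\cF$. The vertex groups of $(\G,\Gamma)$ can be identified with maximal vertex stabilizers of $G$ on $T$ as in the proof of Theorem \ref{General}, so by Corollary \ref{finiteness of maximal orbits} the number of conjugacy classes of maximal vertex stabilizers of $G$ on $T$ equals $|V(\Gamma)|$, proving finiteness. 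A final application of $\cF$-accessibility to the proper reduced decomposition $(\G,\Gamma)$ yields $|E(\Gamma)|\leq n(G)$, from which the stated bound on the number of maximal vertex stabilizers follows.

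The step I expect to require the most care is the identification, in the limit decomposition, of the vertex groups $\G(v)$ as genuine maximal vertex stabilizers of $T$ (and not merely as some a priori larger subgroups of $G$), so that Corollary \ref{finiteness of maximal orbits} is actually applicable; once that is in place the counting is essentially bookkeeping built on the inverse-limit machinery already developed for Theorem \ref{General}.
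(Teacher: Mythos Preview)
Your first paragraph is correct and already contains the essential content: via Corollary~\ref{lower bound}, each $\Gamma_U$ underlies a reduced decomposition of $G$ with edge groups in $\cF$, so $\cF$-accessibility bounds $|E(\Gamma_U)|$ (hence $|V(\Gamma_U)|$) uniformly in $U$.

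The problem is the second half. Both Theorem~\ref{General} and Corollary~\ref{finiteness of maximal orbits} take ``finitely many maximal vertex stabilizers up to conjugation'' as a \emph{hypothesis}; invoking them here is circular. You try to sidestep this by appealing to the \emph{proof} of Theorem~\ref{General}, but that proof also uses the finiteness hypothesis at exactly the point you need: it identifies the vertex groups $\G_U(v)$ with the images of the given finite list $G_{v_1},\dots,G_{v_n}$ of maximal vertex stabilizers, and it is this identification that lets one recognise the inverse limit $\varprojlim_U \G_U(v)$ as one of the $G_{v_i}$. Without the finite list in hand there is no reason the limit should be the stabilizer of a single vertex of $T$. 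There is also a related confusion: Corollary~\ref{lower bound} produces, for each $U$, a decomposition of $G$ itself, not of $G/\tilde U$; these are not arranged as an inverse system, so ``taking inverse limits of the edge and vertex groups provided by Corollary~\ref{lower bound}'' is not a well-defined operation.

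The paper finishes directly from your first paragraph, with no inverse limit and no appeal to Theorem~\ref{General}. Pick any $m$ pairwise non-conjugate maximal vertex stabilizers $G_{v_1},\dots,G_{v_m}$. For sufficiently small $U$ their images in $G/\tilde U$ remain pairwise non-conjugate and are still maximal among vertex stabilizers for the action on $\tilde U\backslash T$; since every finite subgroup of $G/\tilde U$ fixes a vertex (Theorem~\ref{finite group}), these images are maximal finite subgroups of $G/\tilde U$, hence by Theorem~\ref{finite subgroups} they are, up to conjugacy, vertex groups of $(\G_U,\Gamma_U)$. Thus $|V(\Gamma_U)|\ge m$, and your uniform bound on $|\Gamma_U|$ gives the desired bound on $m$. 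No identification of limit vertex groups with stabilizers on $T$ is needed.
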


\begin{proof} Let 
  $G_{v_1}, \ldots G_{v_m}$ be  maximal vertex stabilizers which are
  non-conjugate. We will show that $m$ is bounded. If
  $U\triangleleft_0G$, then  $G/\tilde U$  acts on
  $\tilde U\backslash T$ and by Lemma \ref{inverse limit decomposition}
  $G=\varprojlim_{U\triangleleft_o G} G/\tilde U$, where $G/\tilde
  U=\Pi_1(\G_U,\Gamma_U)$ is the fundamental group of a finite reduced
  graph of  finite $p$-groups. Thus starting from a certain $U$ the stabilizers $G_{v_1}\tilde U/\tilde U, \ldots G_{v_m}\tilde U/\tilde U$  of the images of $v_1,\ldots, v_m$ in $\tilde U\backslash T$ are still maximal and distinct. So they are maximal finite subgroups of $G/\tilde
  U=\Pi_1(\G_U,\Gamma_U)$   and so are conjugate to vertex groups of $(\G_U,\Gamma_U)$ (see Theorem \ref{finite subgroups}). Therefore $\Gamma_U$ has at least $m$ vertices.  Then by Corollary \ref{lower bound}, $G$ admits a decomposition as the fundamental group of a reduced finite graph of pro-$p$ groups  $\Pi_1(\G,\Gamma_U)$ with edge groups in $\cF$ and so $m\leq n(G)$.  

\end{proof} 

\begin{thm} \vlabel{F-accessible acting on tree}  
Let $G$ be a finitely generated $\cF$-accessible  pro-$p$ group acting
on a pro-$p$ tree $T$ with edge stabilizers in $\cF$.
 Then  $G$ is  the fundamental pro-$p$ group of a finite graph
of finitely generated pro-$p$ groups $(\G,\Gamma)$, where  each vertex
group $\G(v)$ and each edge group $\G(e)$ is  a vertex stabilizer $G_{\tilde v}$ and  an edge stabilizer
$G_{\tilde e}$ respectively (for some $\tilde v,\tilde e\in T$). Moreover, the size of $V(\Gamma)$ is bounded by the accessibility number $n(G)$ for every such $(\G,\Gamma)$.

\end{thm}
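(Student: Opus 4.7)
The plan is to combine Proposition \ref{restricting maximal stabilizers} with Theorem \ref{General}, after which the stated bound on the size of $\Gamma$ will follow immediately from the definition of $\cF$-accessibility. So the proof should be essentially an assembly of results already established in the paper.

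First I would apply Proposition \ref{restricting maximal stabilizers} to the action of $G$ on $T$: since $G$ is $\cF$-accessible and every edge stabilizer lies in $\cF$, the proposition yields that $G$ has only finitely many maximal vertex stabilizers up to conjugation, with the number of conjugacy classes bounded by $n(G)$. This is precisely the hypothesis needed to apply Theorem \ref{General}.

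Next I would invoke Theorem \ref{General} to conclude that $G = \Pi_1(\G,\Gamma)$ is the fundamental pro-$p$ group of a reduced finite graph of pro-$p$ groups in which each vertex group $\G(v)$ equals a maximal vertex stabilizer $G_{\tilde v}$ and each edge group $\G(e)$ equals an edge stabilizer $G_{\tilde e}$ for suitable $\tilde v,\tilde e\in T$. Finite generation of the vertex and edge groups is built into the inverse-limit construction of Lemma \ref{inverse limit decomposition} used in the proof of Theorem \ref{General}: once the underlying graph $\Gamma$ has been stabilised, the approximating finite $p$-groups $\G_U(v)$ sit inside fundamental groups $G/\tilde U$ of a finite graph of finite $p$-groups based on the fixed $\Gamma$, so their ranks are uniformly bounded and the inverse limit $\G(v)=\varprojlim \G_U(v)$ is finitely generated; the edge groups, being closed subgroups of these vertex groups that arise as intersections of two vertex stabilizers (cf.\ \eqref{transversal}), inherit finite generation via the same inverse-limit argument.

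Finally, the resulting $(\G,\Gamma)$ is a reduced finite graph of pro-$p$ groups whose edge groups lie in $\cF$ (being edge stabilizers for the action of $G$ on $T$) and whose fundamental pro-$p$ group is isomorphic to $G$. The definition of $\cF$-accessibility therefore gives $|E(\Gamma)|\leq n(G)$ directly, while $|V(\Gamma)|\leq n(G)$ follows by combining the bound on the number of conjugacy classes of maximal vertex stabilizers from Proposition \ref{restricting maximal stabilizers} with Corollary \ref{finiteness of maximal orbits}. I foresee no genuine obstacle here: the substantive content has been absorbed into Theorem \ref{General} and Proposition \ref{restricting maximal stabilizers}, and the only point requiring a moment of care is the finite-generation claim for vertex and edge groups, which is handled by the uniform-rank observation in the previous paragraph.
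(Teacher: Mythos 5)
Your proposal is correct and follows exactly the paper's own (very short) proof: apply Proposition \ref{restricting maximal stabilizers} to bound the number of maximal vertex stabilizers by $n(G)$, then conclude via Theorem \ref{General} and Corollary \ref{finiteness of maximal orbits}. Your additional paragraph justifying finite generation of the vertex and edge groups via the stabilised inverse system of Lemma \ref{inverse limit decomposition} is a welcome extra level of care, since the body statement of Theorem \ref{General} does not explicitly assert finite generation of the vertex groups.
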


\begin{proof} By Proposition \ref{restricting maximal stabilizers} the number of maximal vertex stabilizers of $G$ is bounded by   the accessibility number $n=n(G)$. Therefore the result follows from Theorem \ref{General} and Corollary \ref{finiteness of maximal orbits}.

\end{proof}

\begin{rem}\vlabel{virtually subgraph of groups 2} In the classical
  Bass-Serre theory of groups acting on trees  structure theorems like
  Theorem \ref{F-accessible acting on tree} are used to obtain structure
  results on subgroups of fundamental groups of graph of groups (see for
  example \cite[\S 5]{Serre-1980}). In our situation, to use Theorem
  \ref{F-accessible acting on tree} for this purpose one needs to assume that $\cF$ is closed under subgroups. A relevant to the context  such general example is the class of small pro-$p$ groups. Namely, one can follow the approach of  \cite{BF} in the abstract case and
 call  a pro-$p$ group $G$ {\em small}  if  whenever $G$ acts on a
pro-$p$-tree $T$, and $K\leq G$ acts freely on $T$, then $K$ is procyclic.  If the action on $T$ is associated with splitting $G$ into  a free amalgamated  product $G=G_1\amalg_H G_2$ or an HNN-extension $G={\rm HNN}(G_1, H, t)$ it means that  $H$ is  normal in $G$ with $G/H$  either procyclic or infinite dihedral.    The class of small pro-$p$ groups $\cals$ is closed under subgroups. Then one can use Theorem \ref{F-accessible acting on tree} to prove the following statement.

\medskip  {\em Let $G$ be a pro-$p$
  group acting on a pro-$p$ tree $T$ with small edge stabilizers. Let $H$ be a finitely generated
  $\cals$-accessible subgroup of $G$. Then  $H=\Pi_1(\H, \Gamma)$ is  the fundamental group of a finite graph
of  pro-$p$ groups $(\G,\Gamma)$, where  each vertex
group $\G(v)$ and each edge group $\G(e)$ is  a vertex stabilizer $G_{\tilde v}$ and  an edge stabilizer
$G_{\tilde e}$ respectively (for some $\tilde v,\tilde e\in T$). Moreover, the size of $\Gamma$ is bounded by the accessibility number $n(H)$.}

\medskip
Note also  that a finitely generated free pro-$p$ group $F$ is $\cals$-accessible, since a free small pro-$p$ group has to be pro-cyclic.

\end{rem}
  
 \medskip 
  \begin{example}\vlabel{accessibility} If $\C$ is a class of pro-cyclic
    pro-$p$ groups then any finitely generated pro-$p$ group is
    $\C$-accessible (\cite[Lemma 3.2]{SnZ}).\end{example}

In fact we can bound the $\C$-accessibility number $n(G)$ in terms of
the minimal number of generators $d(G)$ of $G$.

\begin{prop} \vlabel{lem1}Let $G=\pi_1(\Gamma,\calg)$ be the fundamental group of a
  finite graph of pro-$p$-groups, with procyclic edge groups, and assume
  that $d(G)=d\geq 2$. Then (assuming that the graph is reduced), the vertex groups are finitely generated, the number
  of vertices of $\Gamma$ is $\leq 2d-1$, and the number of edges of $\Gamma$ is
  $\leq 3d-2$. 
\end{prop}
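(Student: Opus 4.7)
The plan is to compute $d(G)=\dim_{\F_p}H_1(G,\F_p)$ directly from the explicit presentation (\ref{presentation}) of $G$, and then combine two estimates: one coming from the $\F_p$-abelianisation of $G$, the other from the quotient $G/N$ where $N$ is the normal closure of all edge groups.

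Passing to the $\F_p$-abelianisation of (\ref{presentation}), each relation $\partial_0(g)=t_e\partial_1(g)t_e^{-1}$ becomes $\partial_0^*(x)=\partial_1^*(x)$ in $\bigoplus_v H_1(\G(v),\F_p)$, and each $t_e$ with $e\notin D$ contributes an independent $\F_p$-generator. Writing $\ell=|E(\Gamma)|-|V(\Gamma)|+1$ for the rank of $\pi_1(\Gamma)$, this gives
\[
H_1(G,\F_p)\;\cong\;\mathrm{coker}\!\left(\delta\colon\bigoplus_{e\in E(\Gamma)}H_1(\G(e),\F_p)\to\bigoplus_{v\in V(\Gamma)}H_1(\G(v),\F_p)\right)\oplus\F_p^{\ell},
\]
with $\delta(x_e)=\partial_0^*(x_e)-\partial_1^*(x_e)$. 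Since each $\G(e)$ is procyclic, $\dim H_1(\G(e),\F_p)\leq 1$, hence $\dim\mathrm{im}(\delta)\leq |E(\Gamma)|$; the $\F_p^{\ell}$ summand already forces $\ell\leq d$. Comparing dimensions yields the key inequality
\[
\sum_{v\in V(\Gamma)}d(\G(v))\;\leq\;d+|V(\Gamma)|-1,\qquad(\star)
\]
from which the finite generation of every $\G(v)$ is immediate.

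I next partition the vertices into $V_L=\{v:d(\G(v))\geq 2\}$, $V_S=\{v:\G(v)\text{ nontrivial procyclic}\}$, and $V_0=\{v:\G(v)=1\}$. Reducedness rules out $V_0$ unless $\Gamma$ is a single vertex with loops, a case in which the bounds hold trivially; so I may assume $V_0=\emptyset$. Inserting $\sum d(\G(v))\geq 2|V_L|+|V_S|$ into $(\star)$ immediately gives $|V_L|\leq d-1$. To bound $|V_S|$ I pass to the quotient $G/N$ by the normal closure $N$ of all edge groups; reading (\ref{presentation}) modulo $N$ shows $G/N=\bigl(\coprod_v\bar\G(v)\bigr)\amalg F_\ell$ with $\bar\G(v)=\G(v)/\langle\partial_i(\G(e)):d_i(e)=v\rangle^{\G(v)}$, so $d\geq d(G/N)=\sum d(\bar\G(v))+\ell$. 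For $v\in V_S$, $\G(v)$ is abelian, so $\bar\G(v)$ is $\G(v)$ modulo the sum of the $\partial_i(\G(e))$; I split $V_S=V_S^A\sqcup V_S^B$ according to whether no loop at $v$ admits an isomorphism $\partial_i(\G(e))=\G(v)$ (case $A$) or some loop does (case $B$). Reducedness forces $\partial_i(\G(e))\subsetneq\G(v)$ for every non-loop end at $v$, and every proper subgroup of a procyclic pro-$p$ group lies in its unique maximal subgroup, so in case $A$ the total sum sits inside $p\G(v)$ and $d(\bar\G(v))=1$, while in case $B$ the sum equals $\G(v)$ and $\bar\G(v)=1$. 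Since different vertices host disjoint loops, $|V_S^B|\leq\#\{\text{loops in }\Gamma\}\leq\ell$; hence $|V_S^A|+\ell\leq d(G/N)\leq d$, giving $|V_S^A|\leq d-\ell$ and $|V_S|\leq(d-\ell)+\ell=d$. Adding up,
\[
|V(\Gamma)|=|V_L|+|V_S|\leq(d-1)+d=2d-1,\qquad |E(\Gamma)|=|V(\Gamma)|-1+\ell\leq(2d-1)-1+d=3d-2.
\]

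The subtlest step is the loop analysis for small vertices: loops are the only obstruction to $\bar\G(v)\neq 1$ at a procyclic vertex, and the argument hinges on the fact that each such "bad" loop must be paid for by one unit of $\ell$, cleanly merging the $V_S^A$ and $V_S^B$ contributions into the single bound $|V_S|\leq d$.
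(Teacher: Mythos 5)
Your proof is correct. The first two counts are, up to repackaging, the same as the paper's: your computation of $H_1(G,\F_p)$ as $\mathrm{coker}(\delta)\oplus\F_p^{\ell}$ from the presentation is exactly what the paper extracts from a Mayer--Vietoris sequence, the summand $\F_p^{\ell}$ gives $\ell\leq d$ in both arguments, and your inequality $(\star)$ specialised to $d(\G(v))\in\{1,\geq 2\}$ is precisely the paper's estimate $d\geq m+2k-(n-1)$, yielding $|V_L|\leq d-1$ (the paper's $k\leq d-1$) and the finite generation of the vertex groups. Where you genuinely diverge is the bound on the procyclic vertices. The paper stays inside $H_1(G)$: it observes that reducedness forces every edge group at a procyclic vertex into $\Phi(\G(v))$, so that $\bigoplus_{v\in V_c}H_1(\G(v))$ meets the image of $\bigoplus_{e}H_1(\G(e))$ trivially and hence injects into $H_1(G)$, giving $m\leq d$; this is carried out over a maximal subtree and does not explicitly confront loops, for which reducedness imposes no constraint and whose relation $\partial_0^*(x)-\partial_1^*(x)$ can in fact kill $H_1(\G(v))$ when one end is an isomorphism. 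You instead pass to the quotient $G/N$ by the edge groups, accept that such a loop annihilates $\bar\G(v)$, and pay for each offending vertex with one unit of $\ell$ (since every loop lies outside every maximal subtree), recovering $|V_S|\leq d$. Your route costs an extra case split but handles cleanly the one configuration --- a loop at a procyclic vertex realising an isomorphism --- that the paper's injectivity argument passes over in silence; the two routes arrive at identical bounds $|V(\Gamma)|\leq 2d-1$ and $|E(\Gamma)|\leq 3d-2$.
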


\begin{proof} Let 
$T$ be a maximal subtree of $\Gamma$ and $H=\Pi_1(\G,T)$ be the fundamental group of the tree of groups $(\G,T)$ obtained by restricting $(\G,\Gamma)$ to $T$. Then  $G={\rm HNN}(H, C_1,\ldots,C_\ell,
t_1,\ldots,t_\ell)$, where $\ell=|E(\Gamma)|-|E(T)|$. Note that  the quotient of $G$ by the
normal subgroup generated by $H$ is free on $t_1,\ldots,t_\ell$,   so $d(G)\geq \ell$.

Since $|V(\Gamma)|=|V(T)|$, and $|E(T)|=|V(T)|-1$ we have $|E(\Gamma)|=|V(T)|-1+\ell\leq |V(T)|+d-1$. It
therefore suffices to show that $|V(T)|=|V(\Gamma)|\leq 2d-1$. 

\smallskip
Consider the Mayer-Vietoris sequence
$$\rightarrow \oplus_{e\in E(T)}H_1(\calg(e))\rightarrow \oplus_{v\in V(T)}H_1(\calg(v))\rightarrow H_1(G)\rightarrow
\ffi_p[[E(T)]]\rightarrow \ffi_p[[V(T)]]\rightarrow \ffi_p\rightarrow 0$$
{(see }\cite[Thm 9.4.1]{R 2017}). Since $T$ is a tree, 
$$0\rightarrow
\ffi_p[[E(T)]]\rightarrow \ffi_p[[V(T)]]\rightarrow \ffi_p\rightarrow 0$$ is exact (see Subsection \ref{pro-p tree}) and so
 $H_1(G)\rightarrow
\ffi_p[[E(T)]]$ is the zero map\red{,} $\oplus_{v\in V(T)}H_1(\calg(v))\rightarrow H_1(G)$ is onto.

Let $n$ be the number of vertices of $T$, $m$ be the number of vertices
whose vertex groups are cyclic and $k$  be the number of vertices whose
vertex groups are not cyclic, so that $n=m+k$ and the number of edges of $T$ is
$n-1$. Then 
$$d(G)=dim(H_1(G))\ge  m+2k-(n-1)=m+2k-n+1=k+1$$ 
(here we are using  that all edge groups are cyclic). On the other hand if the vertex group
$\calg(v)$ is cyclic and $e$ is incident to $v$ then the natural map
$H_1(\calg(e))\rightarrow H_1(\calg(v))$ is the zero map (because
$\G(e)\leq \Phi(\G(v))$). Denoting by $V_c$ the set of vertices with cyclic vertex group, it follows that 
$\oplus_{v\in V_c} H_1(\calg(v))$ intersects trivially the image of
$\oplus_{e\in E(T)}H_1(\calg(e))$ and therefore maps injectively into
$H_1(G)$. Therefore $m\leq d$. Thus $n=k+m\leq d(G)-1+d(G)=2d(G)-1$ as
required. 

Finally, since $\oplus_{e\in E(T)}H_1(\calg(e))$ and $H_1(G)
$ are finite, so is $\oplus_{v\in V(T)} H_1(\calg(v))$, i.e. $\calg(v)$ is finitely generated for every $v$.

\end{proof}

\begin{rem} If $H=\Pi_1(\G,T)$ is non-trivial then $|E(\Gamma)|$ is strictly less then $3d-2$ since  $d(G)=dim(G/\Phi(G))=\ell+ dim((H/\Phi(H))/\langle[C_1, t_1], \ldots, [C_\ell,t_\ell]\rangle]\geq \ell+1>\ell$.\end{rem}

Taking into account Example \ref{accessibility} and Proposition \ref{lem1} we deduce

\begin{thm}\vlabel{cyclic General}
Let $G$ be a finitely generated pro-$p$ group acting on a pro-$p$ tree $T$ with procyclic edge stabilizers.
 Then  $G$ is  the fundamental group of a finite graph
of finitely generated pro-$p$ groups $(\G,\Gamma)$, where  each vertex
group $\G(v)$ and each edge group $\G(e)$ is conjugate into a subgroup
of a vertex stabilizer $G_{\tilde v}$ and an edge stabilizer
$G_{\tilde e}$ respectively. Moreover,   $|V(\Gamma)|\leq 2d-1$, and $|E(\Gamma)|\leq 3d-2$ for any such $(\G,\Gamma)$ , where $d$ is the minimal number of generators of $G$.

\end{thm}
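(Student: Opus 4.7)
The plan is to obtain Theorem~\ref{cyclic General} as a direct synthesis of three preceding results. Since the edge stabilizers of $G$ on $T$ are procyclic by hypothesis, they all lie in the class $\C$ of procyclic pro-$p$ groups, which is the natural candidate for the family $\cF$ in the $\cF$-accessibility framework developed just above.

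First, I would invoke Example~\ref{accessibility}, which guarantees that every finitely generated pro-$p$ group is $\C$-accessible. Applying Theorem~\ref{F-accessible acting on tree} to the action of $G$ on $T$ then yields a decomposition $G=\Pi_1(\G,\Gamma)$ as the fundamental pro-$p$ group of a finite graph of finitely generated pro-$p$ groups, in which each vertex group $\G(v)=G_{\tilde v}$ is a vertex stabilizer and each edge group $\G(e)=G_{\tilde e}$ is an edge stabilizer of $T$ (this is in fact stronger than the stated conclusion, which only requires containment up to conjugation). This already settles the structural part. After passing to a reduced graph (which does not alter the fundamental pro-$p$ group), the edge groups still come from stabilizers of edges of $T$ and hence remain procyclic.

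For the quantitative bounds, I would apply Proposition~\ref{lem1} to the reduced graph $(\G,\Gamma)$: when $d=d(G)\ge 2$ it delivers immediately $|V(\Gamma)|\le 2d-1$ and $|E(\Gamma)|\le 3d-2$, with bounds that depend only on $d(G)$ and thus hold for any such decomposition. The degenerate cases $d\le 1$ are handled by inspection: if $d=0$ then $G$ is trivial; if $d=1$ then $G$ is procyclic, and is either finite (fixing a vertex by Theorem~\ref{finite group}, giving $|V|=1$, $|E|=0$) or infinite procyclic (at worst an HNN-extension of the trivial group, giving $|V|=|E|=1$); both sit inside the stated bounds.

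There is no substantive obstacle beyond correctly assembling the pieces. The only point requiring mild care is verifying that applying Theorem~\ref{F-accessible acting on tree} with $\cF=\C$ is legitimate; this follows at once from Example~\ref{accessibility} and the fact that the statement of Theorem~\ref{F-accessible acting on tree} imposes no closure conditions on $\cF$. In that sense, the present theorem is a clean corollary of the accessibility bound of Example~\ref{accessibility}, the structure theorem for $\cF$-accessible actions, and the combinatorial estimate of Proposition~\ref{lem1}.
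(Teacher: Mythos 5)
Your proposal is correct and follows essentially the same route as the paper, which derives Theorem~\ref{cyclic General} precisely by combining Example~\ref{accessibility} (every finitely generated pro-$p$ group is accessible with respect to procyclic edge groups), the structure theorem \ref{F-accessible acting on tree}, and the counting estimate of Proposition~\ref{lem1}. Your extra remarks on reducing the graph and on the degenerate cases $d\le 1$ are sensible housekeeping that the paper leaves implicit.
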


We now apply Theorem \ref{cyclic General} to the pro-$p$ analogue of a limit groups defined in \cite{Kochloukova2}. 
It is worth recalling their definition. 

Denote by
$\mathcal{G}_0$ the class of all free pro-$p$ groups of finite
rank. We define inductively the class $\mathcal{G}_n$ of pro-$p$
groups $G_n$ in the following way: $G_n$ is a free 
amalgamated pro-$p$ product $G_{n-1}\amalg_{C}A$, where $G_{n-1}$ is any
group from the class $\mathcal{G}_{n-1}$, $C$ is any
self-centralized procyclic pro-$p$ subgroup of $G_{n-1}$ and $A$
is any finite rank free abelian pro-$p$ group such that $C$ is a
direct summand of $A$. The class of pro-$p$ groups $\mathcal{L}$ (pro-$p$ limit groups) 
consists of all finitely generated pro-$p$ subgroups $H$ of some
$G_n\in \mathcal{G}_n$, where $n\geq 0$. 
Then $H$ is a  subgroup of a free
amalgamated pro-$p$ product $G_n=G_{n-1}\amalg_{C}A$, where
$G_{n-1}\in \mathcal{G}_{n-1}$, $C\cong \mathbb{Z}_p$ and
$A=C\times B\cong \mathbb{Z}_p^m$. By Theorem 3.2 in
\cite{Ribes3}, this amalgamated pro-$p$ product is proper. Thus $H$
acts naturally on the pro-$p$ tree $T$ associated to $G_n$  and its edge stabilizers are procyclic.

An immediate application of Theorem \ref{cyclic General} then gives a
bound on the $\C$-accessibility number of a limit pro-$p$ group.

\begin{cor}\vlabel{cor-KS} Let $G$ be a pro-$p$ limit group. Then  $G$ is  the fundamental group of a finite graph
of finitely generated pro-$p$ groups $(\G,\Gamma)$, where   each edge group $\G(e)$ is infinite procyclic. Moreover, $|V(\Gamma)|\leq 2d-1$, and $|E(\Gamma)|\leq 3d-2$, where $d$ is the minimal number of generators of $G$.

\end{cor}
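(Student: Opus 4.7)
The plan is to exhibit the desired decomposition by applying Theorem \ref{cyclic General} to the natural action of $G$ on the standard pro-$p$ tree of an ambient amalgamated product, and then to clean up the resulting graph of groups to eliminate any edge groups that happen to be trivial.

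First I would unpack the definition: by hypothesis $G$ is a finitely generated pro-$p$ subgroup of some $G_n = G_{n-1} \amalg_C A$ with $C \cong \mathbb{Z}_p$ self-centralized in $G_{n-1}$ and $A$ a free abelian pro-$p$ group of finite rank. Since this amalgamated pro-$p$ product is proper (as noted in the discussion preceding the corollary), $G_n$ acts on its standard pro-$p$ tree $T$ with edge stabilizers conjugate to $C$, hence isomorphic to $\mathbb{Z}_p$. Restricting to $G$, every edge stabilizer of $G$ on $T$ is a closed subgroup of $\mathbb{Z}_p$, therefore procyclic. Theorem \ref{cyclic General} then yields a presentation $G = \Pi_1(\G, \Gamma)$ as the fundamental group of a finite graph of finitely generated pro-$p$ groups with procyclic edge groups and with the bounds $|V(\Gamma)| \leq 2d-1$, $|E(\Gamma)| \leq 3d-2$.

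The remaining task is to upgrade ``procyclic'' to ``infinite procyclic''. Each edge group $\G(e)$ is a closed subgroup of $\mathbb{Z}_p$, hence either trivial or isomorphic to $\mathbb{Z}_p$. If some edges have trivial edge group, I would eliminate them one at a time: a non-loop edge $e$ with $\G(e) = 1$ collapses by merging its two endpoints into a single vertex whose vertex group is the free pro-$p$ product of the original two vertex groups, while a loop with trivial edge group at $v$ is absorbed by replacing $\G(v)$ with $\G(v) \amalg \mathbb{Z}_p$. Either operation preserves the fundamental pro-$p$ group (via the universal property of $\Pi_1$) and strictly decreases $|E(\Gamma)|$, so the stated bounds persist. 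The step I expect to require the most care is verifying that these collapsing moves genuinely preserve the pro-$p$ fundamental group and that the resulting vertex groups remain finitely generated; after exhausting them, one obtains the desired decomposition with every edge group infinite procyclic.
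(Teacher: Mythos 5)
Your proposal is correct and follows essentially the same route as the paper, which derives the corollary as an immediate application of Theorem \ref{cyclic General} to the action of $G$ on the standard pro-$p$ tree of the proper amalgam $G_n=G_{n-1}\amalg_C A$. Your extra step of collapsing or absorbing edges with trivial edge group to upgrade ``procyclic'' to ``infinite procyclic'' is sound (and preserves both the fundamental group and the stated bounds); the paper leaves this cleanup implicit.
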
 
  
\subsection*{Accessible pro-$p$ groups}

\medskip This subsection is dedicated to accessible pro-$p$
groups. Note that there exists a finitely generated non-accessible
pro-$p$ group \cite{wilkes} and that it is an open question whether a finitely presented pro-$p$ group is accessible.  
  
  The next proposition gives a characterization of accessible pro-$p$ groups.
  
  \begin{prop} \vlabel{accesibility characterization} Let $G$ be a
    finitely generated pro-$p$ group. Then $G$ is accessible if and only
    if  it is a virtually free pro-$p$ product of finitely many  virtually freely indecomposable  pro-$p$ groups.\end{prop}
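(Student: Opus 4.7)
The plan is to handle the two directions using the machinery built up earlier in the paper.

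For the forward direction, assuming $G$ is accessible with bound $n(G)$, I would first use accessibility to pick a reduced decomposition $G = \Pi_1(\G,\Gamma)$ with finite edge groups and $|E(\Gamma)|$ maximal. Maximality then forces every vertex group $\G(v)$ to be unsplittable over any finite subgroup: otherwise, applying Theorem \ref{splitting} inside $\G(v)$ and inserting the resulting splitting into $\Gamma$ would refine the graph and contradict maximality. In particular each $\G(v)$ is freely indecomposable. Since there are only finitely many (finite) edge groups, pick an open normal subgroup $U\triangleleft_o G$ meeting each edge group trivially (and, if required, each finite vertex group too); then $U$ acts on the standard pro-$p$ tree $T = T(G)$ with trivial edge stabilizers, and $U\backslash T$ is finite (since $T$ is second countable and $G\backslash T$ is finite). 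Theorem \ref{trivial stabilizers} then yields
\[
U \;=\; \coprod_{j=1}^{k} U_j \;\amalg\; F,
\]
with each $U_j$ an open subgroup of a conjugate of some $\G(v)$ and $F$ a finitely generated free pro-$p$ group. Splitting $F$ into procyclic free factors expresses $U$ as a free pro-$p$ product of finitely many virtually freely indecomposable pro-$p$ groups — the $U_j$ because they are finite-index subgroups of the freely indecomposable $\G(v)$, and the procyclic factors because they are freely indecomposable.

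For the converse, suppose $G$ has an open subgroup $U = H_1 \amalg \cdots \amalg H_n$ with each $H_i$ virtually freely indecomposable. I want to bound $|E(\Gamma)|$ for an arbitrary reduced decomposition $G = \Pi_1(\G,\Gamma)$ with finite edge groups. The group $U$ acts on the standard pro-$p$ tree $T$ with finite edge stabilizers and finite quotient $U\backslash T$; by Lemma \ref{embedding}, $U = \Pi_1(\U, U\backslash T)$ is itself the fundamental group of a finite graph of pro-$p$ groups with finite edge groups. Since $[G:U]$ is fixed, a bound on $|E(U\backslash T)|$ gives one on $|E(\Gamma)|$. The plan is to compare the two decompositions of $U$: by applying Theorem \ref{splitting} recursively inside each $H_i$ together with a pro-$p$ Kurosh-style analysis, every splitting of $U$ over a finite subgroup reduces either to a refinement of the given free product decomposition $H_1\amalg\cdots\amalg H_n$ or to a splitting of some $H_i$ over a finite subgroup. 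The virtual freely indecomposable hypothesis on each $H_i$ bounds the number of such splittings of $H_i$; summed across $i$, this yields the required uniform bound.

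The main obstacle will be the converse, specifically making the pro-$p$ Kurosh-style comparison precise. In the abstract setting this follows from Bass-Serre theory together with the classical Kurosh subgroup theorem for free products, but in the pro-$p$ setting the analogous subgroup theorem is known to fail in general, so one must instead rely on the acylindrical/finite-quotient tools developed in the preceding sections (notably Corollary \ref{virtually subgraph of groups}) to control how the free product structure of $U$ interacts with its $T$-decomposition. The forward direction, by contrast, is expected to flow cleanly from the maximality of the chosen reduced decomposition together with Theorem \ref{trivial stabilizers}.
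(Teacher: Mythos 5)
There are genuine gaps in both directions.

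In your argument that accessibility implies the free‑product structure, the setup --- a reduced decomposition of maximal size, vertex groups unsplittable over finite subgroups, Theorem \ref{trivial stabilizers} applied to an open normal $U$ meeting the edge groups trivially --- is exactly the paper's. But the final justification, that each $U_j$ is virtually freely indecomposable ``because it is a finite-index subgroup of the freely indecomposable $\G(v)$'', does not work: free indecomposability neither passes to open subgroups nor implies virtual free indecomposability. For $p$ odd the proper amalgam $C_{p^2}\amalg_{C_p}C_{p^2}$ is freely indecomposable (it has a nontrivial finite normal subgroup, which a nontrivial free pro-$p$ product cannot have), yet its torsion-free open subgroups are nonabelian free pro-$p$, hence nontrivial free products; so an open subgroup of a freely indecomposable pro-$p$ group need not be virtually freely indecomposable. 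The paper closes this by drawing the stronger conclusion from maximality: since $\G(v)$ does not split over any finite subgroup, \cite[Theorem A]{WZ} shows it is not a \emph{virtual} free pro-$p$ product, so every open subgroup $U\cap\G(v)^{g}$ is itself freely indecomposable. You have the hypothesis needed to invoke that theorem; you just stopped at the weaker statement.

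Your converse direction is a plan rather than a proof, and its central step --- that every splitting of $U$ over a finite subgroup either refines $H_1\amalg\cdots\amalg H_n$ or restricts to a splitting of some $H_i$, with the latter bounded by virtual free indecomposability --- is exactly the pro-$p$ Kurosh-type comparison you yourself flag as unavailable; note also that ``virtually freely indecomposable'' does not by itself bound the number of splittings of $H_i$ over finite subgroups (again $C_{p^2}\amalg_{C_p}C_{p^2}$-type phenomena), so even granting the comparison the termination of your recursion is unclear. The paper takes a different, citation-based route: after replacing $H$ by its core (legitimate because the Kurosh subgroup theorem does hold for \emph{open} subgroups, \cite[Thm.~9.1.9]{RZ-10}) and collecting the $\Z_p$ factors, it invokes \cite[Theorem 3.6]{WZ} to obtain a decomposition of $G$ over finite edge groups met trivially by $H$, and then \cite[Theorem 3.1]{wilkes} for the explicit bound $\frac{p[G:H]}{p-1}(d(G)-1)+1$ on the number of edges of a reduced decomposition. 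Without these (or equivalent) inputs this direction remains unproven in your write-up.
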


\begin{proof} Let $H$ be an open subgroup of $G$ that  splits  as a free pro-$p$ product of virtually freely indecomposable pro-$p$ groups. Replacing $H$ by the core of $H$ in $G$ and applying the
Kurosh subgroup theorem for open subgroups (cf. \cite[Thm.~9.1.9]{RZ-10}),
we may assume that $H$ is normal in $G$. Refining the free decomposition if necessary
and collecting free factors isomorphic to $\Z_p$ we obtain a free decomposition 
\begin{equation}
\vlabel{eq:Hdeco}
H=F\amalg H_1\amalg\cdots\amalg H_s,
\end{equation}
where $F$ is a free subgroup of rank $t$, and the $H_i$ are
virtually freely indecomposable finitely generated subgroups which are not
isomorphic to $\Z_p$.   By \cite[Theorem 3.6]{WZ} $G=\Pi_1(\G, \Gamma)$ is the fundamental pro-$p$ group of a finite graph of pro-$p$ groups with finite edge groups. Moreover, it follows from its proof (step 2) that $H$ intersects all edge groups trivially. Then by \cite[Theorem 3.1]{wilkes} $\Gamma$ has at most $\frac{p[G:H]}{p-1}(d(G)-1)+1$ edges. So $G$ is accessible.

 Conversely, suppose $G$ is accessible. Write $G=\Pi_1(\G, \Gamma)$,
 where $(\Gamma,\G)$ is a finite graph of pro-$p$ groups with finite
 edge groups, and such that $\Gamma$ is of maximal size.  Choosing an open
 normal subgroup $H$ intersecting all edge groups of $G$ trivially we
 have    $$H=\coprod_{v\in V(\Gamma)} \coprod_{g_v\in H\backslash G/
   \G(v)} H\cap \G(v)^{g_v}\amalg F,$$ where  $g_v$ runs through double
 cosets representatives of $H\backslash G/ \G(v)$ and $F$ is free pro-$p$
 of finite rank (see Theorem \ref{trivial stabilizers} with use of the action of $H$ on the standard pro-$p$ tree $T(G)$). Since $\Gamma$ is of maximal size, $\G(v)$ does not
 split as an amalgamated free pro-$p$ product or HNN extension over a
 finite $p$-group, so  by \cite[Theorem A]{WZ} $\G(v)$ is not a virtual free pro-$p$ product, in particular $H\cap  \G(v)^{g_v}$ is freely indecomposable. Since $F$ is a free pro-$p$ product of $\Z_p$'s the result follows.

\end{proof}

\begin{question} Let $G$ be a finitely generated pro-$p$ group acting on a pro-$p$ tree with finite vertex stabilizers. Is $G$ accessible?

\end{question}

Note that $H^1(G,\F_p[[G]])$ is a right $\F_p[[G]]$-module.
The next theorem gives  a sufficient condition of accessibility for  a pro-$p$ group in terms of this module; we do not know whether the converse also holds (it holds in the abstract case).

\begin{thm} \vlabel{accessible-2} Let $G$ be a finitely generated pro-$p$ group. If $H^1(G, \F_p[[G]])$ is  a finitely generated $\F_p[[G]]$-module, then $G$ is accessible.\end{thm}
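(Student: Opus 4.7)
The plan is to prove accessibility by Noetherian induction on $d(H^1(G,\F_p[[G]]))$, with the inductive step using the splitting theorem (Theorem~\ref{splitting}) and a Mayer--Vietoris sequence over the standard pro-$p$ tree of the splitting. For the base case, if $d(H^1(G,\F_p[[G]]))=0$ then $H^1(G,\F_p[[G]])=0$ by Nakayama on the local ring $\F_p[[G]]$; by the contrapositive of Theorem~\ref{number of ends} together with Theorem~\ref{splitting}, $G$ admits no non-trivial splitting over a finite edge group, so every reduced decomposition of $G$ with finite edge groups is trivial and $G$ is accessible with $n(G)=0$.

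For the inductive step, assume the result for all finitely generated pro-$p$ groups $H$ with $d(H^1(H,\F_p[[H]]))<d(H^1(G,\F_p[[G]]))$. Using Theorem~\ref{splitting}, write $G=G_1\ast_C G_2$ or $\mathrm{HNN}(G_1,C,t)$ for some finite subgroup $C$, and apply $\mathrm{Ext}^*_{\F_p[[G]]}(-,\F_p[[G]])$ to the defining short exact sequence
\[
0\to \F_p[[G/C]]\to \F_p[[G/G_1]]\oplus\F_p[[G/G_2]]\to \F_p\to 0
\]
of left $\F_p[[G]]$-modules attached to the standard tree. By Shapiro's lemma together with the vanishing $H^i(C,\F_p[[G]])=0$ for $i\geq 1$ (since $C$ is a finite $p$-group, so $\F_p[[G]]$ is free as $\F_p[[C]]$-module), the long exact sequence collapses to
\[
0\to \F_p[[C\backslash G]]\xrightarrow{\iota} H^1(G,\F_p[[G]])\to H^1(G_1,\F_p[[G]])\oplus H^1(G_2,\F_p[[G]])\to 0,
\]
and a coset decomposition $\F_p[[G]]\cong\F_p[[G_i]]\,\hat\otimes_{\F_p}\,\F_p[[G_i\backslash G]]$ gives $d_{\F_p[[G]]}(H^1(G_i,\F_p[[G]]))=d_{\F_p[[G_i]]}(H^1(G_i,\F_p[[G_i]]))$.

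The heart of the argument is then the strict inequality
\[
d(H^1(G,\F_p[[G]]))\geq d(H^1(G_1,\F_p[[G_1]]))+d(H^1(G_2,\F_p[[G_2]]))+1,
\]
which combined with the inductive hypothesis applied to $G_1,G_2$ and the elementary decomposition bound $n(G)\leq n(G_1)+n(G_2)+1$ yields the linear bound $n(G)\leq d(H^1(G,\F_p[[G]]))$, hence accessibility. Tensoring the three-term exact sequence with $\F_p$ over $\F_p[[G]]$ reduces this inequality to showing that the class $[\iota(e_C)]$ of the norm element $e_C=\sum_{h\in C}h$ is non-zero in the Nakayama quotient $H^1(G,\F_p[[G]])/H^1(G,\F_p[[G]])I_G$. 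The main obstacle is precisely this non-vanishing: because $\F_p[[G]]$ is generally non-Noetherian, the inclusion $\iota$ need not induce an injection on Nakayama quotients, so one must control the correction coming from
\[
\mathrm{Tor}_1^{\F_p[[G]]}\bigl(H^1(G_1,\F_p[[G]])\oplus H^1(G_2,\F_p[[G]]),\,\F_p\bigr).
\]
Resolving this requires exploiting the cyclicity of $\F_p[[C\backslash G]]$ as a right $\F_p[[G]]$-module and the explicit form of the Mayer--Vietoris connecting homomorphism; an analogous analysis handles the HNN case.
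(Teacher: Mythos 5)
Your proposal has two genuine gaps, and the first is fatal. The strict inequality $d(H^1(G,\F_p[[G]]))\geq d(H^1(G_1,\F_p[[G_1]]))+d(H^1(G_2,\F_p[[G_2]]))+1$, which you yourself identify as the heart of the induction and reduce to the non-vanishing of $[\iota(e_C)]$ in the Nakayama quotient, is not only unproven but false. Take $G=\Z_p\amalg\Z_p$, a free pro-$p$ group of rank $2$, split over $C=1$. Dualizing the resolution $0\to I(G)\to\F_p[[G]]\to\F_p\to 0$ with $I(G)$ free of rank $2$ gives $H^1(G,\F_p[[G]])$ as the cokernel of the map $\F_p[[G]]\to\F_p[[G]]^2$, $s\mapsto((x-1)s,(y-1)s)$, whose image lies in $\F_p[[G]]^2\cdot I(G)$; hence $d(H^1(G,\F_p[[G]]))=2$. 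But $d(H^1(\Z_p,\F_p[[\Z_p]]))=1$ for each factor, so your inequality would demand $2\geq 3$. This is exactly the failure mode you flag: the generator of $\F_p[[C\backslash G]]$ lands in $H^1(G,\F_p[[G]])\cdot I_G$ and dies in the Nakayama quotient, so the induction need not decrease and does not terminate. The second gap is the claim that $n(G)\leq n(G_1)+n(G_2)+1$ is ``elementary.'' It is not: bounding an \emph{arbitrary} reduced decomposition of $G$ with finite edge groups by data attached to one particular splitting produced by Theorem~\ref{splitting} is precisely the hard content of accessibility, and in the pro-$p$ setting there is no subgroup/normal-form theorem that induces controlled decompositions of $G_1$ and $G_2$ from a given decomposition of $G$. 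Nothing in your argument addresses an arbitrary decomposition at all.

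The paper avoids both problems by working non-inductively: it takes an arbitrary reduced finite graph of pro-$p$ groups $(\G,\Gamma)$ with finite edge groups and fundamental group $G$, applies $Hom_{\F_p[[G]]}(-,\F_p[[G]])$ to the standard-tree sequence, and shows that every vertex $v$ with $\G(v)$ infinite and $H^1(\G(v),\F_p[[\G(v)]])\neq 0$ contributes a non-zero direct summand to a quotient module of $H^1(G,\F_p[[G]])$, so the number of such vertices is at most the number $M$ of module generators. The remaining (``bad'') vertices are controlled combinatorially: at most $3d(G)$ pending vertices (Lemma~\ref{bounding border}) and, via collapsing bad vertices and invoking Theorem~\ref{number of ends}, a maximal subtree of diameter at most $2M$. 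If you want to salvage your approach, you would need to replace the strict inequality by the paper's ``one generator per good vertex'' count applied directly to the full decomposition, rather than trying to propagate a numerical invariant through repeated one-edge splittings.
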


\begin{proof}  Suppose $G=\Pi_1(\G,\Gamma)$ is the fundamental group of
  a reduced finite graph $(\G,\Gamma)$ of pro-$p$ groups with finite
  edge groups. We will first do the case where if $v$ is any vertex of $\Gamma$,
  then $\G(v)$ is infinite and $H^1(\G(v),\ffi_p[[\G(v)]])\neq 0$. 
The group $G$ acts   on the standard pro-$p$ tree $T$ associated to
  $(\G,\Gamma)$, and we get  
 
 $$ 0\longrightarrow \oplus_{e\in E(\Gamma)} \F_p[[G/\G(e)]]\longrightarrow \oplus_{v\in V(\Gamma)} \F_p[[G/\G(v)]]\longrightarrow \F_p\longrightarrow 0$$
 
  Applying $Hom_{\F_p[[G]]}(-,\F_p[[G]])$ to  this exact sequence  and taking into account that $$Hom_{\F_p[[G]]}(\F_p,\F_p[[G]])=(\F_p[[G]])^G =0$$ (\cite[Lemma 3]{korenev}), we get
  
  $$0\rightarrow \oplus_{v\in V(\Gamma)} Hom_{\F_p[[G]]}(\F_p[[G/\G(v)]], \F_p[[G]])\rightarrow \oplus_{e\in E(\Gamma)}Hom_{\F_p[[G]]}(\F_p[[G/\G(e)]], \F_p[[G]])\rightarrow $$
  $$H^1(G,\F_p[[G]])\rightarrow \oplus_{v\in V(\Gamma)} Ext^1_{\F_p[[G]]}(\F_p[[G/\G(v)]], \F_p[[G]])\rightarrow \oplus_{e\in E(\Gamma)} Ext^1_{\F_p[[G]]}(\F_p[[G/\G(e)]], \F_p[[G]])$$
  
  By Shapiro's lemma $$Hom_{\F_p[[G]]}(\F_p[[G/\G(v)]], \F_p[[G]])=H^0(\G(v), Res_{\F_p[[\G(v)]]}\F_p[[G]])=(\F_p[[G]])^{\G(v)}=0,$$
 the latter equality since $\G(v)$ is infinite (\cite[Lemma
 3]{korenev}); similarly, by Shapiro's lemma,  $$Ext^1_{\F_p[[G]]}(\F_p[[G/\G(v)]], \F_p[[G]])=H^1(\G(v), Res_{\F_p[[\G(v)]]}\F_p[[G]]),$$ $$Hom_{\F_p[[G]]}(\F_p[[G/\G(e)]], \F_p[[G]])=H^0(\G(e), Res_{\F_p[[\G(e)]]}\F_p[[G]]),$$ $$Ext^1_{\F_p[[G]]}(\F_p[[G/\G(e)]], \F_p[[G]])=H^1(\G(e), Res_{\F_p[[\G(e)]]}\F_p[[G]]).$$ 
 
 Now since $\G(e)$ is finite, $G$ has a system of open normal subgroups
 $U$ intersecting $\G(e)$ trivially and so  
$$Res_{\F_p[[G_e]]}\F_p[[G]]=\varprojlim_U Res_{\F_p[[G_e]]}\F_p[[G/U]]=\varprojlim_U (\bigoplus_{\G(e)\backslash G/U} \F_p[\G(e)])= \prod_I \F_p[\G_e]$$ for some infinite set of indices $I$ (see \cite[Corollary 2.3]{MZ}). Moreover, since Hom commutes with projective limits in the second variable    we have  $$H^0(G(e), Res_{\F_p[[G_e]]}\F_p[[G]])= \varprojlim_U (\bigoplus_{\G(e)\backslash G/U} H^0(\G(e), \F_p[[\G(e)]])).$$ But  $H^0(\G(e), \F_p[[\G(e)]])\cong \F_p$.  Thus $$\varprojlim_U (\bigoplus_{\G(e)\backslash G/U} H^0(\G(e), \F_p[[\G(e)]]))=\varprojlim_U (\bigoplus_{\G(e)\backslash G/U} \F_p)=\varprojlim_U \F_p[[\G(e)\backslash G/U]])=\F_p[[\G_e\backslash G]]$$
 
 Note also that Ext commutes with direct products on the second variable  and  
  $ H^1(\G(e), \F_p[[\G(e)]])=0$, since a free $ \F_p[[\G(e)]]$-module is injective.
 So 
  $$H^1(G(e), Res_{\F_p[[G_e]]}\F_p[[G]])=H^1(G(e),  \prod_I \F_p[G_e])=  \prod_I H^1(\G(e), \F_p[[\G(e)]]))=0$$

  Thus the above long exact sequence can be rewritten as 
$$0\longrightarrow  \oplus_{e\in E(\Gamma)} \F_p[[G/\G(e)]]\longrightarrow H^1(G,\F_p[[G]])\longrightarrow \oplus_{v\in V(\Gamma)}H^1(G(v), Res_{\F_p[[\G(v)]]}\F_p[[G]])\longrightarrow 0$$

We show now that $H^1(\G(v), Res_{\F_p[[\G(v)]]}\F_p[[G]])\neq 0$ for each $v$. Indeed, $Res_{\F_p[[\G(v)]]}\F_p[[G]])$ is a free $\F_p[[\G(v)]]$-module (see \cite[Proposition 7.6.3]{W}) and so is projective by \cite[Proposition 7.6.2]{W}. Then since $\F_p[[G]]$ is a local ring by \cite[Proposition 7.5.1]{W} and \cite[Proposition 7.4.1 (b)]{W} $Res_{\F_p[[\G(v)]]}\F_p[[G]])=\prod_{i\in I} \F_p[[\G(v)]] $ is a direct product of copies of $\F_p[[\G(v)]]$. Since $Ext$ commutes with the direct product on the second variable we have $H^1(\G(v), Res_{\F_p[[\G(v)]]}\F_p[[G]])=\prod_{i\in I} H^1(\G(v), \F_p[[\G(v)]])$. But for every $v$  the groups $H^1(\G(v), \F_p[[\G(v)]])\neq 0$ by the assumption at the beginning of the proof. So $H^1(\G(v), Res_{\F_p[[\G(v)]]}\F_p[[G]])\neq 0$ for every $v$. 

Hence the number of vertices in $\Gamma$ cannot exceed  the minimal
number of generators of $\F_p[[G]]$-module $ H^1(G,\F_p[[G]])$. The
number of edges of $\Gamma$ cannot exceed $d(G)+ |V(\Gamma)|-1$ since
the rank of $\pi_1(\Gamma)=G/\tilde G$ equals
$|E(\Gamma)|-|V(\Gamma)|+1$, where the equality $\pi_1(\Gamma)=G/\tilde
G$ follows from  \cite[Corollary 3.9.3]{R 2017} combined with
\cite[Proposition 3.10.4 (b)]{R 2017}.\\

We will now do the general case. First observe that if
$G=\Pi_1(\G,\Gamma)$, where $(\G,\Gamma)$ is a reduced finite graph of
pro-$p$-groups with finite edge groups, then, letting $T$ be a maximal
subtree of $\Gamma$, there are at most $d:=d(G)$ edges in $\Gamma\setminus
T$, and therefore there are at most $3d$ pending vertices in $\Gamma$,
see Lemma \ref{bounding border}. We will now bound the
size of $T$. 

Suppose now that some vertex group $\G(v)$ is either finite or has $H^1(\G(v),\F_p[[G(v)]])=
0$. If $e\in E(T)$ is adjacent to $v$, with other extremity $w$, then
collapsing $\{e,v,w\}$ into a new vertex $y$, and putting on top of $y$
the group $\G(y)=\G(v)\amalg_{\G(e)}\G(w)$, by Theorem \ref{number of
    ends} we have $H^1(\G(y),\ffi_p[[\G(y)]])\neq 0$, and $\G(y)$ is infinite. Let 
$M$ be the number of generators of $H^1(G,\ffi_p[[G]])$.\\

{\bf Claim.} The diameter of $T$ is at most $2M$.

Indeed, if not, it contains a path with $2M+2$ distinct vertices. But
applying the above procedure to get rid of the bad vertices on the path, produces at
least $M+1$ vertices $y$ with  $H^1(\G(y),\ffi_p[[\G(y)]])\neq 0$ and 
$\G(y)$ infinite, which
contradicts the first part. \\

The result now follows, as there is a bound on the size of trees with
at most $3d$ pending vertices and diameter $\leq 2M$, and $|\Gamma\setminus T|\leq d$ (see Proposition \ref{modulo stabilizers}).

\end{proof}

\section{Howson's property}\vlabel{Howson property}

\begin{defn} We say that a pro-$p$ group $G$  {\em has Howson's
    property}, or {\em is Howson}, if whenever $H$ and $K$ are two
   finitely generated closed subgroups of $G$, then $H\cap K$ is
  finitely generated. 
\end{defn}
Free pro-$p$-groups are Howson, and the Howson
  property is preserved under free (pro-$p$) products, see \cite[Thm
  1.9]{SZ}. In this section we investigate the preservation of Howson's
  property under various (free) constructions.   
  \begin{lem}\vlabel{G-action} Let $G$ be a finitely generated pro-$p$
    group acting on a profinite space $Y$ such that the number of  maximal point stabilizers $G_a$, up to conjugation,
      is finite and represented by the elements $a\in A\subseteq Y$. Let $H$ be a subgroup of $G$ such that $H_y$ is finitely generated for each $y\in Y$ and the kernel of the natural homomorphism $\beta:\F_p[[H\backslash Y]])\longrightarrow  \F_p[[G\backslash Y]])$ is finite. Then the image of the natural homomorphism  $\eta: H_1(H,\F_p[[Y]])\longrightarrow H_1(H)$ is finite. \end{lem}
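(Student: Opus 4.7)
The plan is to establish a profinite Shapiro-type decomposition
\[
H_1(H,\F_p[[Y]])\;=\;\bigoplus_{y\in H\backslash Y}^{\wedge} H_1(H_y),
\]
under which $\eta$ becomes the closed sum of the inclusion-induced maps $H_1(H_y)\to H_1(H)$.  This will reduce the problem to showing that $\sum_{y\in H\backslash Y}\mathrm{Im}(H_1(H_y)\to H_1(H))$ is a finite subgroup of $H_1(H)=H/\Phi(H)$.

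Next, I would split the orbit space $G\backslash Y$ into an exceptional finite set $X_{\rm ex}$ of $G$-orbits $Gz$ with $|H\backslash Gz|\ge 2$, and its complement, the generic set $X_{\rm gen}$.  The hypothesis that $\ker\beta$ is finite forces $X_{\rm ex}$ to be finite and every fiber of $H\backslash Y\to G\backslash Y$ to be finite.  The exceptional contribution to $\mathrm{Im}(\eta)$ therefore involves only finitely many $H$-orbits; since each corresponding $H_y$ is finitely generated by hypothesis, its image in $H_1(H)$ is finite, and the total exceptional contribution is a finite subgroup of $H_1(H)$.

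The heart of the argument is the control of the generic contribution.  For $Gz\in X_{\rm gen}$, by the hypothesis on maximal stabilizers one has $G_z\subseteq gG_ag^{-1}$ for some $a\in A$ and $g\in G$; genericity $G=HG_z$ forces $HgG_ag^{-1}=G$, which (since $H$-transitivity on $G/G_a$ is a property of the $G$-action independent of base point) is equivalent to $HG_a=G$.  Writing $g=hg'$ with $h\in H$ and $g'\in G_a$, one computes $gG_ag^{-1}=hG_ah^{-1}$, because $g'\in G_a$ normalizes $G_a$; hence $H\cap gG_ag^{-1}=h(H\cap G_a)h^{-1}$ is $H$-conjugate to $H_a:=H\cap G_a$.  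Since conjugation by elements of $H$ acts trivially on the abelianization $H_1(H)$, the image of $H_1(H\cap gG_ag^{-1})\to H_1(H)$ coincides with $\mathrm{Im}(H_1(H_a)\to H_1(H))$; as $H_z\subseteq H\cap gG_ag^{-1}$, its image lies in the same subgroup.  Consequently the generic contribution is contained in
\[
\sum_{a\in A,\; HG_a=G}\mathrm{Im}\bigl(H_1(H_a)\to H_1(H)\bigr),
\]
a finite sum (indexed by the finite set $A$) of finite subgroups (each $H_a$ being finitely generated by hypothesis), hence finite.  Adding the exceptional and generic contributions completes the argument.

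The main technical obstacle will be the rigorous justification of the profinite Shapiro decomposition for a general profinite $G$-space $Y$: one must verify that $\F_p[[Y]]$ admits the indicated $H$-module decomposition in the profinite category (this is where the finiteness of the fibers of $H\backslash Y\to G\backslash Y$ is really used) and that $\eta$ identifies with the sum of the inclusion-induced maps under this identification.  The remaining steps are straightforward bookkeeping using the hypotheses on $\ker\beta$ and the finitely many maximal $G$-stabilizer classes.
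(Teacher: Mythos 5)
Your combinatorial core is correct and is essentially the paper's argument, just organized differently. The paper notes that every $G_y$ lies in some $G_a^g$ ($a\in A$), hence $H_y\leq H_{ga}$ with $ga$ in one of the finitely many $G$-orbits $Ga$, and that the set $B$ of $H$-orbits inside $\bigcup_{a\in A}Ga$ is finite because $\ker\beta$ is; each $H_b$ ($b\in B$) is finitely generated by hypothesis, so $\mathrm{Im}(\eta)$ sits inside a finite sum of finite subgroups of $H_1(H)$. Your split of $G\backslash Y$ into exceptional and generic $G$-orbits, with the generic contributions folded into the finitely many $\mathrm{Im}(H_1(H_a)\to H_1(H))$ via the computation $gG_ag^{-1}=hG_ah^{-1}$ and the triviality of $H$-conjugation on $H_1(H)=H/\Phi(H)$, is a correct variant of the same idea and gives the same conclusion.

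The gap is exactly where you flagged it. The opening decomposition $H_1(H,\F_p[[Y]])=\widehat{\bigoplus}_{y\in H\backslash Y}H_1(H_y)$ is not a meaningful statement when $H\backslash Y$ is an infinite profinite space: the index set is not discrete and the stabilizers vary over it, and the finiteness of the fibres of $H\backslash Y\to G\backslash Y$ does not help (those fibres being finite says nothing about $H\backslash Y$ being finite or discrete). Fortunately the full decomposition is not needed; what the paper uses, and all your argument requires, is the weaker statement that $\mathrm{Im}(\eta)$ is contained in the closed subgroup of $H_1(H)$ generated by the images of the $H_1(H_y)$, $y\in Y$. This follows by writing $Y=\varprojlim Y_i$ as an inverse limit of finite quotient $H$-spaces, applying Shapiro's lemma orbitwise to each $\F_p[[Y_i]]$ to get $\mathrm{Im}(\eta)\subseteq\sum_{\bar y\in H\backslash Y_i}\mathrm{Im}\bigl(H_1(H_{\bar y})\to H_1(H)\bigr)$ with $H_{\bar y}$ the open stabilizer of $\bar y\in Y_i$, and then passing to the limit by a compactness argument to replace the $H_{\bar y}$ by the actual stabilizers $H_y$ (this last step matters, since $H$ itself need not be finitely generated and so the $H_{\bar y}$ need not be). With your first paragraph replaced by this image identification, the rest of your proof goes through.
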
 
  
  \begin{proof}  We use the characterisation $H_1(G)=G/\Phi(G)$.   By Shapiro's lemma $H_{\red{1}}(G, \F_p[[G/G_y]])=H_1(G_y)=G_y/\Phi(G_y)$ and so the image of $\gamma: H_1(G, \F_p[[Y]])\longrightarrow H_1(G)$   coincides with the smallest closed
subgroup containing all images of the $H_1(G_y)$'s. Observe now that if $G_y\leq G_a$ and $g\in G$, then $G_y\Phi(G)\leq G_a
\Phi(G)=G_a^g \Phi(G)$, whence the image of $H_1(G_y)$ in $H_1(G)$ is contained in the
image of $H_1(G_a)$, which equals the image of $H_1(G_a^g)$. Thus the
image of $\gamma$ coincides with the subgroup of $H_1(G)$ generated by
the images of $H_1(G_a)$, $a\in A$.

A given $ H$-orbit
$H/H_y\subset Y$ is sent by $\beta$ to a subset of a
$G$-orbit $G/G_y$  in $Y$. If for $y\in Y$ the stabilizer $G_y$ is not
maximal, then there exists a maximal $G_a$, $a \in A$, and $g\in G$,
such that $G_y\leq G_a^g$. Hence $H_y\leq H_a^g$.    Since
$\ker(\beta)$ is finite, the set $B$ of $H$-orbits in $Y$  that
map into some $G$-orbit $Ga$ with $a\in A$,  is finite. Note that $H_1(
H,\ffi_p[[H/H_a]])=H_1(H_a)$ (by Shapiro's lemma). Thus the image of
$\eta: H_1( H,\F_p[[Y]])\longrightarrow H_1(H)$ coincides with the group
generated by the images of $H_1(H_b)$, $b\in B$.  But $H_a$ is finitely
generated, so each $H_1(H_a^g)$ is finite, and since $B$ is finite,  the
image of $\eta$ is also finite. 
  
  \end{proof}



\begin{thm}\vlabel{howson} Let $G$ be a finitely generated pro-$p$ group acting on a pro-$p$ tree $T$  with procyclic edge stabilizers such that $G\backslash T$ is finite.  Let $H$ be  a finitely generated subgroup of $G$ such that $H\backslash T(H)$ is finite, where $T(H)$ is a minimal $H$-invariant subtree of $T$ and $H_{\tilde e}\neq 1 $ for all $e\in E(T(H))$.  If $K$ is a finitely generated  subgroup of $G$ then  $H\cap K$ is finitely generated in each of the following cases:

\begin{enumerate} 
\item[(i)] $K$ intersects trivially all vertex stabilizers $H_v$, $v\in V(T(H))$;
\item[(ii)] the vertex stabilizers  $G_{ v}$ are Howson, $v\in V(T)$.
\end{enumerate} 

\end{thm}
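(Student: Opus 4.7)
The plan is to study the action of $L := H \cap K$ on the pro-$p$ tree $T(H)$, exploiting the graph-of-groups structure of $H$. Since $H\backslash T(H)$ is finite with all edge stabilizers nontrivial procyclic, Theorem \ref{General} applied to $H$ gives $H = \Pi_1(\mathcal{H}, H\backslash T(H))$ as the fundamental group of a finite reduced graph of pro-$p$ groups with procyclic edge groups, and by Proposition \ref{lem1} each vertex group $H_v$ is finitely generated. The subgroup $L$ then acts on $T(H)$ with vertex stabilizers $L_v = H_v\cap K$ and edge stabilizers $L_e = H_e \cap K$, which, being subgroups of the procyclic $H_e$, are themselves procyclic or trivial.

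For case (i), we have $L_v = 1$ for every $v\in V(T(H))$, and hence $L_e \le L_{d_0(e)} = 1$ as well. Thus $L$ acts with trivial stabilizers on the second-countable pro-$p$ tree $T(H)$, so by Theorem \ref{trivial stabilizers}, $L \simeq \pi_1(L\backslash T(H))$ is a free pro-$p$ group. To deduce finite generation, I would invoke Lemma \ref{G-action} with ambient group $H$ acting on $Y = V(T(H))\sqcup E(T(H))$ and subgroup $L$: the point stabilizers $L_y$ are trivial, and the crucial step is to verify that $\ker\beta$ is finite, equivalently that $|L\backslash H/H_v| < \infty$ for each representative vertex $v$. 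Combined with the exact sequence for the tree $T(H)$, the conclusion of Lemma \ref{G-action} gives that $H_1(L,\F_p)$ is finite, so $L$ is finitely generated. For case (ii), the vertex stabilizer $L_v = H_v \cap K$ can be written as $H_v \cap K_v$ inside the Howson group $G_v$, with $H_v$ finitely generated as above, and with $K_v = K \cap G_v$ finitely generated by applying Theorem \ref{cyclic General} to the action of $K$ on $T$; hence $L_v$ is finitely generated. Edge stabilizers are procyclic and thus finitely generated. Provided one can show that $L$ has only finitely many maximal vertex stabilizers up to $L$-conjugation (which should follow from the finiteness of $H\backslash T(H)$ combined with a Howson-type analysis inside each $G_v$), Theorem \ref{General} applied to $L$ yields $L = \Pi_1(\mathcal{L}, L\backslash D)$ as a finite graph of finitely generated pro-$p$ groups, hence $L$ is finitely generated.

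The main obstacle in both cases is converting the finite generation of $K$ into the precise finiteness needed to apply these structural results to $L$: in case (i), the finiteness of the double-coset fibers $L\backslash H/H_v$ required to invoke Lemma \ref{G-action}; in case (ii), the finiteness of the number of $L$-conjugacy classes of maximal vertex stabilizers. I expect both to be handled by exploiting the procyclicity of the edge stabilizers of $H$ to control how $L$-orbits split along geodesics of $T(H)$, together with the finite generation of $K$ to bound the resulting branching.
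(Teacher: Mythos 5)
Your outline correctly isolates what has to be proved, but it defers precisely the step that constitutes the actual proof. In both cases you reduce to a finiteness statement --- in (i) the finiteness of the fibres of the map $(K\cap H)\backslash T(H)\to K\backslash T$ (equivalently of the kernel of the induced map $\beta$ on vertex terms), in (ii) the finiteness of the number of maximal vertex stabilizers of $K\cap H$ up to conjugation --- and then say you ``expect'' this to follow from procyclicity of edge groups and finite generation of $K$ by controlling how orbits split along geodesics. That is not an argument, and it is exactly the technical heart of the theorem. The paper proves it by comparing two Mayer--Vietoris sequences: the one for $K$ acting on $T$ and the one for $K\cap H$ acting on $T(H)$, linked by vertical maps $\alpha$ (on edge terms) and $\beta$ (on vertex terms) with $\delta\circ\alpha=\beta\circ\sigma$. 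The decisive point is that $\alpha$ is \emph{injective}; since $\ker\delta$ has dimension at most $\dim H_1(K)=d(K)$ by the first sequence, this forces $\ker(\beta\circ\sigma)$ to be finite, hence both $\ker\sigma$ and $\ker\beta$ are finite. Case (i) then follows because the leftmost term of the second sequence vanishes (the coefficient module is free), so $H_1(H\cap K)$ embeds in the finite $\ker\sigma$; case (ii) feeds the finiteness of $\ker\beta$ into Lemma \ref{G-action}. Your proposal contains no substitute for this comparison.

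Moreover, the injectivity of $\alpha$ is not automatic: it requires $G_e=H_e$ for every $e\in E(T(H))$, which the paper arranges by first using Lemma \ref{embedding} to replace $G$ by an open subgroup $U\geq H$ such that $H\backslash T(H)\to U\backslash T$ is injective and the edge groups of the two graphs of groups coincide on $\Delta=H\backslash T(H)$. This replacement is possible precisely because each $H_e$ is a nontrivial, hence open, subgroup of the procyclic $G_e$ --- this is the only place the hypotheses ``procyclic edge stabilizers'' and ``$H_{\tilde e}\neq 1$'' are used. Your setup, which decomposes $H$ in isolation via Theorem \ref{General} and never aligns the edge groups of $H$ with those of the ambient group, discards this mechanism, and with it any visible route to the finiteness you need. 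The remaining ingredients of your sketch (Theorem \ref{trivial stabilizers} in case (i), finite generation of $H_v$ and $K_v$ via Proposition \ref{lem1} and Theorem \ref{cyclic General}, the Howson property giving $K\cap H_v$ finitely generated, Lemma \ref{G-action}) do match the paper, but they are the routine parts; as written the proof has a genuine gap at its center.
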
 

\begin{proof} The proof follows the idea of the proof of Theorem 1.9
  \cite{SZ}. Put $\Delta=H\backslash T(H)$. Observe that if $u\in
  T(H)$, then  $Hu$ is the
  intersection of all $Uu$ with $U$ an open subgroup of $G$ containing
  $H$. As $H\backslash T(H)$ is finite, there is  some open subgroup $U$ of $G$ containing
  $H$ and  such that we have an injection $H\backslash T(H)\to U\backslash
  T$.  By Lemma \ref{embedding}, passing to an open subgroup of  $G$
  containing $H$,  we
  may therefore assume that $G=\Pi_1(\G, \Gamma)$ with $\Gamma$ finite and that
  $(\H,\Delta)$ is a subgraph of groups of $(\G,\Gamma)$ such that $H=\Pi_1(\H,\Delta)$.
  Moreover, since $\Delta$ is finite, replacing $G$ by an  open subgroup we may assume that $\G(e)=\H(e)$ for every $e\in E(\Delta)$.

By \cite[Lemma 5.6.7]{RZ-10} there exist continuous sections
$\eta:K\backslash T\to T$ and $\kappa:(H\cap K)\backslash T(H)\to T(H)$, and by \cite[beginning of Section 9.4]{R 2017}  we have the following long exact sequences 

\begin{multline} H_1(K,\underset{{v\in K\backslash V(T)}}\oplus \F_p[[G/G_{\eta(v)}]])\to H_1(K,\ffi_p)\to \underset{{e\in K\backslash E(T)}}\oplus \F_p[[K\backslash G/G_{\eta(e)}]]
  \to \\
  \hfill \xrightarrow{\delta} 
  \underset{{v\in K\backslash V(T)}}\oplus \F_p[[K\backslash G/G_{\eta(v)}]]
   \to \F_p\to 0\end{multline}
and

\begin{multline}\vlabel{longexact for intersection}
H_1(K\cap H, \bigoplus_{v\in (K\cap H)\backslash V(T(H))} \ffi_p[[H/H_{\kappa(v)}]])\to H_1(H\cap
  K,\F_p){\longrightarrow} \\
  \hfill \bigoplus_{e\in (K\cap H)\backslash E(T(H))} \F_p[[K\cap H\backslash H/H_{\kappa(e)}]] \xrightarrow{\sigma}
\bigoplus_{v\in (K\cap H)\backslash V(T(H))} \F_p[[(K\cap
H)\backslash H/H_{\kappa(v)}]] \to \F_p\to 0. \end{multline}

We then have the following commutative diagramme:

$$
\begin{diagram}
  \node{ \bigoplus_{e\in K\backslash E(T)} \F_p[[K\backslash G/G_{\eta(e)}]]}\arrow{e,t}{\delta}\node{\bigoplus_{v\in K\backslash V(T)} \ffi_p[[K\backslash G/G_{\eta(v)}]]}\\
  \node{\bigoplus_{e\in (K\cap H)\backslash E(T(H))} \F_p[[K\cap H\backslash H/H_{\kappa(e)}]]}\arrow{e,t}{\si} \arrow{n,t}{\alpha}\node{\bigoplus_{v\in (K\cap H)\backslash V(T(H))} \ffi_p[[K\cap H\backslash H/H_{\kappa(v)}]]}\arrow{n,t}{\beta}  
  \end{diagram}$$
\noindent 

We want to show that $\ker \beta\circ \si$ is finite, or equivalently
that $\ker \delta\circ\alpha$ is finite. The dimension (as an
$\ffi_p$-v.s)  of $\ker \delta$ is $\leq \dim(H_1(K))$, i.e., less than
or equal to
the number of generators of $K$.  So, we need to show that
$\ker(\alpha)$ is finite, and if possible bound its size.
We know that there is an inclusion of $(K\cap H)\backslash H$ in
$K\backslash G$, and we need to see what happens when we quotient by the
action of $\G(e)$ (on the right).\\[0.05in]

The inclusion map $\ffi_p[[(K\cap H)\backslash H]]\to \ffi_p[[K\backslash
    G]]$ is a map of right $\ffi_p[[H_e]]$-modules for any $e\in E(T(H))$, and note that it sends
distinct $H_e$-orbits to distinct $H_e$-orbits (this is where we use that
$\G(e)=\H(e)$ for $e\in E(\Delta)$ and so $G_e=H_e$ for every $e\in E(T(H)) $. Hence  $\alpha$ is an injection!!\\[0.05in]
To summarise: $\delta\circ\alpha = \beta\circ\sigma$ have finite
 kernel, of dimension bounded by $d(K)$. Furthermore, as the
image of $\si$ in $\oplus_{v\in (H\cap K)\backslash V(T(H))} \ffi_p[[K\cap H\backslash
H/H_{\kappa(v)}]]$ has codimension $1$ (by the exact sequence \eqref{longexact for intersection}), it
follows that $\ker(\beta)$ is also finite, and we have
$\dim(\ker(\si))+\dim (\ker (\beta))\leq d(K)+1$.\\[0.05in]

(i) if $K$  intersects trivially all conjugates of $H_v$ then the
 left term  of (\ref{longexact for intersection}) is $$H_1(K\cap H,\bigoplus_{v\in (K\cap H)\backslash V(T(H))} \ffi_p[[H/H_{\kappa(v)}]])$$ and equals  $0$ because $\bigoplus_{v\in (K\cap H)\backslash V(T(H))} \ffi_p[[H/H_{\kappa(v)}]]$ is a free $K\cap H$-module, so (i) follows from the injectivity of $\alpha$.

\medskip
(ii) Since the $\G(v), v\in V(\Delta) $,  are Howson, and $K_v, H_v$ are
finitely generated by Theorem~\ref{cyclic General},  $K\cap H_v$ is
finitely generated for any $v\in V(T)$. By \ref{accessibility} and Corollary~\ref{restricting maximal stabilizers} the set of maximal vertex $K$-stabilizers is finite up to conjugation.  Thus, since $\ker(\beta)$ is finite, we can apply Lemma~\ref{G-action}  to $K\cap H\leq K$ to deduce that the image of  $H_1(K\cap H, \oplus_{v\in (K\cap H)\backslash V(T(H))}\ffi_p[[H/H_{\kappa(v)}]])$ in $H_1(H\cap K)$ is finite.

Combining this with the finiteness of $\ker(\sigma)$ we deduce that $H_1(H\cap K)$ is finite, i.e. $H\cap K$ is finitely generated.
\end{proof} 

\begin{rem}\label{acylindrically} If a finitely generated subgroup $H$
  of $G$ acts $n$-acylindrically on $T$
  and does not split as a free pro-$p$ product, then the hypotheses of
  Theorem \ref{howson} on $H$ are satisfied automatically by Corollary~\ref{virtually subgraph of groups}. In particular, this holds for limit pro-$p$ groups.

\end{rem}

\begin{cor} Let $G=G_1\amalg_C G_2$ be a free amalgamated pro-$p$ product of
  free or Demushkin not soluble pro-$p$ groups with $C$ maximal
  procyclic in $G_1$ or $G_2$. Let $H$ be a finitely generated subgroup of $G$ that does not split as a free pro-$p$ product. Then $H\cap K$ is finitely generated for any finitely generated subgroup $K$ of $G$.\end{cor}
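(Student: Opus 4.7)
The plan is to apply Theorem \ref{howson}(ii) to the action of $G$ on the standard pro-$p$ tree $T$ associated with the amalgamated splitting $G=G_1\amalg_C G_2$. First I set up the data: since $G_1$ and $G_2$ are finitely generated, so is $G$; the quotient $G\backslash T$ has exactly one edge and two vertices; edge stabilizers are conjugates of the procyclic group $C$; and vertex stabilizers are conjugates of $G_1$ or $G_2$. Free pro-$p$ groups are Howson (cited at the beginning of Section \ref{Howson property}), and non-soluble Demushkin pro-$p$ groups are likewise Howson, so every vertex stabilizer of $T$ is a Howson group.

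The main technical step is to show that the action of $G$ on $T$ is $n$-acylindrical for some $n$. Assume $C$ is maximal procyclic in $G_1$. In a free or non-soluble Demushkin pro-$p$ group the centralizer of any non-trivial element is procyclic, so two distinct maximal procyclic subgroups intersect trivially, and each is self-normalizing. Hence if a non-trivial element of $G$ fixed two distinct edges meeting at a vertex $v$ of type $G_1$, the two edge stabilizers would be distinct maximal procyclic subgroups of $G_v\cong G_1$, whose intersection is trivial, a contradiction. Since the standard tree is bipartite with vertex types alternating along edges, any path of length at least $3$ in $T$ has an interior vertex of type $G_1$, so no non-trivial element can fix such a path. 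The action is therefore $2$-acylindrical.

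The subgroup $H$ inherits this acylindrical action on $T$. Since by hypothesis $H$ does not split non-trivially as a free pro-$p$ product, Remark \ref{acylindrically}, which relies on Corollary \ref{virtually subgraph of groups}, ensures that the hypotheses on $H$ in Theorem \ref{howson} are verified: $H\backslash T(H)$ is finite and $H_{\tilde e}\neq 1$ for every $e\in E(T(H))$. As the vertex stabilizers of $G$ on $T$ are Howson, Theorem \ref{howson}(ii) applies and yields that $H\cap K$ is finitely generated. In the degenerate case where $H$ fixes a vertex $v$ of $T$, so that $T(H)=\{v\}$, one reduces to the Howson property of $G_v$ itself: applying Theorem \ref{howson}(ii) with $G_v$ in place of $H$ first gives that $K\cap G_v$ is finitely generated, after which the Howson property of $G_v$ produces $H\cap K=H\cap(K\cap G_v)$ finitely generated.

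The main obstacle is the acylindricity verification, which rests on the (essentially standard but delicate) fact that distinct maximal procyclic subgroups in free or non-soluble Demushkin pro-$p$ groups intersect trivially and are self-normalizing. A secondary but non-trivial point is the invocation of Howson's property for non-soluble Demushkin pro-$p$ groups, which should be justified from the literature.
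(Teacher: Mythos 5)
Your proposal is correct and follows essentially the same route as the paper: establish that the action on the standard pro-$p$ tree is $2$-acylindrical via malnormality of $C$ in the factor where it is maximal procyclic, and then conclude by Remark \ref{acylindrically} together with Theorem \ref{howson}(ii). The only (minor) difference is that the paper derives malnormality and self-normalization of $C$ in one stroke from the fact that every $2$-generated subgroup of a free or non-soluble Demushkin pro-$p$ group is free and hence has no non-trivial procyclic normal subgroup, whereas you argue via procyclic centralizers; your extra treatment of the case $T(H)=\{v\}$ is harmless but not needed, since the hypotheses of Theorem \ref{howson} are then vacuously satisfied.
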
  

\begin{proof} In this case the action of $G$ on its standard pro-$p$
  tree $T$ is 2-acylindrical. This follows from the fact that if $C$ is
  maximal procyclic in say $G_1$, then $C$ is malnormal in $G_1$,
  i.e. $C\cap C^{g_1}= 1$ for any $g_1\in G_1\setminus C$. Indeed, if
  $C\cap C^{g_1}\neq 1$ then $\langle C, C^{g_1}\rangle$ normalizes this
  intersection. But every 2-generated subgroup of $G_1$ is free and so
  can not have  procylic normal subgroups, so $C=C^{g_1}$ and so $g_1$
  normalizes $C$. But then the same applies to $\langle C, g_1\rangle$
  so it is procyclic contradicting the maximality of $C$ in $G_1$.

Thus by Remark \ref{acylindrically} we obtain the result.

\end{proof}

\bigskip
\begin{thm} \vlabel{R1} Let $G=G_1\amalg_CG_2$ be a free  pro-$p$ product with procyclic amalgamation.  Let $H_i\leq G_i$,  be finitely generated such that $C\cap H_1\cap H_2\neq 1$ and 
$K\leq G$  a finitely generated subgroup of $G$. Then $K\cap H$ is
  finitely generated in each of the following cases

\begin{enumerate} 
\item[(i)]  $K$  intersects all conjugates of $H_i$ trivially. Moreover, if $C\leq H_i$ ($i=1,2$) then $d(H\cap K)\leq d(K)$.

\item[(ii)]  
The  $G_i$'s are Howson pro-$p$.
 
 \end{enumerate} 
\end{thm}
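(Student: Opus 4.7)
The approach is to reduce both cases directly to Theorem~\ref{howson} applied to $H=\langle H_1,H_2\rangle$ acting on the standard pro-$p$ tree $T$ of $G=G_1\amalg_CG_2$. The tree $T$ has two vertex orbits, with representatives $v_1,v_2$ satisfying $G_{v_i}=G_i$, joined by an edge $e$ with $G_e=C$, so $G\backslash T$ is finite and edge stabilisers are procyclic, matching the ambient assumptions of Theorem~\ref{howson}. The case $H_1,H_2\leq C$ is degenerate ($H$ is then procyclic, so $K\cap H$ is automatically finitely generated), so I may assume $H$ does not fix a vertex of $T$ and let $T(H)$ be its unique minimal invariant subtree, provided by Proposition~\ref{minimal subtree}.

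The key geometric step is to identify $T(H)$ with the $H$-span of the geodesic $[v_1,v_2]=\{v_1,e,v_2\}$: the non-trivial subgroup $C\cap H_1\cap H_2\leq H_1\cap H_2$ fixes both $v_1,v_2$ and the edge $e$, forcing $v_1,v_2,e\in T(H)$. The resulting quotient $H\backslash T(H)$ has two vertices $Hv_1\neq Hv_2$ (distinct because $v_1,v_2$ lie in distinct $G$-orbits) and one edge $He$. By Lemma~\ref{embedding} this yields the Bass--Serre decomposition $H=\Pi_1(\H,H\backslash T(H))=H_1*_{H\cap C}H_2$, with $H\cap C\supseteq C\cap H_1\cap H_2$ non-trivial. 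Thus $H\backslash T(H)$ is finite, and every edge of $T(H)$ has non-trivial $H$-stabiliser, which verifies all the hypotheses Theorem~\ref{howson} imposes on the pair $(H,T)$.

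Case~(i) is then immediate: the vertex stabilisers of $H$ on $T(H)$ are precisely the $H$-conjugates of $H_1$ and $H_2$, so the assumption that $K$ meets every conjugate of each $H_i$ trivially is exactly condition~(i) of Theorem~\ref{howson}, yielding finite generation of $K\cap H$. For the refined bound $d(H\cap K)\leq d(K)$ assuming $C\leq H_1,H_2$, the extra hypothesis gives $H\cap C=C=G_e$, so the $H$- and $G$-edge stabilisers coincide on $T(H)\subseteq T$; this is exactly the condition that forces the map $\alpha$ in the proof of Theorem~\ref{howson} to be injective. Under~(i), the Mayer--Vietoris-type sequence for $K\cap H$ reduces to an embedding $H_1(K\cap H)\hookrightarrow \ker(\sigma)\subseteq \ker(\beta\circ\sigma)=\ker(\delta\circ\alpha)$, and injectivity of $\alpha$ together with $\dim\ker(\delta)\leq \dim H_1(K)=d(K)$ gives $d(H\cap K)\leq d(K)$.

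Case~(ii) follows in the same way: the vertex stabilisers of $G$ on $T$ are conjugates of $G_1,G_2$, which are Howson by hypothesis, and this is condition~(ii) of Theorem~\ref{howson}. The main obstacle I anticipate is the clean identification of the minimal $H$-invariant subtree $T(H)$ and the corresponding amalgamated decomposition $H=H_1*_{H\cap C}H_2$; the role of $C\cap H_1\cap H_2\neq 1$ is precisely to guarantee non-trivial edge stabilisers on $T(H)$, without which Theorem~\ref{howson} would not apply. Once this geometric setup is in place, everything else is a direct transcription of that theorem to the two-vertex, one-edge graph of groups.
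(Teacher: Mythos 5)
Your proposal is correct and follows essentially the same route as the paper: both reduce to Theorem \ref{howson} applied to $H=\langle H_1,H_2\rangle$ acting on the standard pro-$p$ tree of $G_1\amalg_C G_2$, and both obtain the refined bound $d(H\cap K)\leq d(K)$ from the injectivity of $\alpha$ (guaranteed by $C\leq H_1\cap H_2$ making $H\backslash T(H)\to G\backslash T$ an isomorphism, so no passage to a proper open subgroup is needed) together with $\dim\ker(\delta)\leq d(K)$ and the vanishing of the left-hand term of the long exact sequence under hypothesis (i). Your verification that $T(H)$ is spanned by the geodesic $[v_1,v_2]$ and that $H=H_1*_{H\cap C}H_2$ with non-trivial edge stabilisers is a more careful check of the hypotheses of Theorem \ref{howson} than the paper bothers to record, but it is the same argument.
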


\begin{proof}  The group $G$ acts on its standard pro-$p$ tree $T=T(G)$
  and so we can apply Theorem~\ref{howson}  to deduce (ii) and the first part of (i). Thus, we only need
to show the second part of (i).

 To obtain the precise bound $d(K)$, we need to show that  the
natural map 
 $H\backslash T(H)\to G\backslash T$ is an injection: but this follows
from our hypothesis $C\leq H_1\cap H_2$ since this map is in fact an isomorphism. 

As was observed in the proof of Theorem \ref{howson}, $\delta\circ\alpha = \beta\circ\sigma$ have finite
dimensional kernel, of dimension bounded by $d(K)$. 

If $K$ does not intersect the conjugates of $H_i$ then the left term of
equation \eqref{longexact for intersection} (in the proof of
\ref{howson}) is $0$, so  from the injectivity of $\alpha$ one deduces that the natural map $H_1(K\cap H)\longrightarrow H_1(K)$ is an injection.
\end{proof}

 \begin{thm}\vlabel{R2} Let $G={\rm HNN}(G_1,C,t)$ be a  pro-$p$ HNN
   extension, $G_1$ a finitely generated  and $C\neq 1$ procyclic. Let
   $H_1$ be a finitely generated subgroup of $G_1$ such that $H_1\cap
   C\neq 1$ and  $H=\langle H_1,t\rangle$. Then for a finitely generated
   subgroup  $K$  of $G$ the intersection $K\cap H$ is finitely
   generated in each of the following cases
 
 \begin{enumerate}
 \item[(i)]  $K$  intersects trivially  every conjugate of $H_1$. Moreover, if $C\leq H_1$  then $d(H\cap K)\leq d(K)$.
 
 \item[(ii)]
   $G_1$  satisfies 
 Howson's property.

\end{enumerate} 
 \end{thm}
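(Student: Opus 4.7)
The plan is to mirror the proof of Theorem~\ref{R1}, invoking Theorem~\ref{howson} after setting up the action of $G={\rm HNN}(G_1,C,t)$ on its standard pro-$p$ tree $T=T(G)$. The quotient $G\backslash T$ is the single vertex with one loop, hence finite; the vertex stabilizers are conjugates of $G_1$ and the edge stabilizers are conjugates of the procyclic group $C$. Thus the hypotheses that Theorem~\ref{howson} imposes on the ambient group are met.

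I would then verify that $H=\langle H_1,t\rangle$, together with a minimal $H$-invariant subtree $T(H)$ of $T$, satisfies the hypotheses required on the subgroup. Let $v\in V(T)$ be the vertex with $G_v=G_1$ and let $e$ be the edge with $d_0(e)=v$, $d_1(e)=tv$, $G_e=C$. Since $H_1\cap C\leq H_e$ and $H_1\cap C\neq 1$ by assumption, $e$ has nontrivial $H$-stabilizer; since every edge of $T$ is a $G$-translate of $e$, every edge of $T(H)$ is an $H$-translate of some edge with nontrivial $H$-stabilizer. The element $t\in H$ identifies $v$ with $tv$, while $H_1$ fixes $v$, so $H\backslash T(H)$ is a single vertex with a single loop, hence finite.

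With the hypotheses verified, part (ii) follows at once from Theorem~\ref{howson}(ii), since all vertex stabilizers of $G$ on $T$ are conjugates of the Howson group $G_1$. The first statement of (i) follows from Theorem~\ref{howson}(i), because if $K$ meets every conjugate of $H_1$ trivially then $K$ intersects every vertex stabilizer $H_w$ ($w\in V(T(H))$) trivially. For the bound $d(K\cap H)\leq d(K)$ when $C\leq H_1$, observe that $H_e=H_1\cap C=C=G_e$, so the natural inclusion of graphs of groups $(\H,H\backslash T(H))\hookrightarrow (\G,G\backslash T)$ has matching edge groups. Examining the proof of Theorem~\ref{howson}, the map $\alpha$ is then injective, and since $K$ misses all conjugates of $H_1$ the left term of the long exact sequence \eqref{longexact for intersection} vanishes; this yields an injection $H_1(K\cap H)\hookrightarrow H_1(K)$ and hence the bound.

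The main obstacle I anticipate is establishing cleanly that $V(T(H))$ coincides with the single orbit $Hv$ — equivalently that $H\backslash T(H)$ really is the one-vertex-one-loop graph — since in principle translates such as $th_1t^{-1}v$ with $h_1\in H_1$ could spawn additional vertex orbits. The hypothesis $H_1\cap C\neq 1$ together with the HNN normal form should control this, since adjacency in $T$ corresponds to multiplication by $t^{\pm 1}$ modulo an edge stabilizer, so every geodesic between $v$ and a translate $hv$ passes only through $H$-translates of $v$.
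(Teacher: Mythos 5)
Your proposal follows the paper's proof essentially verbatim: the paper likewise lets $G$ act on its standard pro-$p$ tree $T(G)$, invokes Theorem~\ref{howson} to get (ii) and the first claim of (i), and obtains the bound $d(H\cap K)\le d(K)$ from the fact that $H\backslash T(H)\to G\backslash T$ is an isomorphism when $C\le H_1$, together with the injectivity of $\alpha$ and the vanishing of the left term of \eqref{longexact for intersection}. The only difference is that you spell out the verification that $H\backslash T(H)$ is the one-vertex, one-loop graph with nontrivial edge stabilizers --- a check the paper leaves implicit by declaring the proof ``identical to the proof of Theorem~\ref{R1}'' --- and your sketch (every edge of $T(H)\subseteq H\cdot\{v,e\}$ is an $H$-translate of $e$, whose $H$-stabilizer contains $H_1\cap C\neq 1$) is the intended one.
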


\begin{proof} The proof is identical to the proof of Theorem \ref{R1}.
  The group $G$ acts on its standard pro-$p$ tree $T=T(G)$ and so we can
  apply Theorem \ref{howson} to deduce (ii) and the first part of (i).   Therefore we just need to show  the second part of (i).

To obtain the precise bound $d(K)$, we need to show that  the
natural map 
 $H\backslash T(H)\to G\backslash T$ is an injection: but this follows
from our hypothesis since this map is in fact an isomorphism when $C\leq
H_1$.

As was observed in the proof of Theorem \ref{howson}, $\delta\circ\alpha = \beta\circ\sigma$ have finite
dimensional kernel, of dimension bounded by $d(K)$. 

If $K$ does not intersect the conjugates of $H_i$ then the left term of
equation (\ref{longexact for intersection}) 
is $0$, so  from the injectivity of $\alpha$ one deduces that the natural map $H_1(K\cap H)\longrightarrow H_1(K)$ is an injection.

\end{proof}

\section{Normalizers}  \vlabel{Normalizers}

\begin{prop}\vlabel{normalizers} Let $C$ be a procyclic pro-$p$ group and $U\leq C$ a subgroup of $C$.
\begin{enumerate} 

\item[(a)] Let $G=G_1\amalg_CG_2$  and $N=N_G(U)$. Then $N=N_{G_1}(C)\amalg_C N_{G_2}(C)$.

\item[(b)] Let $G={\rm HNN}(G_1, C, t)$ be a proper pro-$p$ HNN-extension. 

(i) If there is some $g\in G_1$ such
that $U^g=U^t$, then $N_G(U)={\rm HNN}(N_{G_1}(U), C, t')$.

(ii) If $U$ and $U^t$ are not conjugate in $G_1$ then $N_G(U):= N:=N_1\amalg_CN_2$,
where $N_1=N_{G_1^{t\inv}}(U)$ and $N_2=N_{G_1}(U)$.

\end{enumerate}

\end{prop}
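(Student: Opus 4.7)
The plan is to apply the Bass--Serre machinery of pro-$p$ groups acting on trees to the action of $N=N_G(U)$ on the pointwise fixed subtree $T^U$ of the standard pro-$p$ tree $T=T(G)$ associated with the given decomposition of $G$. Since $U\leq C$ stabilizes the base edge $e$, $T^U$ is a nonempty pro-$p$ subtree of $T$, and $N$ acts on $T^U$: for $n\in N$, $u\in U$, $x\in T^U$, one has $u(nx)=n(n^{-1}un)x=nx$. The $N$-stabilizer of any $x\in T^U$ is $N\cap G_x$; in particular, since $C$ is procyclic (hence abelian), the $N$-stabilizer of $e$ equals $N\cap C=C$.

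The key observation is that every edge of $T^U$ lies in the $N$-orbit of $e$: if $gC\in T^U$ then $g^{-1}Ug\leq C$, and since a procyclic pro-$p$ group has a unique closed subgroup of each index, $g^{-1}Ug=U$, so $g\in N$ and $gC\in Ne$. By connectedness of $T^U$, any geodesic in $T^U$ from a base vertex consists of $N$-translates of $e$, so every vertex of $T^U$ lies in the $N$-orbit of a base vertex.

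For (a), the base vertices $v_1,v_2$ lie in distinct $G$-orbits and hence in distinct $N$-orbits, so $N\backslash T^U$ is a single edge between two distinct vertices; Lemma~\ref{embedding} then identifies $N$ with the amalgamated pro-$p$ product of the vertex stabilizers $N\cap G_i$ over $C$. For (b), the base edge joins $v_1$ to $tv_1$, both in $T^U$ since $t^{-1}Ut=U^t\leq\partial_1(C)\leq G_1$ gives $U\leq tG_1t^{-1}$. An element $n\in N$ with $nv_1=tv_1$ must have the form $n=tg$ with $g\in G_1$ satisfying $gUg^{-1}=U^t$. Under (i), the hypothesis $U^g=U^t$ supplies such a $g$, and $t':=tg^{-1}$ lies in $N$ and identifies $v_1$ with $tv_1$ in $N\backslash T^U$; the quotient is a single loop, producing the HNN-decomposition $N={\rm HNN}(N_{G_1}(U),C,t')$. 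Under (ii), no such $g$ exists, so $v_1$ and $tv_1$ lie in distinct $N$-orbits with stabilizers $N_{G_1}(U)$ and $N_{tG_1t^{-1}}(U)=N_{G_1^{t^{-1}}}(U)$; hence $N\backslash T^U$ is a single edge, yielding $N=N_{G_1^{t^{-1}}}(U)\amalg_{C}N_{G_1}(U)$.

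The main technical point is the ``unique subgroup of each index'' argument for procyclic pro-$p$ $C$ and its propagation from edge orbits to vertex orbits via the connectedness of $T^U$. Once the combinatorial structure of $N\backslash T^U$ is pinned down as a single edge, single loop, or single edge with two vertices in the three cases, the identification of $N$ with the claimed fundamental group via Lemma~\ref{embedding} is routine; verifying that the resulting graphs of groups are proper and that the amalgamating/associated subgroup is indeed $C$ uses only the abelianness of the procyclic group $C$.
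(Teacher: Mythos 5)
Your proposal is correct and follows essentially the same route as the paper: both act with $N$ on the fixed subtree $T^U$ of the standard tree, use the procyclic structure of $C$ (uniqueness of subgroups of a given index, equivalently the paper's "work in finite quotients" argument) to show every edge of $T^U$ is an $N$-translate of the base edge, and then read off the amalgam or HNN structure from the one-edge quotient graph via the structure theorem for groups acting on trees. The only cosmetic difference is that you invoke Lemma~\ref{embedding} where the paper cites \cite[Proposition 4.4]{MZ-90} (or \cite[Theorem 6.6.1]{R 2017}) for the final identification.
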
 

\begin{proof} 

The proof follows the proof of
 Proposition 2.5 in \cite{RZ-94} or Proposition 15.2.4 (b) \cite{R 2017} . Let $T$ be the standard pro-$p$ tree for $G$. 
By \cite[Theorem 3.7]{horizons} the subset $Y=T^U$   of
 $T$ consisting of points fixed by
 $U$ is a pro-$p$ subtree. Observe that if
 $g\in G$, then  $U$ fixes  $gC$ if and only if $U^g\leq C$.  \\
 
Then $N$ acts on $Y$ continuously. Indeed, if $g\in
N$, $y\in Y$ and $u\in U$, then $u g=gu'$ for some
$u'\in U$, and therefore $u gy=gu'y=gy$. This being true for all $u$ in $U$, we
get that $N$ acts on $Y$. \\

Consider the natural epimorphism $\varphi:T\to G\backslash T$. 
Then the natural map $\psi: Y \to
N\backslash Y$ is the restriction of $\varphi$ to $Y$.
 To see this pick 
$h\in G$  such that $hC\in E(Y)$; so $U\leq C^h$ and therefore $U,U^{h}\leq C$. As $C$ is procyclic, we get $U=U^h$, i.e., $h\in N$ (work in
finite quotients of $G$ where the equality is obvious). 
This shows that $\psi$  
coincides with the restriction of $\varphi$ to $Y$.

Thus $N\backslash E(Y)$ consists of one edge only, and therefore
$N\backslash Y$ has at most two vertices. According to
Proposition~4.4 in \cite{MZ-90} (or \cite[Theorem 6.6.1]{R 2017}),  we have $N=N_1\amalg_CN_2$, where
$N_1=N_{G_1}(U)$ and $N_2=N_{G_2}(U)$,  or $N={\rm HNN}(N_{G_1}(U),C,
t')$, depending on  whether $Y$ has two vertices or just one vertex. (Note that $N$
contains $C$). In Case (a)
$\varphi(gG_1)\neq \varphi(gG_2)$, so $\psi(Y)$ has two vertices. 

In Case (b) $N\backslash Y$ has one vertex only iff $d_1(C)=tG_1$ is in
the $N$-orbit of $d_0(C)=G_1$, i.e., if 
$G_1^t=G_1^n$ for some $n\in N$ iff $G_1=G_1^{nt\inv}$ iff $g=nt\inv\in G_1$, 
in which case $U^{g}=U^t$ as required.

\end{proof}

\begin{prop}\vlabel{normalizer of cyclic} Let $G$ be a pro-$p$ group acting on a pro-$p$ tree $T$ and  $U$ be a procyclic
     subgroup of $G$ that does not stabilize any edge. Then one of the following happens:
     \begin{enumerate}
      
      \item For some $g\in G$ and vertex $v$, $U\leq G_v$: then $N_G(U)=N_{G_v}(U)$.
\item For all $g\in G$ and vertex $v$, $U\cap G_v=\{1\}$. Then $N_G(U)/K$ is
    either  isomorphic to $\Z_p$ or to the  dihedral group $C_2\amalg C_2=\Z_2\rtimes C_2$, where $K$ is some normal subgroup of $N_G(U)$ contained in the stabilizer of an edge.
        
       \end{enumerate}
     
     \end{prop}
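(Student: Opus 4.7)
The plan is to dissect the action of $U$ on $T$ via its fixed-point subtree and, in the absence of fixed points, via its unique minimal invariant subtree, following the strategy used in Proposition~\ref{normalizers}.

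For case~(1), interpret the hypothesis as saying that $U$ is conjugate into some vertex stabiliser; after replacing $U$ by a conjugate we may assume $v\in T^U$. The fixed-point set $T^U$ is a pro-$p$ subtree of $T$ (as used in the proof of Proposition~\ref{normalizers}), and by the hypothesis that $U$ stabilises no edge it contains no edges, so reduces to the single vertex $v$. Any $g\in N_G(U)$ satisfies $gT^U=T^{gUg^{-1}}=T^U$, forcing $gv=v$; hence $N_G(U)\leq G_v$ and $N_G(U)=N_{G_v}(U)$.

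For case~(2) I first show that no non-trivial element of $U$ can fix a vertex. Finite $p$-groups fix a vertex by Theorem~\ref{finite group}, so the hypothesis forces $U\cong \Z_p$. If some $1\neq u\in U$ fixed a vertex, then $V:=\overline{\langle u\rangle}$ would be a nontrivial open (hence finite-index) subgroup of $U$ with non-empty fixed subtree $T^V$, and the finite $p$-group $U/V$ would then fix a vertex in $T^V$ by Theorem~\ref{finite group}, contradicting the assumption. Thus $U\cap G_v=\{1\}$ for every $v$. Proposition~\ref{minimal subtree} now supplies a unique minimal $U$-invariant subtree $D\subseteq T$, and for every $g\in N:=N_G(U)$ the translate $gD$ is again $U$-invariant, so by uniqueness $gD=D$ and $N$ acts on $D$. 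Let $K$ be the pointwise stabiliser of $D$ in $N$; then $K\trianglelefteq N$ and $K\leq G_e$ for any edge $e$ of $D$, while the quotient $\bar N:=N/K$ acts faithfully on $D$ with $U\hookrightarrow \bar N$ (since $U$ acts freely on $D$).

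It remains to pin down the isomorphism type of $\bar N$. Combining Theorem~\ref{trivial stabilizers} and Proposition~\ref{modulo stabilizers}, the free, minimal action of $U\cong \Z_p$ on $D$ identifies $D$ as a pro-$p$ ``line'' with $U$ acting by translations; consequently the quotient graph $\bar N\backslash D$ is either a single vertex with a single loop (yielding $\bar N=U=\Z_p$) or a path of length one (yielding a non-trivial amalgamated decomposition for $\bar N$). A case analysis on this Bass--Serre decomposition, using that $\bar N$ contains $U$ with index at most $2$ and is itself pro-$p$, forces $\bar N$ to be either $\Z_p$ or the amalgam $\Z_2\amalg_{2\Z_2}\Z_2=\Z_2\rtimes\Z_2$ (the latter possible only when $p=2$). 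The main obstacle is carrying out the pro-$p$ line identification rigorously and reading off the generalised dihedral presentation directly from the Bass--Serre decomposition of $\bar N$ acting on $D$; for this I expect to mimic the argument in the proof of Proposition~\ref{normalizers}, applying it to $\bar N$ on $D$ and using that the edge stabiliser in this quotient is absorbed into $K$.
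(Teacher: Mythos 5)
Your case (1) is fine and essentially matches the paper (the paper argues via the geodesic $[v,nv]$ being pointwise fixed by $U$, you argue via $T^U$ being an edgeless subtree, hence a single vertex preserved by $N_G(U)$ --- both work). Your preliminary observation that, when $U$ fixes no vertex, in fact $U\cap G_v=\{1\}$ for every $v$ (via $T^{\overline{\langle u\rangle}}$ and Theorem \ref{finite group} applied to the finite quotient $U/\overline{\langle u\rangle}$) is a genuinely useful supplement: the paper does not spell out why its two cases match the dichotomy in the statement. The setup of case (2) --- uniqueness of $D$, $gD=D$ for $g\in N$, $K$ the kernel of the action of $N$ on $D$, $K\leq G_e$, $U\hookrightarrow N/K$ --- is also correct and identical to the paper's.

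The gap is precisely where you say it is: the determination of $\bar N=N/K$. Your plan rests on two unproved claims, namely that $\bar N\backslash D$ is a single loop or a single edge, and that $U$ has index at most $2$ in $\bar N$. The second claim is essentially the conclusion of the proposition, not an input; nothing in your argument rules out that $\bar N$ has large vertex stabilizers for its action on $D$ (you only know $U\cap G_v=1$, not $N\cap G_v\leq K$), so the ``pro-$p$ line'' picture and the finiteness of $\bar N\backslash D$ cannot be asserted before the structure of $\bar N$ is known. Your proposed fallback --- mimicking Proposition \ref{normalizers} --- does not apply either: that proof analyzes $T^U$ when $U$ stabilizes an edge, whereas here $U$ acts freely and $T^U=\emptyset$ on $D$. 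The paper closes this gap by a different mechanism: since $N/K$ acts faithfully and irreducibly on $D$, Lemma 4.2.6(c) of \cite{R 2017} shows that $C_N(U)K/K$ is free pro-$p$; having the non-trivial procyclic central subgroup $UK/K$, it must be procyclic; and then $N/C_N(U)\hookrightarrow \mathrm{Aut}(U)\cong \Z_p\times C_{p-1}$ (resp.\ $\Z_2\times C_2$) forces $N/K$ to be $\Z_p$ or $\Z_2\amalg_{2\Z_2}\Z_2$. Some such input controlling centralizers or vertex stabilizers in $\bar N$ is indispensable, and your sketch does not supply it.
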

     
      \begin{proof} Let $N=N_G(U)$ and let $D$ be a minimal $U$-invariant subtree of $T$ (that exists by Proposition \ref{minimal subtree}).

Case 1. $|D|=1$, i.e., $U$ stabilizes a vertex $v$. 
If $v\neq nv$ for some $n\in N$, then by Corollary 3.8 in \cite{horizons},
$U$ stabilizes all edges in $[v,nv]$,  contradicting our hypothesis. So $N_G(U)$ fixes $v$ and  we have (1). 

 \smallskip 
   
   Case 2. $D$ is not a vertex. Then $U$ acts irreducibly on $D$ and so
   by  Proposition \ref{minimal subtree} it is unique. Note that if
   $n\in N$, then $nD$ is also $D$-invariant, and therefore must equal
   $D$. 
   Hence $N$ acts irreducibly on $D$ and by Lemma 4.2.6 (c) in \cite{R
     2017} $C_N(U)K/K$ is free pro-$p$, where $K$ is the kernel of the
   action (and is the intersection of all stabilizers). Hence $C_N(U)K/K$ is procyclic (because $UK/K$ is procyclic,
   $\neq 1$) and so $N/K$ is either $\Z_p$ or $C_2\amalg C_2$, since
   $Aut(U)\cong \Z_p\times C_{p-1}$ for $p>2$ or $ \Z_2\times C_{2}$ for
   $p=2$.  
   \end{proof}

Combining Theorem \ref{R1} and Propositions \ref{normalizers} and \ref{normalizer of cyclic} we deduce the following 

 \begin{thm}\vlabel{normalizer-3} Let $C$ be a procyclic pro-$p$ group and $G=G_1\amalg_CG_2$ be a free amalgamated pro-$p$ product or a pro-$p$ HNN-extension $G={\rm HNN}(G_1,C,t)$ of Howson groups. Let $U$ be a procyclic
     subgroup of $G$   and $N=N_G(U)$.  Assume that $N_{G_i}(U^g)$ is finitely
   generated whenever $U^g\leq G_i$. If $K\leq
   G$ is finitely generated, then so is $K\cap N$. 
 \end{thm}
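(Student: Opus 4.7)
The strategy is a case analysis on the action of the procyclic subgroup $U$ on the standard pro-$p$ tree $T = T(G)$, combined with the Howson-type results of the previous section. We may assume $U\neq 1$, as otherwise $N = G$ and $K\cap N = K$ is finitely generated by hypothesis.

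Suppose first that $U$ stabilises some edge of $T$. Since edge stabilisers of $T$ are conjugate to $C$, after conjugating $U$ (and $K$ correspondingly, which does not affect finite generation of the intersection) we may assume $U\leq C$. Proposition \ref{normalizers} then realises $N$ as the fundamental pro-$p$ group of a one-edge graph of pro-$p$ groups with cyclic amalgamation $C$, whose vertex groups are (conjugates of) normalisers of the form $N_{G_i}(U^g)$ with $U^g\leq G_i$, hence finitely generated by hypothesis; moreover each vertex group contains $C$, since $C$ is abelian and $U\leq C$. Thus $N$ is itself finitely generated and its minimal invariant subtree $T(N)\subseteq T$ realises precisely this graph of groups, so $N\backslash T(N)$ is finite and every $N$-edge stabiliser on $T(N)$ contains a conjugate of $C$ and is in particular non-trivial. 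As the vertex stabilisers of $T$ in $G$ are conjugate to $G_1$ or $G_2$ (both Howson by hypothesis), applying Theorem \ref{howson}(ii) with $H = N$ yields that $K\cap N$ is finitely generated; equivalently, one can appeal directly to Theorem \ref{R1}(ii) in the amalgamated subcases (with $H_i = N_{G_i}(U)$, observing that $C\leq H_1\cap H_2$) and to Theorem \ref{R2}(ii) in the HNN subcase.

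If instead $U$ fixes no edge but stabilises some vertex $v$, Proposition \ref{normalizer of cyclic}(1) gives $N = N_{G_v}(U)$; since $G_v$ is conjugate to some $G_i$ the hypothesis provides that $N$ is finitely generated, and Theorem \ref{howson}(ii) applied with $H = N$ and $T(N) = \{v\}$ (so that the edge hypothesis is vacuous) yields that $K\cap N$ is finitely generated. Finally, if $U$ fixes no vertex at all, Proposition \ref{normalizer of cyclic}(2) shows that $N/K_0$ is virtually procyclic (either $\Z_p$ or the generalised pro-$2$ dihedral group), with $K_0\triangleleft N$ contained in an edge stabiliser of $T$ and hence procyclic, so $N$ itself is virtually procyclic. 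Every closed subgroup of a virtually procyclic pro-$p$ group is virtually procyclic and thus finitely generated, so $K\cap N$ is finitely generated without invoking any further hypothesis.

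The main obstacle lies in the first case, where one must verify that the graph-of-groups description of $N$ given by Proposition \ref{normalizers} coincides with the decomposition arising from the $N$-action on the minimal invariant subtree $T(N)\subseteq T(G)$; in particular one needs $N\backslash T(N)$ to be finite and the edge stabilisers of $T(N)$ under $N$ to be non-trivial, so that the hypotheses of Theorem \ref{howson}(ii) are met.
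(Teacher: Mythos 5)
Your proposal is correct and follows essentially the same route as the paper: reduce via the action on the standard tree $T(G)$ to the trichotomy of Propositions \ref{normalizers} and \ref{normalizer of cyclic}, handle the edge-stabilising case with Theorems \ref{R1}/\ref{R2} (equivalently \ref{howson}(ii)) applied to $N=N_{G_1}(U)\amalg_C N_{G_2}(U)$ resp.\ the HNN analogue, the vertex case via the finite generation hypothesis on $N_{G_i}(U^g)$ together with the Howson property of the vertex groups, and the remaining case by observing that $N$ is procyclic-by-(procyclic or dihedral) so all its closed subgroups are finitely generated. Your write-up in fact supplies several details the paper leaves implicit (the reduction $U\neq 1$, the verification that $C\leq N_{G_i}(U)$ and that the edge stabilisers of $T(N)$ contain $U\neq 1$), but the argument is the same.
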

 
 \begin{proof} Let $T$ be the standard pro-$p$ tree for $G$. If $U$
   does not stabilize any edge then by Proposition \ref{normalizer of
     cyclic} either $N_G(U)=N_{G_i^g}(U)$ for some $i$ and $g$, and so $K\cap N=K\cap N_{G_i^g}(U)$ is finitely generated,  or $N_G(U)$ is metacyclic and therefore so is $K\cap N$. 
 
 If $U$ stabilizes an edge then $U\leq C^g$ for some $g\in G$ and so we
 may assume that $U\leq C$. Then by Proposition \ref{normalizers}  $N$
 satisfies the hypothesis of either Theorem \ref{R1} or Theorem \ref{R2} and so by one of these theorems $N\cap K$ is finitely generated.
 \end{proof}

\end{document}